\newtheorem{theorem}{Theorem}[section]
\newtheorem*{mt}{Main Theorem}
\newtheorem{lemma}[theorem]{Lemma}
\newtheorem{prop}[theorem]{Proposition}
\newtheorem{cor}[theorem]{Corollary}
\theoremstyle{definition}
\newtheorem{remark}[theorem]{Remark}
\newtheorem{example}[theorem]{Example}
\numberwithin{equation}{section}
\DeclareMathOperator*{\GL}{GL}
\DeclareMathOperator*{\SL}{SL}
\DeclareMathOperator*{\Cl}{Cl}
\DeclareMathOperator*{\Pic}{Pic}
\DeclareMathOperator*{\Div}{Div}
\DeclareMathOperator*{\Sym}{Sym}
\DeclareMathOperator*{\cone}{cone}
\DeclareMathOperator*{\fan}{fan}
\DeclareMathOperator*{\Spec}{Spec}
\DeclareMathOperator*{\Hom}{Hom}
\DeclareMathOperator*{\trop}{trop}
\DeclareMathOperator*{\Supp}{Supp}
\DeclareMathOperator*{\relint}{relint}
\newcommand{\ord}{{\rm ord}}
\newcommand{\rleft}{\mathopen{}\mathclose\bgroup\left}
\newcommand{\rright}{\aftergroup\egroup\right}
\newcommand{\A}{\mathbb{A}}
\newcommand{\C}{\mathbb{C}}
\newcommand{\Q}{\mathbb{Q}}
\newcommand{\N}{\mathbb{N}}
\newcommand{\V}{\mathbb{V}}
\newcommand{\T}{\mathbb{T}}
\newcommand{\Z}{\mathbb{Z}}
\newcommand{\I}{\mathbb{I}}
\newcommand{\X}{\mathfrak{X}}
\newcommand{\Pb}{\mathbb{P}}
\newcommand{\Om}{\mathcal{O}}
\newcommand{\Vm}{\mathcal{V}}
\newcommand{\Fm}{\mathcal{F}}
\newcommand{\Dm}{\mathcal{D}}
\newcommand{\Tm}{\mathcal{T}}
\newcommand{\Rm}{\mathcal{R}}
\newcommand{\Nm}{\mathcal{N}}
\newcommand{\Mm}{\mathcal{M}}
\newcommand{\Nf}{\mathfrak{N}}
\newcommand{\Lf}{\mathfrak{L}}
\newcommand{\orho}{\rho}
\newcommand{\p}{\mathfrak{p}}
\newcommand{\hi}{V}
\newcommand{\un}{T}
\newcommand{\glob}[1]{\Gamma( #1, \Om_{#1} )}
\newcommand{\arb}[1]{\bm{ #1 }}
\begin{document}

\title{The Cox ring of a spherical embedding}

\author{Giuliano Gagliardi}
\address{Mathematisches Institut, Universit\"at T\"ubingen, Auf der
Morgenstelle 10, 72076 T\"ubingen, Germany}
\curraddr{}
\email{giuliano.gagliardi@uni-tuebingen.de}
\thanks{}

\date{}

\dedicatory{}

\begin{abstract}
Let $G$ be a connected reductive group and $G/H$ a 
spherical homogeneous space.
We show that the ideal of relations between a natural set of
generators of the Cox ring of a $G$-embedding of $G/H$
can be obtained by homogenizing certain
equations which depend only on the homogeneous space.
Using this result, we describe some examples of spherical homogeneous
spaces such that the Cox ring of any of their $G$-embeddings
is defined by one equation.
\end{abstract}

\maketitle

\section*{Introduction}
Throughout the paper, we work with algebraic varieties and
algebraic groups over the field of complex numbers $\C$.

Let $Y$ be a normal irreducible variety whose divisor
class group $\Cl(Y)$ is finitely generated and
$\Gamma(Y, \Om_Y^*) = \C^*$. The \em Cox ring \em of $Y$
is the $\Cl(Y)$-graded $\C$-algebra 
\begin{align*}
\Rm(Y) \coloneqq \bigoplus_{[D] \in \Cl(Y)} \Gamma(Y, \Om_Y(D))\text{,}
\end{align*}
with the multiplication defined by the canonical maps
\begin{align*}
\Gamma(Y, \Om_Y(D_1)) \otimes \Gamma(Y, \Om_Y(D_2)) \to \Gamma(Y, \Om_Y(D_1+D_2))\text{.}
\end{align*}
Some accuracy is required in order for this
multiplication to be well-defined
(cf.~\cite{ha} or \cite{coxrings} for details).

It has been shown by Cox that $\Rm(Y)$ is a polynomial
ring if $Y$ is a toric variety (cf.~\cite{tcox}). The
converse was obtained by Hu and Keel for smooth
projective varieties (cf.~\cite[Corollary~2.10]{huke}).
Toric varieties can be considered
as examples in the more general class of spherical
varieties, which are quasihomogeneous with respect
to the action of a connected reductive group $G$.

The aim of this paper is to investigate the Cox ring of an arbitrary
spherical variety.

We recall some standard notions from the theory of
spherical varieties. 
Let $G$ be a connected reductive group.
A closed subgroup $H \subseteq G$ is called
\em spherical \em if a Borel subgroup $B \subseteq G$
has an open orbit in $G/H$. Then $G/H$
is called a \em spherical homogeneous space\em.
In this case, we may always assume that $BH$ is open in $G$.
A $G$-equivariant open
embedding $G/H \hookrightarrow Y$ into a normal irreducible $G$-variety $Y$
is called a \em spherical embedding\em, and $Y$ is called a \em spherical variety\em.
Let $p: G' \to G$ be a finite covering
such that $G'$ is of simply connected 
type, i.e.~$G' = G^{ss} \times C$ where $G^{ss}$ is semisimple simply
connected and $C$ is a torus. Then $G/H \cong G'/p^{-1}(H)$,
and these two spherical homogeneous spaces have exactly the same embeddings.
Therefore in this paper we will always assume $G = G^{ss} \times C$.
Similarly to the theory of toric varieties, spherical embeddings
$G/H \hookrightarrow Y$ can be described by some combinatorial
data introduced by Luna and Vust (cf.~\cite{lunavust} and \cite{knopsph}).

We denote by $\Mm$ the weight lattice of $B$-eigenvectors in the function
field $\C(G/H)$ and by $\Nm \coloneqq \Hom(\Mm, \Z)$ the dual lattice
with the natural pairing $\langle \cdot , \cdot \rangle: \Nm \times \Mm \to \Z$.
If $\nu: \C(G/H) \to \Q$ is a discrete valuation, its
restriction to $B$-eigenvectors induces a map $u: \Mm \to \Q$,
which lies in the vector space $\Nm_\Q \coloneqq \Nm \otimes_\Z \Q$.
We denote the set of $G$-invariant discrete valuations on $\C(G/H)$ by $\Vm$.
The above assignment is injective on $\Vm$ and therefore defines an inclusion
$\Vm \subseteq \Nm_\Q$. The set $\Vm$ is a polyhedral convex cone called the \em valuation
cone \em of $G/H$. In fact, the cone $\Vm$ is the intersection of half-spaces
$\{u \in \Nm_\Q : \langle u, \gamma_i \rangle \le 0\}$ where
$\gamma_1, \ldots, \gamma_s$ are linearly independent primitive elements in the
lattice $\Mm$ called the \em spherical roots \em
of $G/H$.

A spherical embedding $G/H \hookrightarrow Y$ is called a \em wonderful
completion \em if $Y$ is complete, smooth, and contains exactly one closed $G$-orbit.
Then $Y$ is called a \em wonderful variety\em.
A wonderful completion of $G/H$ exists (and is unique) if and only if the valuation cone $\Vm$ is spanned by a
basis of $\Nm$. 
To any spherical homogeneous space $G/H$ one can associate in a natural way
a closed subgroup $H' \times C \subseteq N_G(H)$ of finite index and containing $H$
such that a wonderful completion $G/(H' \times C) \cong G^{ss}/H' \hookrightarrow Y'$ exists (cf.~\cite{Luna:typea}).

Let $G/H \hookrightarrow Y$
be a spherical embedding.
As the Cox ring does not depend on $G$-orbits of codimension
two or greater, we may assume that $Y$ contains only non-open
$G$-orbits of codimension one, which are exactly the
$G$-invariant prime divisors $Y_1, \ldots, Y_n$ in $Y$. To each $Y_l$
we assign the primitive element $u_l \in \Nm$ corresponding
to the discrete valuation on $\C(G/H)$ induced by $Y_l$.
It follows from the Luna-Vust theory that the embedding $G/H \hookrightarrow Y$
can be described combinatorially by the fan $\Sigma$ in $\Nm_\Q$ 
consisting exactly of the trivial cone $0$ and the one-dimensional cones $\sigma_l \coloneqq \Q_{\ge 0}u_l$,
which lie in the valuation cone $\Vm$.

The Cox ring of $Y$
has been described by Brion (cf.~\cite{brcox}).
In fact, he describes the equivariant Cox ring $\Rm_G(Y) \cong \Rm(Y) \otimes_\C \C[C]$.
As a first step, he computes the Cox ring $\Rm(Y')$ of the associated wonderful completion $G^{ss}/H' \hookrightarrow Y'$
as the Rees algebra of a certain filtration of the ring $\glob{G^{ss}/K'}$ where $K'$ is
the intersection of the kernels of all characters of $H'$.
The equivariant Cox ring $\Rm_G(Y)$ is then obtained from $\Rm(Y')$ by a base change.
There is a natural bijection between the set $\{\gamma_1, \ldots, \gamma_s\}$
of spherical roots of $G^{ss}/H'$ and the set $\{Y'_1, \ldots, Y'_s\}$ of
$G$-invariant prime divisors in $Y'$.
It immediately follows from the result of Brion that
\begin{align*}
\Rm(Y) \cong \Rm(Y') \otimes_{\C[Z_1, \ldots, Z_s]} \C[W_1, \ldots, W_n]
\end{align*}
where $\C[Z_1, \ldots, Z_s]$ and $\C[W_1, \ldots, W_n]$ are polynomial rings.
The homomorphism $\C[Z_1, \ldots, Z_s] \to \Rm(Y')$ sends
$Z_i$ to the canonical section in $\Gamma(Y', \Om_{Y'}(Y'_i))$,
and the homomorphism $\C[Z_1, \ldots, Z_s] \to \C[W_1, \ldots, W_n]$
sends
\begin{align*}
Z_i \mapsto \prod_{l=1}^n W_l^{-\langle u_l, \gamma_i \rangle}\text{.}
\end{align*}

Our main result is another description of the Cox ring $\Rm(Y)$.
We will reduce the general case
to the case where the spherical homogeneous space $G/H$
has trivial divisor class group and is therefore quasiaffine.
We show that the Cox ring $\Rm(Y)$ is obtained by homogenizing the
equations of the affine closure of $G/H$ inside an appropriately constructed 
embedding $G/H \hookrightarrow \C^d \times (\C^*)^m$.
As a byproduct,
we obtain a description of the valuation cone $\Vm$
in terms of tropical algebraic geometry.
We will compare our results with the approach
of Brion in Section~\ref{five}.

We now give a detailed summary of our results. 
We assume that the homogeneous space $G/H$ has
trivial divisor class group.
Let $\Dm \coloneqq \{D_1, \ldots, D_r\}$ be the set of $B$-invariant
prime divisors in $G/H$, and assume that the spherical embedding $G/H \hookrightarrow Y$
satisfies $\Gamma(Y, \Om_Y^*) = \C^*$.
We choose prime elements $f_1, \ldots, f_r \in \Gamma(G/H, \Om_{G/H})$
with $\V(f_i) = D_i$ and obtain irreducible $G$-modules $V_i \coloneqq \langle G \cdot f_i
\rangle \subseteq \Gamma(G/H, \Om_{G/H})$.
For each $i$ we set $s_i \coloneqq \dim V_i$ and choose a basis $\{f_{ij}\}_{j=1}^{s_i} \subseteq
G \cdot f_i$ of $V_i$ with $f_{i1} = f_i$.
We let $m$ be the rank of the finitely generated free abelian group
$\Gamma(G/H, \Om^*_{G/H})/\C^* \cong \X(G)^H \cong \X(C)^{p_2(H)}$ where $p_2: G \to C$ denotes the projection
and choose representatives $\{g_k\}_{k=1}^m \subseteq \Gamma(G/H, \Om^*_{G/H})$ of a basis.
The $B$-weights of $f_1, \ldots, f_r$ and $g_1, \ldots, g_m$ freely generate lattices $\Mm_\hi$ and $\Mm_\un$ respectively, and
we obtain $\Mm = \Mm_\hi \oplus \Mm_\un \cong \Z^{r+m}$.
Finally, we define the torus $T \coloneqq C/p_2(H) \cong \Spec(\C[\Mm_\un])$.

The next step is to define a $G$-equivariant locally closed embedding
\begin{align*}
G/H \hookrightarrow Z \coloneqq V^* \times T \cong \C^{s_1+\ldots+s_r} \times (\C^*)^m
\end{align*}
where $V^*$ is dual to $V \coloneqq V_1 \oplus \ldots \oplus V_r$.
We have $\C[Z] = S(V) \otimes_\C \C[\Mm_\un]$, where
$S(V)$ denotes the symmetric algebra of $V$.
The coordinate ring of $V_i^*$ is the symmetric algebra $S(V_i)$, whose
generators corresponding to the above basis we denote by $S_{ij}$ for $1 \le j \le s_i$,
i.e.~$\C[V_i^*] = S(V_i) = \C[S_{i1}, \ldots, S_{is_i}]$.
We denote the generators of the coordinate ring of $T$ corresponding to the
above basis of $\Mm_\un$ by $T_k$ for $1 \le k \le m$, i.e.~$\C[T] = \C[\Mm_\un] = \C[T_1^{\pm 1}, \ldots,
T_m^{\pm 1}]$.
The locally closed embedding $G/H \hookrightarrow Z$ is then given
by the $G$-equivariant surjective map $\C[Z] \to \Gamma(G/H, \Om_{G/H})$
sending
$S_{ij} \mapsto f_{ij}$ and
$T_k \mapsto g_k$.
Its kernel is the prime ideal $\I({G/H})$.
Considering the natural action of the torus $\Spec(\C[\Mm])$ on $Z$, we obtain
a corresponding $\Mm$-grading on the coordinate ring $\C[Z]$. For $f \in \C[Z]$ and
$\mu \in \Mm$ we denote the $\mu$-homogeneous
component of $f$ by $f^{(\mu)}$.

In order to describe the relations of $\Rm(Y)$, we define a homogenization operation
in two steps. The first step is the map $\alpha: \C[Z] \to \big(\C[Z]\big)[W_1, \ldots, W_n]$
defined as follows.
For each $f \in \C[Z]$ and $u \in \Nm$ we define
\begin{align*}
\ord_u(f) \coloneqq \min_{\mu\in\Mm} \rleft\{\rleft\langle u, \mu \rright\rangle; f^{(\mu)} \ne 0\rright\}\text{,}
\end{align*}
and set
\begin{align*}
f^\alpha \coloneqq \frac{\sum_{\mu \in \Mm} \rleft(f^{(\mu)} \prod_{l=1}^n W_l^{\rleft\langle u_l, \mu \rright\rangle}\rright)}
{\prod_{l=1}^n W_l^{\ord_{u_l}(f)}}\text{.}
\end{align*}
The second step is the map $\beta: \big(\C[Z]\big)[W_1, \ldots, W_n]
\to S(V)[W_1, \ldots, W_n]$ sending
$T_k \mapsto 1$ for each $1 \le k \le m$.
Finally, we define the map $h: \C[Z] \to S(V)[W_1, \ldots, W_n]$
by composing the two steps, i.e.~$h \coloneqq \beta \circ \alpha$.
Note that we write the application of the maps $\alpha$, $\beta$, and $h$
as exponents, for example we write $f^h$ instead of $h(f)$.
We can now state our main result.

\begin{mt}
We have
\begin{align*}
\Rm(Y) \cong S(V)[W_1, \ldots, W_n]\rleft/\middle(f^h; f \in \I(G/H)\rright)\text{,}
\end{align*}
with $\Cl(Y)$-grading given by $\deg(S_{ij}) = [D_i]$ and $\deg(W_l) = [Y_l]$.
If $H$ is connected, $\Rm(Y)$ is a factorial ring.
\end{mt}

In particular, if $\I(G/H) = (f)$ is a principal ideal generated
by $f$, then the ideal of relations of $\Rm(Y)$ is generated by $f^h$.

In the special case of a toric variety $Y$, the Main Theorem reduces to the result of
Cox that $\Rm(Y)$ is a polynomial ring with one variable per
$G$-invariant prime divisor because the homogeneous space
has no $B$-invariant prime divisors,
i.e.~$V = \langle 0\rangle$, and $\I(G/H) = (0)$.
In the special case of a horospherical variety $Y$, we will see that
$\Rm(Y) \cong \Rm(G/P)[W_1, \ldots, W_n]$,
where $P \coloneqq N_G(H)$,
i.e.~the Cox ring of $Y$ is a polynomial
ring over the Cox ring of $G/P$. This also follows
directly from the description of the Cox ring by Brion.

The paper is organized in five sections. 
In Section~\ref{one} we present some 
information about the valuation cone $\Vm$ and
describe it using tropical algebraic geometry.
The proof of the Main Theorem in the crucial case of a
spherical homogeneous space with trivial divisor class group 
is given in Section~\ref{two}.
Then we explain in Section~\ref{three} how the results can be extended
to arbitrary spherical homogeneous spaces.
In Section~\ref{four}, we illustrate our results by some explicit examples.
Finally, we compare our results with
the approach of Brion in Section~\ref{five}.

\section{The valuation cone $\Vm$}
\label{one}

We continue to use the notation and the assumptions from the introduction in
this section.

It was shown in \cite{brionsym} and \cite{knopsym} that
the valuation cone $\Vm$ is a fundamental chamber for the
action of a crystallographic reflection group $W_{G/H}$
on $\Nm_\Q$ called the \em little Weyl group \em (cf.~\cite[Theorem~22.13]{ti}).
In particular, the cone $\Vm$ is cosimplicial,
and the set of spherical roots is the
set of simple roots of a root system
with Weyl group $W_{G/H}$.
We have the isotypic decomposition into $G$-modules
\begin{align*}
\glob{G/H} = \bigoplus_{\mu} V_{\mu}\text{,}
\end{align*}
where $\mu$ runs over pairwise distinct elements of $\Mm$
and $V_{\mu}$ is an irreducible $G$-module of highest weight $\mu$.
Let $\Tm$ be the cone in $\Mm \otimes_\Z \Q$
generated by the elements of the form $\mu_1 + \mu_2 - \mu_3$ such that
\begin{align*}
V_{\mu_3} \subseteq V_{\mu_1}\cdot V_{\mu_2}\text{.}
\end{align*}
It follows from \cite[Proposition~2.13]{vbmod} that the cone $\Tm$
is polyhedral. The cone $\Tm$ is equal to the cone
generated by the spherical roots,
i.e.~$-\Vm$ is the dual cone of $\Tm$ (cf.~\cite[Section~3]{losev} and \cite[Section~5]{knopsph}).

In this sense, the valuation cone $\Vm$ is related to the failure of
the $\C$-algebra $\glob{G/H}$ being $\Mm$-graded. 
We have the following extreme case: a spherical homogeneous space
$G/H$ is called \em horospherical \em if $\Vm = \Nm_\Q$. In our setting,
this is equivalent to the $\C$-algebra $\glob{G/H}$ being $\Mm$-graded.
Another characterization of horospherical homogeneous spaces is
that $H$ contains a maximal unipotent subgroup of $G$ (cf.~\cite{pauer2} and \cite[Corollary 6.2]{knopsph}).

We will state a result about the valuation cone which
uses tropical algebraic geometry. An introduction to this subject
can be found in \cite{tropintro}. Let $X$ be the toric variety
associated to a fan $\Sigma_X$ in $N_\Q$ with torus $\T$. If $S \subset \T$ is a closed subset,
the tropicalization $\trop(S) \subseteq N_\Q$ of $S$ is the support of
a polyhedral fan in $N_\Q$. It gives an answer to the question
of which $\T$-orbits in $X$ intersect the closure $\overline{S}$
of $S$ inside $X$:
by a result of Tevelev (cf.~\cite{tev}), the $\T$-orbit
corresponding to $\sigma \in \Sigma_X$ intersects $\overline{S}$ if
and only if $\trop(S)$ intersects $\relint(\sigma)$.

Our plan is to use this machinery on the embedding $G/H \hookrightarrow Z$.
First, we show that this is indeed a locally closed embedding.
Along the way, we also prove some other claims which have been stated in the introduction.

\begin{remark}
By \cite[Proposition 1.3]{knoppic}, 
every $f_i$ is a $B$-eigenvector,
all elements of $\Gamma(G/H, \Om^*_{G/H})$ are $G$-eigenvectors, and the
quotient $\Gamma(G/H, \Om^*_{G/H})/\C^*$ is a finitely generated free abelian group.
Moreover,
for each $f_i$ the $G$-module spanned by $G \cdot f_i$ in $\glob{G/H}$
is irreducible and finite-dimensional (cf.~\cite[III.1.5]{kraftinv}).
\end{remark}

\begin{prop}
\label{prop-b-eigenvectors}
The $B$-eigenvectors in $\glob{G/H}$ are given by
\begin{align*}
\glob{G/H}^{(B)} = \rleft\{cf_1^{d_1}\cdots f_r^{d_r}; c\in \Gamma(G/H, \Om^*_{G/H}), d_i \in \N_0 \rright\}\text{.}
\end{align*}
\end{prop}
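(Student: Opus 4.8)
The plan is to prove the two inclusions separately. The inclusion ``$\supseteq$'' is essentially formal: since $G/H$ has trivial divisor class group, every $B$-invariant prime divisor $D_i$ is principal, and we chose $f_i$ with $\V(f_i) = D_i$; the units $c \in \Gamma(G/H, \Om^*_{G/H})$ are $B$-eigenvectors (indeed $G$-eigenvectors) by the preceding Remark, and a product of $B$-eigenvectors is a $B$-eigenvector, so every element of the stated form lies in $\glob{G/H}^{(B)}$. The real content is ``$\subseteq$''. So let $f \in \glob{G/H}^{(B)}$ be a $B$-eigenvector. Since $G/H$ is quasiaffine (its divisor class group is trivial, hence in particular $\Pic$ is trivial, and a spherical homogeneous space with trivial Picard group is quasiaffine), we may work inside $\Gamma(G/H, \Om_{G/H})$, which is a factorial ring when $H$ is connected — but in general we should not assume factoriality, so I will argue via divisors instead.

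The key step is to control the divisor of $f$. Because $f$ is a $B$-eigenvector, the principal divisor $\operatorname{div}(f)$ on $G/H$ is $B$-invariant, hence supported on the $B$-invariant prime divisors; since $G/H$ is homogeneous, $f$ has no zeros or poles outside the finitely many $B$-invariant prime divisors $D_1, \ldots, D_r$ (every prime divisor on $G/H$ is $B$-invariant because the open $B$-orbit has complement of pure codimension one consisting exactly of the $D_i$). Thus $\operatorname{div}(f) = \sum_i d_i D_i$ for some $d_i \in \Z$. Moreover $d_i \ge 0$: indeed $f \in \Gamma(G/H, \Om_{G/H})$ is a regular function, so $\operatorname{div}(f)$ is effective, forcing $d_i \ge 0$. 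Now set $f' \coloneqq f / (f_1^{d_1} \cdots f_r^{d_r})$. Then $\operatorname{div}(f') = 0$ on $G/H$, so $f'$ is a global unit, i.e.~$f' \in \Gamma(G/H, \Om^*_{G/H})$, and writing $c \coloneqq f'$ gives $f = c\, f_1^{d_1} \cdots f_r^{d_r}$ with $c \in \Gamma(G/H, \Om^*_{G/H})$ and $d_i \in \N_0$, as desired.

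The main obstacle is the claim that every prime divisor on $G/H$ is $B$-invariant, equivalently that the $B$-invariant prime divisors $D_1, \ldots, D_r$ together with the open $B$-orbit exhaust $G/H$ set-theoretically; this is where sphericity enters essentially. It follows because $B$ has a dense orbit $\Om \subseteq G/H$ and its complement $G/H \setminus \Om$, being $B$-stable and closed, is a union of $B$-orbits of strictly smaller dimension; the ones of codimension one are exactly $D_1, \ldots, D_r$, and a prime divisor whose generic point does not lie in any $D_i$ would have to meet $\Om$, hence equal $\overline{\Om} = G/H$, a contradiction. Once this is in hand, the divisor-theoretic argument above is routine, using only that $\Cl(G/H) = 0$ (so that $\operatorname{div}(f)$ effective and supported on the $D_i$ forces the quotient to be a unit) and that the $D_i$ are Cartier, which again holds since the class group vanishes.
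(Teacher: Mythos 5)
Your overall plan — write $\operatorname{div}(f) = \sum_i d_i D_i$, observe $d_i \ge 0$ because $f$ is regular, then divide by $\prod f_i^{d_i}$ to obtain a global unit — is exactly the right elaboration of the paper's one-line proof, and the ``$\supseteq$'' direction is handled correctly. But the claim on which you hang the key step, ``every prime divisor on $G/H$ is $B$-invariant,'' is false in general, and your alleged equivalence with $D_1 \cup \cdots \cup D_r \cup \Omega = G/H$ does not hold. A prime divisor whose generic point lies in the open $B$-orbit $\Omega$ does \emph{not} have to equal $\overline{\Omega} = G/H$; it simply intersects $\Omega$, and there are plenty of such divisors (they just are not $B$-invariant). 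So the contradiction at the end of your last paragraph does not materialize, and the claim fails, for instance, already on affine space.

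What you actually need is the weaker, correct statement that every irreducible component of $\V(f)$ is $B$-invariant when $f$ is a $B$-eigenvector, and this is precisely what the paper's proof supplies: $\V(f)$ is a $B$-stable closed set, and since $B$ is connected (irreducible) its action on the finite set of irreducible components of $\V(f)$ is trivial. Alternatively, and closer in spirit to the route you were attempting, one can argue: since $\V(f)$ is $B$-invariant and $f \neq 0$, $\V(f)$ is disjoint from the open $B$-orbit $\Omega$; by Krull's principal ideal theorem on the normal variety $G/H$, $\V(f)$ is pure of codimension one; hence each of its irreducible components is a codimension-one irreducible subset of $G/H \setminus \Omega$ and therefore one of the $D_i$. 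Either observation closes the gap, after which the rest of your divisor computation goes through as you wrote it.
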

\begin{proof}
For each $f \in \glob{G/H}^{(B)}$ all irreducible components of $\V(f)$ are $B$-invariant since
$B$ is irreducible. 
\end{proof}
\begin{prop}
\label{generators}
The $\C$-algebra $\glob{G/H}$ is generated by $\{f_{ij}, g_k^{\pm 1}\}$.
\end{prop}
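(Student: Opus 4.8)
The plan is to show that the subalgebra $A \subseteq \glob{G/H}$ generated by $\{f_{ij}, g_k^{\pm 1}\}$ is all of $\glob{G/H}$ by testing it against the multiplicity-free decomposition of $\glob{G/H}$ into $B$-eigenspaces. Since $G/H$ is spherical, $\glob{G/H}$ is a direct sum of irreducible $G$-modules, each appearing with multiplicity at most one, and each irreducible summand is spanned by the $G$-orbit of its (unique up to scalar) $B$-highest-weight vector. So it suffices to produce, for every $B$-eigenvector $f \in \glob{G/H}^{(B)}$, a $G$-submodule of $A$ that contains $f$; then applying $G$ (which stabilizes $A$, as $A$ is generated by a $G$-stable set together with $G$-eigenvectors) shows the whole isotypic component lies in $A$.

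First I would invoke Proposition~\ref{prop-b-eigenvectors}: every $B$-eigenvector in $\glob{G/H}$ has the form $c f_1^{d_1}\cdots f_r^{d_r}$ with $c \in \Gamma(G/H,\Om^*_{G/H})$ and $d_i \in \N_0$. The unit $c$ is a product of the chosen basis representatives $g_k$ and their inverses times a constant (by the remark, $\Gamma(G/H,\Om^*_{G/H})/\C^*$ is free with basis the classes of the $g_k$), hence $c \in A$. Also $f_i = f_{i1} \in A$ by construction, so every $B$-eigenvector of $\glob{G/H}$ lies in $A$. Now take an arbitrary irreducible $G$-submodule $V_\mu \subseteq \glob{G/H}$; its $B$-highest-weight vector $f$ is a $B$-eigenvector, hence $f \in A$, and since $A$ is $G$-stable the entire module $V_\mu = \langle G \cdot f\rangle$ lies in $A$. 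As $\glob{G/H}$ is the sum of its irreducible $G$-submodules, we conclude $A = \glob{G/H}$.

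The only point requiring a little care is the $G$-stability of $A$. The generators $g_k^{\pm 1}$ are $G$-eigenvectors (by the remark), so $G$ maps each $g_k$ into $\C^* g_k \subseteq A$; and the generators $f_{ij}$ were chosen inside $G \cdot f_i$, so $G \cdot f_{ij} \subseteq G \cdot f_i \subseteq \langle G\cdot f_i\rangle = V_i$, and $V_i$ is spanned by the $f_{ij}$ and hence contained in $A$. Therefore $G$ maps the generating set of $A$ into $A$, so $G \cdot A \subseteq A$. This is the one step where one must not simply wave hands, but it is immediate from the choices made in the introduction. Everything else is the standard multiplicity-free/highest-weight argument, and no serious obstacle arises.
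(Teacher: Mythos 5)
Your proof is correct and follows essentially the same strategy as the paper's: the paper observes that every element of $\glob{G/H}^U$ is a sum of $B$-eigenvectors, so by Proposition~\ref{prop-b-eigenvectors} the $U$-invariants are generated by $\{f_i, g_k^{\pm 1}\}$, and then cites a theorem of Kraft (\cite[III.3.1]{kraftinv}) to pass from generators of $\glob{G/H}^U$ to generators of the whole algebra. Your argument simply inlines the proof of that cited result, verifying directly that the subalgebra $A$ is $G$-stable and contains every $B$-highest-weight vector, hence every irreducible $G$-submodule.
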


\begin{proof}
Since every element of $\glob{G/H}^U$ can be written as the sum of $B$-eigenvectors,
the $\C$-algebra $\glob{G/H}^U$ is generated by $\{f_i,g_k^{\pm 1}\}$. By
\cite[III.3.1]{kraftinv}, it follows that $\glob{G/H}$ is generated as claimed.
\end{proof}

For each $f \in \C(G/H)^{(B)}$ we denote by $\chi_f$ its $B$-weight.
For the weight lattice $\Mm$ of $B$-eigenvectors in $\C(G/H)$ we then have
\begin{align*}
\Mm &= \{\chi \in \X(B); \text{there exists } f\in \C(G/H)^{(B)} \text{ with } \chi=\chi_f\}\text{.}
\end{align*}
Recall that there is an exact sequence (cf.~\cite[after 1.7]{knopsph})
\begin{align*}
1 \to \C^* \to \C(G/H)^{(B)} \to \Mm \to 0\text{.}
\end{align*}
We define $v_i^* \coloneqq \chi_{f_i}$ and $w_k^* \coloneqq \chi_{g_k}$.

\begin{prop}
\label{prop:m-basis}
The lattice $\Mm \subseteq \X(B)$ is freely generated by
\begin{align*}
\{v_1^*, \ldots, v_r^*, w_1^*, \ldots, w_m^*\}\text{.}
\end{align*}
\end{prop}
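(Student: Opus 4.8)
The plan is to show that the stated set is a generating set of $\Mm$ and that it is linearly independent over $\Z$; together these give that it is a free basis. Linear independence is the part that has essentially been assembled already in the introduction: the $B$-weights $v_i^* = \chi_{f_i}$ span a sublattice $\Mm_\hi$, the $B$-weights $w_k^* = \chi_{g_k}$ span a sublattice $\Mm_\un$, and one has $\Mm = \Mm_\hi \oplus \Mm_\un$. More precisely, $\{w_1^*, \ldots, w_m^*\}$ is by construction the image of a basis of $\X(G)^H \cong \Gamma(G/H, \Om^*_{G/H})/\C^*$ under the (injective, by the exact sequence $1 \to \C^* \to \C(G/H)^{(B)} \to \Mm \to 0$) weight map, so these $m$ elements are linearly independent and generate a saturated sublattice $\Mm_\un$; and the quotient $\Mm/\Mm_\un$ is the free abelian group generated by the classes of the $v_i^*$, which are linearly independent because the $f_i$ are the prime elements cutting out the pairwise distinct $B$-invariant prime divisors $D_i$, so their weights are distinguished by the order-of-vanishing functionals $\ord_{D_i}$.

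The main work is therefore to prove that $\{v_1^*, \ldots, v_r^*, w_1^*, \ldots, w_m^*\}$ \emph{generates} $\Mm$. Let $\chi \in \Mm$ and pick $f \in \C(G/H)^{(B)}$ with $\chi_f = \chi$. I would first reduce to the case $f \in \glob{G/H}^{(B)}$: writing $f = a/b$ with $a, b \in \glob{G/H}$, one may clear denominators and, after decomposing into $B$-eigenvectors and discarding all but one component (which does not change the weight), assume $a, b \in \glob{G/H}^{(B)}$; then $\chi = \chi_a - \chi_b$, so it suffices to treat $\glob{G/H}^{(B)}$. Now Proposition~\ref{prop-b-eigenvectors} applies: any $f \in \glob{G/H}^{(B)}$ has the form $f = c\, f_1^{d_1} \cdots f_r^{d_r}$ with $c \in \Gamma(G/H, \Om^*_{G/H})$ and $d_i \in \N_0$, whence
\begin{align*}
\chi = \chi_f = \chi_c + \sum_{i=1}^r d_i\, v_i^*\text{.}
\end{align*}
It remains to see that $\chi_c$ lies in the lattice generated by $w_1^*, \ldots, w_m^*$. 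But $c \in \Gamma(G/H, \Om^*_{G/H})$, and by the Remark this group is a $G$-eigenvector group whose quotient by $\C^*$ is free with basis represented by $g_1, \ldots, g_m$; hence $c = c_0\, g_1^{e_1} \cdots g_m^{e_m}$ for some $c_0 \in \C^*$ and $e_k \in \Z$, giving $\chi_c = \sum_{k=1}^m e_k\, w_k^*$. This exhibits $\chi$ as an integral combination of the claimed generators.

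Assembling: every $\chi \in \Mm$ is an integral linear combination of $\{v_i^*, w_k^*\}$, so this set generates $\Mm$; and it is linearly independent by the direct-sum decomposition $\Mm = \Mm_\hi \oplus \Mm_\un$ recalled above, the $v_i^*$ being independent modulo $\Mm_\un$ (separated by the valuations $\ord_{D_i}$) and the $w_k^*$ being independent (coming from a basis of the free group $\X(G)^H$). Therefore $\{v_1^*, \ldots, v_r^*, w_1^*, \ldots, w_m^*\}$ is a free basis of $\Mm$, so in particular $\Mm \cong \Z^{r+m}$. I expect the only point requiring genuine care to be the reduction to $\glob{G/H}^{(B)}$ and the bookkeeping showing that the Laurent exponents $e_k$ of the unit $c$ are exactly the coordinates in $\Mm_\un$ — everything else is a direct application of Proposition~\ref{prop-b-eigenvectors}, the Remark, and the exactness of $1 \to \C^* \to \C(G/H)^{(B)} \to \Mm \to 0$.
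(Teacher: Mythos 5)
The generation argument follows the same overall route as the paper (reduce to $\glob{G/H}^{(B)}$, then apply Proposition~\ref{prop-b-eigenvectors}, the exact sequence, and the basis of $\Gamma(G/H, \Om^*_{G/H})/\C^*$), but your reduction step does not work as stated. You write $f = a/b$ with $a, b \in \glob{G/H}$ and claim that ``after decomposing into $B$-eigenvectors and discarding all but one component'' one may assume $a, b \in \glob{G/H}^{(B)}$. An arbitrary element of a rational $B$-module is \emph{not} a sum of $B$-eigenvectors: $B$-eigenvectors are precisely the $U$-fixed weight vectors, and these span only $V^U$, not $V$ (for instance, in the standard $\SL(2)$-module only the highest-weight line consists of $B$-eigenvectors). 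What Lie--Kolchin gives is \emph{existence} of a $B$-eigenvector, not a decomposition. A correct fix along your lines: the nonzero ideal $\{h \in \glob{G/H} : fh \in \glob{G/H}\}$ is a $B$-submodule of $\glob{G/H}$, hence contains a $B$-eigenvector $b_0$; since $f$ is a $B$-eigenvector, $a_0 \coloneqq fb_0$ is one too, so $f = a_0/b_0$. The paper bypasses all of this and is cleaner: since $f$ is a $B$-eigenvector, $\operatorname{div}(f)$ is a $B$-invariant Weil divisor and $G/H$ has no $G$-invariant prime divisors, so $\operatorname{div}(f)$ is a $\Z$-combination of the $D_i$, and one immediately writes $f=g/h$ with $g,h$ monomials in the $f_i$ times a unit.

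Separately, your linear-independence paragraph opens by invoking the direct-sum decomposition $\Mm = \Mm_\hi \oplus \Mm_\un$ --- but that decomposition \emph{is} the content of Proposition~\ref{prop:m-basis}, so you cannot assume it. The argument you then give is fine and is the intended one: the $w_k^*$ are linearly independent because the weight map is injective modulo $\C^*$ (the exact sequence) and the $g_k$ are a basis of the free group $\Gamma(G/H, \Om^*_{G/H})/\C^*$; and applying the functionals $\ord_{D_j} \in \Nm$, which take value $\delta_{ij}$ on $v_i^*$ and vanish on every $w_k^*$, kills the coefficients of the $v_i^*$. Just remove the circular framing and keep only that concrete argument.
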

\begin{proof}
Every $f \in \C(G/H)^{(B)}$ can be written as $\frac{g}{h}$ with
$g, h \in \glob{G/H}^{(B)}$ since $\Supp\rleft(\operatorname{div}(f)\rright)$ is
the union of $B$-invariant prime divisors. The claim then follows from
Proposition~\ref{prop-b-eigenvectors}, the exact sequence above,
and the fact that $\{g_k\}_{k=1}^m$ is a basis of $\Gamma(G/H, \Om^*_{G/H})/\C^*$.
\end{proof}

We denote the corresponding dual basis of $\Nm$ by
$\{v_1, \ldots, v_r, w_1, \ldots, w_m\}$. We also have
$\Mm_\hi = \langle v_1^*, \ldots, v_r^* \rangle$ and
$\Mm_\un = \langle w_1^*, \ldots, w_m^* \rangle$.

\begin{lemma}
\label{le:alg}
Let $0 \ne h \in \Gamma(G/H, \Om_{G/H})$ and $g \in G$
such that $g\cdot h = ch$ where $c$ is a unit. Then we have $c \in \C$.
\end{lemma}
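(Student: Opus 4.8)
The statement says: if $h \in \Gamma(G/H, \Om_{G/H})$ is nonzero and some $g \in G$ satisfies $g \cdot h = c h$ with $c \in \Gamma(G/H, \Om^*_{G/H})$, then in fact $c \in \C$ (i.e.~$c$ is a genuine scalar, not merely an invertible regular function). Let me sketch how I would attack this.

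First I would exploit the structure of the group of units. By the Remark, every element of $\Gamma(G/H, \Om^*_{G/H})$ is a $G$-eigenvector, and $\Gamma(G/H, \Om^*_{G/H})/\C^*$ is a finitely generated free abelian group; writing $p_2 \colon G \to C$ for the projection, the group of units modulo $\C^*$ is identified with $\X(C)^{p_2(H)}$, a group of characters. So $c$ is a $G$-eigenfunction, say with character $\lambda \in \X(G)$ (trivial on $H$, and factoring through $p_2$); we must show $\lambda = 0$. The key observation is an integrality/periodicity argument: iterating $g \cdot h = c h$ gives $g^N \cdot h = \big(\prod_{j=0}^{N-1} g^j \cdot c\big) h$ for all $N \ge 1$. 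Since each $g^j \cdot c$ is again a unit of the same weight $\lambda$ (because $c$ is a $G$-eigenvector, $g \cdot c$ is a scalar multiple of $c$), the product $\prod_{j=0}^{N-1} g^j \cdot c$ is a unit of weight $N\lambda$ times a scalar. If $\lambda \ne 0$, these units have unbounded weight as $N \to \infty$.

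Now I would derive a contradiction from boundedness. Consider the $B$-eigenvector structure: by Proposition~\ref{prop-b-eigenvectors}, a $B$-eigenvector in $\glob{G/H}$ has the form $c' f_1^{d_1} \cdots f_r^{d_r}$ with $c'$ a unit and $d_i \in \N_0$, so its $B$-weight lies in the shifted cone determined by the $v_i^*$ and the (bounded, up to scalars) choices of unit-weight. Decompose $h$ into its $G$-isotypic components $h = \sum h_\mu$ with $h_\mu$ in the isotypic component of highest weight $\mu$; only finitely many $\mu$ occur, and $g^N \cdot h = \sum g^N \cdot h_\mu$ stays inside the same finite-dimensional $G$-submodule $\bigoplus_\mu V_\mu$ for all $N$. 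On the other hand, $g^N \cdot h = \big(\prod_{j} g^j \cdot c\big)h$ has $B$-eigenvector components whose weights are shifted by $N\lambda$ (from the unit factor). Comparing highest weights: the highest weight appearing in $g^N \cdot h$ equals (up to the finitely many choices) $N\lambda + \mu_{\max}$ where $\mu_{\max}$ is a highest weight occurring in $h$; but this must be one of the finitely many highest weights of $\bigoplus_\mu V_\mu$, independent of $N$. Hence $N\lambda$ takes only finitely many values, forcing $\lambda = 0$, and therefore $c \in \Gamma(G/H, \Om^*_{G/H})$ with trivial $G$-weight, which by the Remark (or \cite[Proposition~1.3]{knoppic}) means $c \in \C^*$.

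The main obstacle, I expect, is making the "$g^N \cdot h$ stays in a fixed finite-dimensional $G$-module" step fully rigorous together with tracking how the $B$-weights transform: one needs that multiplication by the unit $c$ shifts all $B$-weights of $h$ by exactly $\chi_c = \lambda$ (clear, since $c$ is a $B$-eigenvector of weight $\lambda$) and that $g^N \cdot h$ remains a regular function with the same span of $G$-translates — which holds because $h$ generates a finite-dimensional $G$-submodule $\langle G \cdot h \rangle$, and all $g^N \cdot h$ lie in it. The finiteness of the set of $B$-weights appearing in a finite-dimensional $G$-module then immediately bounds $N\lambda$. Everything else is bookkeeping with the exact sequence $1 \to \C^* \to \C(G/H)^{(B)} \to \Mm \to 0$ and Proposition~\ref{prop-b-eigenvectors}.
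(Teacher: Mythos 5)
Your argument is correct, but it takes a genuinely different route from the paper's. Both proofs exploit the finite-dimensionality of $\langle G\cdot h\rangle$ and the fact (from \cite[Proposition~1.3]{knoppic}) that $c$ is a $G$-eigenvector, and both iterate the relation to rewrite $g^N\cdot h$ as a scalar multiple of $c^N h$. The paper then stays entirely in commutative algebra: linear dependence of $h, g\cdot h, \ldots, g^d\cdot h$ over $\C$ yields, after dividing by $h$ in the domain $\glob{G/H}$, a nonzero polynomial over $\C$ satisfied by $c$, and algebraic closedness of $\C$ forces $c\in\C$. You instead track the $G$-character $\lambda$ of $c$: multiplication by $c^N$ shifts the highest weights of the isotypic components of $h$ by $N\lambda$, yet all $g^N\cdot h$ lie in $\langle G\cdot h\rangle$, whose set of highest weights is finite, so $\lambda = 0$; then the injectivity of $\Gamma(G/H,\Om^*_{G/H})/\C^*\hookrightarrow\X(G)$ gives $c\in\C^*$. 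Your version is conceptually illuminating in isolating the $G$-weight as the obstruction, but it invokes more machinery (the decomposition into isotypic components, the shift of isotypic type under multiplication by an eigenvector, and Knop's characterization of units via their characters), whereas the paper needs only a linear dependence and that $\C$ is algebraically closed. Two small remarks: the appeal to Proposition~\ref{prop-b-eigenvectors} in your second paragraph is not actually used by the argument that follows and can be dropped, and it is cleaner to phrase the weight comparison in terms of highest weights of isotypic components (as you eventually do) rather than ``$B$-eigenvector components'' of $h$, since an arbitrary element of a finite-dimensional $B$-module need not decompose as a sum of $B$-eigenvectors.
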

\begin{proof}
We recursively define $h_0 \coloneqq h$ and $h_{i+1} \coloneqq g \cdot h_i$
for $i \in \N_0$. As $h$ is contained in a finite-dimensional $G$-module,
there exists $d \in \N$ such that $\lambda_0h_0 + \ldots +\lambda_d h_d=0$
for some coefficients $\lambda_i \in \C$. As $c$ is a $G$-eigenvector
(cf.~\cite[Proposition 1.3]{knoppic}), dividing by $h$ yields
a polynomial relation for $c$. Hence $c$ is algebraic over $\C$.
\end{proof}

\begin{prop}
The $f_{ij}$ are pairwise nonassociated prime elements,
\end{prop}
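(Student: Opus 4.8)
The plan is to treat the two assertions — primality, and pairwise non-associatedness — separately, the main tools being that $G$ acts on $\glob{G/H}$ by ring automorphisms and the basis statement of Proposition~\ref{prop:m-basis}. Primality is immediate: each $f_{ij}$ lies in $G\cdot f_i$, say $f_{ij}=g\cdot f_i$ with $g\in G$, and the action of $g$ on $G/H$ induces a ring automorphism of $\glob{G/H}$ sending $f_i$ to $f_{ij}$; since the $f_i$ were chosen to be prime, so is every $f_{ij}$.

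For non-associatedness, suppose $f_{ij}=c\,f_{i'j'}$ for some unit $c\in\Gamma(G/H,\Om^*_{G/H})$, and let $\chi_c$ denote its $B$-weight; the goal is to deduce $(i,j)=(i',j')$. Since $V_i$ is an irreducible $G$-module containing the nonzero vector $f_{ij}$, we have $V_i=\langle G\cdot f_{ij}\rangle$, and likewise $V_{i'}=\langle G\cdot f_{i'j'}\rangle$. As $c$ is a $G$-eigenvector, multiplication by $c$ carries $V_{i'}$ onto the irreducible $G$-submodule $c\,V_{i'}$ of $\glob{G/H}$; this submodule contains $c\,f_{i'j'}=f_{ij}$, hence contains $\langle G\cdot f_{ij}\rangle=V_i$, and therefore equals $V_i$. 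Comparing highest weights — the $B$-eigenline of $V_i$ is spanned by $f_i$, of weight $v_i^*$, while that of $c\,V_{i'}$ is spanned by $c\,f_{i'}$, of weight $v_{i'}^*+\chi_c$ — we obtain $v_i^*=v_{i'}^*+\chi_c$ in $\Mm$.

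Now $\chi_c$ lies in $\Mm_\un=\langle w_1^*,\ldots,w_m^*\rangle$, since the $g_k$ represent a basis of $\Gamma(G/H,\Om^*_{G/H})/\C^*$ and $\chi_{g_k}=w_k^*$. By Proposition~\ref{prop:m-basis} the set $\{v_1^*,\ldots,v_r^*,w_1^*,\ldots,w_m^*\}$ is a $\Z$-basis of $\Mm$, so the relation $v_i^*-v_{i'}^*=\chi_c$ forces $i=i'$ and $\chi_c=0$. With $\chi_c=0$ the map $v\mapsto c\,v$ is a $G$-equivariant endomorphism of the irreducible module $V_i$, hence multiplication by a scalar by Schur's lemma; evaluating on $f_i\neq 0$ and using that $\glob{G/H}$ is a domain gives $c\in\C^*$. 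Finally $f_{ij}=c\,f_{ij'}$ with $\{f_{i1},\ldots,f_{is_i}\}$ linearly independent forces $j=j'$, so $(i,j)=(i',j')$.

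I expect the only point of real substance to be the identification $c\,V_{i'}=V_i$ and the ensuing comparison of highest weights, i.e. the observation that multiplying an irreducible $G$-submodule by a unit $G$-eigenvector merely twists it by the character $\chi_c$; everything else is bookkeeping with the basis of $\Mm$ and the irreducibility of the modules $V_i$.
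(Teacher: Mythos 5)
Your proof is correct; the primality argument and the reduction to $i=i'$ via $B$-weights and Proposition~\ref{prop:m-basis} parallel the paper, but the final step is genuinely different. The paper, once it has $i_1 = i_2$, produces $g' \in G$ with $g'\cdot f_{i_2 j_2} = c f_{i_2 j_2}$ and invokes Lemma~\ref{le:alg} (an algebraicity argument via a linear recurrence inside a finite-dimensional $G$-module) to conclude $c \in \C$. You instead extract the extra information $\chi_c = 0$ from the same weight comparison, observe that $\chi_c = 0$ forces the $G$-character of the unit $c$ to be trivial (since $\X(G) = \X(C)$ and $C \subseteq B$), and conclude $c \in \C^*$ by Schur's lemma applied to multiplication by $c$ on the irreducible module $V_i$. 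Your route has the advantage of not needing Lemma~\ref{le:alg} at all, replacing its ad hoc recurrence argument with the standard representation-theoretic fact; the paper's route keeps the weight comparison minimal (it only needs $i_1 = i_2$, not $\chi_c = 0$) at the cost of a separate lemma. Both are complete proofs of the same length once the auxiliary lemma is counted.
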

\begin{proof}
For every $f_{ij}$ we have $f_{ij} \in G \cdot f_i$ and $f_i$ is prime, hence $f_{ij}$ is also prime.

Now let $f_{i_1j_1} = cf_{i_2j_2}$ for some unit $c$. Then $g\cdot f_{i_1} = c'f_{i_2}$ for some unit $c'$ and $g \in G$.
By Proposition~\ref{prop:m-basis}, we have $i_1 = i_2$, so there exists $g' \in G$
with $g'\cdot f_{i_2j_2} = f_{i_1j_1} = cf_{i_2j_2}$. By Lemma~\ref{le:alg}, it follows that $c \in \C$, therefore $j_1 = j_2$.
\end{proof}

The map defined in the introduction
\begin{align*}
\Phi: S(V) \otimes_\C \C[T] &\to \glob{G/H} \\
S_{ij} &\mapsto f_{ij}\\
T_k &\mapsto g_k\text{,}
\end{align*}
is surjective by Proposition~\ref{generators} and $G$-equivariant, 
hence indeed induces a locally closed $G$-equivariant
embedding $G/H \hookrightarrow Z$
with respect to the corresponding $G$-action on $Z$.

Finally, we have to explain how $Z$ can be naturally regarded as a toric variety.
We denote by $M$ the finitely generated free abelian group with
basis 
\begin{align*}
\{S_{ij}, T_k; 1 \le i \le r, 1 \le j \le s_i, 1 \le k \le m\}\text{,}
\end{align*}
which is isomorphic to $\Z^{s_1+\ldots+s_r+m}$. We define the torus $\T \coloneqq \Spec(\C[M])$ with character lattice $M$, denote the dual lattice by $N \coloneqq \Hom(M, \Z)$,
denote the corresponding dual basis by
\begin{align*}
\{v_{ij}, w_k; 1 \le i \le r, 1 \le j \le s_i, 1 \le k \le m\}\text{,}
\end{align*}
 and set $N_\Q \coloneqq N \otimes_\Z \Q$.
The surjective map $M \to \Mm$ sending $S_{ij} \mapsto v_i^*$ and 
$T_k \mapsto w_k^*$ induces an inclusion $\Nm_\Q \hookrightarrow N_\Q$
sending $v_i \mapsto v_{i1} + \ldots + v_{is_i}$ and $w_k \mapsto w_k$.
The action of $\T$ on $Z$ makes $Z$ a toric variety.

\begin{theorem}
\label{prop-trop}
We have
\begin{align*}
\Vm = \trop(G/H \cap \T) \cap \Nm_\Q \text{.}
\end{align*}
\end{theorem}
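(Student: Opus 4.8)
The plan is to set up a comparison between the valuation cone $\Vm$, described intrinsically via the cone $\Tm$ of the previous section, and the tropicalization $\trop(G/H \cap \T)$, described via initial degenerations of the ideal $\I(G/H)$. Throughout I would work inside $N_\Q$ and use the inclusion $\Nm_\Q \hookrightarrow N_\Q$ already fixed; the intersection with $\Nm_\Q$ on the right-hand side is what makes the two sides comparable.

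First I would recall the valuative description of tropicalization (the fundamental theorem of tropical geometry): a point $u \in N_\Q$ lies in $\trop(G/H \cap \T)$ if and only if the initial ideal $\operatorname{in}_u(\I(G/H))$ contains no monomial, equivalently if and only if there is a valuation on the coordinate ring of $G/H$ (over a valued extension field, or a $G$-invariant valuation after a Rees-algebra rescaling) whose restriction to characters is $u$. For the inclusion $\Vm \cap \Nm_\Q \subseteq \trop(G/H\cap\T)$, I would take $u \in \Vm$, lift it to a $G$-invariant discrete valuation $\nu$ on $\C(G/H)$ by the definition of the valuation cone, and observe that the associated graded of $\glob{G/H}$ with respect to $\nu$ is an integral domain — so the initial ideal $\operatorname{in}_u(\I(G/H))$ is prime, hence monomial-free — which shows $u \in \trop(G/H \cap \T)$. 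Concretely one checks that for $f \in \I(G/H)$ the polynomial $f^\alpha$ from the introduction encodes exactly the $u$-initial form, and that $\Phi$ being $G$-equivariant forces $\nu$ to pull back to a valuation on $S(V)\otimes\C[T]/\I(G/H)$ with no monomials killed.

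For the reverse inclusion $\trop(G/H \cap \T) \cap \Nm_\Q \subseteq \Vm$, I would argue by duality using the characterization $-\Vm = \Tm^\vee$ recalled above. Suppose $u \in \Nm_\Q$ but $u \notin \Vm$; then there is a spherical root $\gamma$ (equivalently an element $\mu_1 + \mu_2 - \mu_3 \in \Tm$ with $V_{\mu_3}\subseteq V_{\mu_1}\cdot V_{\mu_2}$) with $\langle u, \gamma\rangle > 0$. Choosing highest-weight vectors $f_{\mu_1}, f_{\mu_2}$ and expanding their product, the relation $f_{\mu_1} f_{\mu_2} = (\text{terms including one of } B\text{-weight } \mu_3) + \cdots$ lies in $\I(G/H)$ after pulling back along $\Phi$; its $u$-initial form is then a single monomial (the one supported on the pure highest-weight term, whose $u$-value is strictly smallest), so $\operatorname{in}_u(\I(G/H))$ contains a monomial and $u \notin \trop(G/H\cap\T)$. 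Here I would use that the $B$-weights live in $\Mm$ and that the grading by $M$ refines the $\Mm$-grading, so an $\Mm$-homogeneous component of smallest $u$-value gives an honest monomial initial form.

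The main obstacle I anticipate is the bookkeeping in the second inclusion: I need to produce, for each $u$ violating a single spherical-root inequality, an element of $\I(G/H)$ whose $u$-initial form is genuinely a monomial and not merely a binomial or a polynomial supported on a single $\Mm$-weight. This requires controlling which lower-weight $G$-submodules actually appear in $V_{\mu_1}\cdot V_{\mu_2}$ and checking that the $M$-grading (finer than the $\Mm$-grading, since each $V_i$ contributes several coordinates $S_{ij}$) does not split the relevant initial term into several incomparable monomials. I expect this to follow from the fact that highest-weight vectors are $B$-eigenvectors with one-dimensional weight spaces, so the extreme term in the product expansion is a product of the $f_{i1} = f_i$'s and units, i.e.\ exactly one monomial in the $S_{i1}$ and $T_k$; but making this precise, uniformly over all $u \notin \Vm$, is where the real work lies. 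A clean way to package it is to show directly that $\trop(G/H\cap\T)\cap\Nm_\Q$ is a closed convex cone cut out by the inequalities $\langle\,\cdot\,,\gamma_i\rangle \le 0$, matching the half-space description of $\Vm$ from the introduction.
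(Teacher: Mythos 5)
Your proposal takes a genuinely different route from the paper. The paper's proof is a short corollary of the geometric machinery built in Section~\ref{two}: it embeds $Y$ (or, for the reverse inclusion, the possibly non-normal closure $Y'$) into the toric variety $X$, invokes Tevelev's criterion that a $\T$-orbit meets $\overline{G/H}$ iff $\trop(G/H\cap\T)$ meets the relative interior of the corresponding cone, and then uses Lemma~\ref{le-val} together with Remark~\ref{re-primes} to convert the intersection of $Y'\setminus(G/H)$ with a $\T$-orbit into a $G$-invariant valuation inducing exactly that ray. Both inclusions are handled by this one mechanism, and the argument is essentially free once the Main Theorem setup is in place. Your approach instead works on the algebra side via the fundamental theorem of tropical geometry and the cone $\Tm$, which would make the statement logically independent of Section~\ref{two}; that is an attractive feature, and the first inclusion (a $G$-invariant valuation has integral associated graded, so $\operatorname{in}_u(\p)$ is prime, hence monomial-free) is sound, essentially because the horospherical contraction of $\glob{G/H}$ along $u\in\Vm$ is a domain and the weight function $u$ on $M$ factors through the $\Mm$-grading.

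The genuine gap is in your reverse inclusion, and it is more than bookkeeping. You want to exhibit, for every $u\in\Nm_\Q$ with $\langle u,\gamma\rangle>0$ for some spherical root $\gamma=\mu_1+\mu_2-\mu_3$, an element of $\p$ whose $u$-initial form is a monomial. The difficulty is twofold. First, $V_{\mu_1}$ and $V_{\mu_2}$ need not be among the $V_i$'s, so a highest-weight vector $g\in V_{\mu_1}\otimes V_{\mu_2}$ of weight $\mu_3$ has no canonical monomial lift to $\C[Z]$; its lift $F$ is in general a polynomial in the $S_{ij}$, and the relation $F-cm_3$ (with $m_3$ the monomial lift of the $B$-eigenvector $\Phi(g)\in V_{\mu_3}$, which \emph{is} a monomial by Proposition~\ref{prop-b-eigenvectors}) has an initial form that may involve many terms of $F$ of the same $u$-value, not a single monomial. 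Second, even ignoring that, it is not automatic that the minimum of $\langle u,\cdot\rangle$ over the $\Mm$-weights appearing in the relation is attained uniquely at $\mu_3$: other $\mu\preceq\mu_1+\mu_2$ can occur, and since $u$ may satisfy $\langle u,\gamma_i\rangle<0$ for other spherical roots $\gamma_i$, one of those $\mu$ can have smaller $u$-value than $\mu_3$. Your proposed fix---that the extremal term is a product of the $f_{i1}$ and units---only controls the $B$-eigenvector $\Phi(g)$, not the other $\Mm$-homogeneous pieces of a lift $F$, so the argument as stated does not yet show $\operatorname{in}_u(\p)$ contains a monomial. The paper sidesteps all of this: Lemma~\ref{le-val} produces a $G$-invariant valuation directly from the geometry, and then $u\notin\Vm$ is a contradiction by definition.
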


The proof will be given at the end of Section~\ref{two}.
We can compare this result to the approach Luna and Vust introduced in
\cite{lunavust} using formal curves (cf.~\cite[Chapter~24]{ti}),
which shows that every $G$-invariant discrete valuation $\nu$
can be obtained up to proportionality by choosing a $\C(\!(t)\!)$-valued point $x(t)$ of $G/H$
and defining
\begin{align*}
\nu(f) \coloneqq \ord(f(g\cdot x(t)))
\end{align*}
where $g \in G$ is a general point depending on $f \in \C(G/H)$.

On the other hand, taking into account the fundamental theorem of tropical algebraic
geometry (cf.~\cite[Theorem~2.1]{maintrop}), Theorem~\ref{prop-trop}
implies that any $G$-invariant discrete valuation $\nu$ comes from
a $\C\{\!\{t\}\!\}$-valued point $x(t)$ of $G/H$ satisfying
\begin{align*}
\ord(f_{ij}(x(t))) = \ord(f_i(x(t)))
\end{align*}
for every $i$ and $j$ and is uniquely determined
by
\begin{align*}
\nu(f_i) = \ord(f_i(x(t))) \text{ and } \nu(g_k) = \ord(g_k(x(t)))
\end{align*}
for every $i$ and $k$.

\section{Proof of the Main Theorem}
\label{two}

We continue to use the notation and the assumptions from the previous section in
this section.

\begin{remark}
\label{rem-linear-action}
The action of $G$ on
\begin{align*}
Z \cong \C^{s_1} \times \ldots \times \C^{s_r} \times \C^* \times \ldots \times \C^*
\end{align*}
is linear on each factor.
\end{remark}

The natural action of the torus $\Spec(\C[\Mm])$ on $Z$ defines a
corresponding $\Mm$-grading on $\C[Z]$. We set $\p \coloneqq \I(G/H) = \ker \Phi$.

\begin{prop}
\label{prop-grading}
The prime ideal $\p$ is $\Mm$-graded, i.e.\ $\glob{G/H}$ is a $\Mm$-graded $\C$-algebra,
if and only if $G/H$ is horospherical.
\end{prop}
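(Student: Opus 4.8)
The plan is to show that the $\Mm$-grading on $\C[Z]$ descends to $\glob{G/H} = \C[Z]/\p$ exactly when $\p$ is a homogeneous ideal, and then to identify the latter condition with horosphericality using the description of $\Vm$ via the cone $\Tm$ from Section~\ref{one}. First I would recall that $\glob{G/H}$ carries the residual $\Mm$-grading if and only if $\p$ is $\Mm$-homogeneous, which is the statement to be proved; so the work is to characterize when $\p$ is homogeneous. For this I would use the isotypic decomposition $\glob{G/H} = \bigoplus_\mu V_\mu$ over the highest weights $\mu \in \Mm$. Each $V_\mu$ is a finite-dimensional irreducible $G$-module spanned by the $G$-translates of its highest weight vector, and that highest weight vector is, by Proposition~\ref{prop-b-eigenvectors}, a monomial $f_1^{d_1}\cdots f_r^{d_r}$ times a unit, hence (after adjusting units by the $g_k$) lies in a single $\Mm$-homogeneous piece of $\C[Z]$. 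The key point is that $\glob{G/H}$ is $\Mm$-graded precisely when each $V_\mu$ sits inside a single homogeneous component, and that this fails exactly when there exist highest weights $\mu_1, \mu_2, \mu_3$ with $V_{\mu_3} \subseteq V_{\mu_1}\cdot V_{\mu_2}$ but $\mu_3 \neq \mu_1 + \mu_2$.

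Concretely, the forward direction is immediate: if $G/H$ is horospherical then by the discussion after Proposition~\ref{prop-grading}'s surrounding text (the characterization $\Vm = \Nm_\Q$, equivalently $\Tm = 0$) every product $V_{\mu_1}\cdot V_{\mu_2}$ is isotypic of weight $\mu_1 + \mu_2$, so multiplication respects the $\Mm$-grading, and $\p$, being a $G$-stable ideal generated by the kernel of $\Phi$, is $\Mm$-homogeneous because $\Phi$ itself becomes a graded map. For the converse I would argue contrapositively: if $G/H$ is not horospherical, then $\Tm \neq 0$, so there are highest weights with $V_{\mu_3}\subseteq V_{\mu_1}\cdot V_{\mu_2}$ and $\mu_1 + \mu_2 - \mu_3 \neq 0$. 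Lifting highest weight vectors $\tilde f_{\mu_i}$ of $V_{\mu_i}$ to monomials in the $f_k$ and $g_k$, the element $\tilde f_{\mu_1}\tilde f_{\mu_2}$ of $\C[Z]$ is $\Mm$-homogeneous of weight $\mu_1 + \mu_2$, but the image of $\C[Z]$ in $\glob{G/H}$ contains a nonzero component in $V_{\mu_3}$, forcing a relation in $\p$ between a weight-$(\mu_1+\mu_2)$ monomial and a preimage of a $V_{\mu_3}$-vector of different $\Mm$-weight; such a relation cannot be $\Mm$-homogeneous, so $\p$ is not $\Mm$-graded.

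The main obstacle is making the last step precise: one must produce an actual element of $\p$ that is visibly not $\Mm$-homogeneous, rather than merely observing that multiplication in $\glob{G/H}$ fails to be graded. The cleanest route is to choose, for each isotypic component $V_\mu$, a fixed preimage under $\Phi$ that is a monomial in the $S_{ij}, T_k$ of $\Mm$-weight $\mu$ (possible since highest weight vectors are monomials up to units, and one then spreads across the irreducible $G$-module using $G$-equivariance of $\Phi$ together with Remark~\ref{rem-linear-action}); then the failure $V_{\mu_3} \subseteq V_{\mu_1}\cdot V_{\mu_2}$ with $\mu_3 \neq \mu_1+\mu_2$ yields two preimages of the same element of $\glob{G/H}$ with distinct $\Mm$-weights, whose difference is the desired inhomogeneous element of $\p$. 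I would also double-check the easy direction carefully, since one needs that the generators of $\p$ can be taken $\Mm$-homogeneous in the horospherical case; this follows because, when $\glob{G/H}$ is $\Mm$-graded, $\Phi$ is a homomorphism of $\Mm$-graded algebras and the kernel of a graded homomorphism is a graded ideal.
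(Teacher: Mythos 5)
Your proposal and the paper's proof take genuinely different routes: the paper's proof of Proposition~\ref{prop-grading} is a one-line citation to Timashev's Proposition~7.6, whereas you give a self-contained argument via the cone $\Tm$ from Section~\ref{one}. Your argument is sound in outline. The crucial observations are (i) the preimage under $\Phi$ of the highest weight vector of $V_\mu$ is a monomial $\prod S_{i1}^{d_i}\prod T_k^{e_k}$, which is $\Mm$-homogeneous of degree exactly $\mu$; (ii) hence if $\p$ is $\Mm$-graded, the induced grading on $\glob{G/H}$ satisfies $V_\mu \subseteq \glob{G/H}_\mu$ for every highest weight $\mu$, so by dimension count it must coincide with the isotypic decomposition; (iii) the isotypic decomposition is an $\Mm$-algebra grading iff $\Tm = \{0\}$, which by the description $-\Vm = \Tm^\vee$ is equivalent to $\Vm = \Nm_\Q$, i.e.\ to horosphericality. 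What this buys, compared with the paper, is a proof internal to the machinery already set up, which may be pedagogically preferable; the paper's citation is of course shorter.

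Two remarks on presentation. In the easy direction, the phrase ``$\p$, being a $G$-stable ideal generated by the kernel of $\Phi$'' is confusing since $\p$ \emph{is} the kernel; the precise point is that once you endow $\glob{G/H}$ with the isotypic grading and check $\Phi(S_{ij}) = f_{ij} \in V_{v_i^*}$, $\Phi(T_k) = g_k \in V_{w_k^*}$, the map $\Phi$ is graded and its kernel is automatically a graded ideal. In the converse direction, your ``explicit element'' construction can be made to work (take $a\in V_{\mu_1}$, $b \in V_{\mu_2}$ with the projection $(ab)_{\mu_3}\ne 0$, lift $a$, $b$, and each isotypic piece of $ab$ to $\Mm$-homogeneous preimages, and observe the difference lies in $\p$ but has a $\mu_3$-component outside $\p$), but it is more elaborate than needed: observation (ii) above already shows that a grading on $\p$ forces $\Tm = \{0\}$ directly, without exhibiting any particular inhomogeneous element. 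I would rewrite the converse along those lines, both for brevity and because the step ``spreads across the irreducible $G$-module using $G$-equivariance'' as currently phrased invites the reader to worry about well-definedness of lifts, which the indirect argument avoids entirely.
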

\begin{proof}
This follows from \cite[Proposition 7.6]{ti}.
\end{proof}

\begin{lemma}
\label{le-stdemb}
Every non-open $G$-orbit of the spherical embedding
\begin{align*}
G/H \hookrightarrow \overline{G/H} \subseteq Z
\end{align*}
is contained in the closure of a $B$-invariant prime divisor in $G/H$.
\end{lemma}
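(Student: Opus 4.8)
The plan is to analyze the toric structure on $Z$ together with Tevelev's criterion to pin down which $G$-orbits can occur in $\overline{G/H}$. Recall from the previous section that $Z$ is a toric variety for the torus $\T = \Spec(\C[M])$, and that $G/H \cap \T$ is a closed subset of $\T$ with tropicalization $\trop(G/H \cap \T) \subseteq N_\Q$. The $G$-orbits of the spherical embedding $G/H \hookrightarrow \overline{G/H}$ are contained in $\T$-orbits of $Z$: indeed, every $G$-invariant prime divisor of $\overline{G/H}$ lies over a $\T$-orbit of codimension one in $Z$, and more generally a $G$-orbit $\Om$ in $\overline{G/H}$ meets the $\T$-orbit $\Om_\sigma$ corresponding to $\sigma \in \Sigma_Z$ precisely when $\trop(G/H \cap \T)$ meets $\relint(\sigma)$, by Tevelev's theorem. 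The key point I want is that a cone $\sigma$ carrying a non-open $G$-orbit cannot be one of the rays $\Q_{\ge 0} v_{ij}$ with $j > 1$ nor any cone transverse to the image of $\Nm_\Q$; rather, the relevant $\T$-orbits must be cut out by setting some coordinate $S_{i1} = f_{i1} = f_i$ to zero.

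First I would describe the $\T$-invariant prime divisors of $Z \cong \C^{s_1+\ldots+s_r} \times (\C^*)^m$ explicitly: they are exactly the hyperplanes $\{S_{ij} = 0\}$ for $1 \le i \le r$, $1 \le j \le s_i$, corresponding to the rays $\Q_{\ge 0}v_{ij}$. Next, using the $G$-action on $Z$ which is linear on each factor (Remark~\ref{rem-linear-action}), I would observe that for fixed $i$ the divisors $\{S_{ij} = 0\} \cap \overline{G/H}$ for varying $j$ are permuted transitively up to the open $G$-orbit, since the $f_{ij}$ all lie in the single $G$-orbit $G \cdot f_i$; hence each of these, restricted to $G/H$, equals the $B$-invariant prime divisor $D_i = \V(f_i)$ (or rather a $G$-translate thereof, which is again among the $D_i$ by Proposition~\ref{prop-b-eigenvectors}, but in fact the divisor $\V(f_{ij})$ in $G/H$ need not be $B$-invariant — what matters is that its $G$-saturation is controlled). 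The cleaner route: a non-open $G$-orbit $\Om \subseteq \overline{G/H} \setminus (G/H)$ lies in some $\T$-orbit $\Om_\sigma$; pick $u \in \relint(\sigma) \cap \trop(G/H \cap \T)$, and by Theorem~\ref{prop-trop} after intersecting with $\Nm_\Q$, or directly by a limiting argument along a curve $x(t)$ realizing $u$, the valuation $\nu = \ord \circ x(t)$ is a discrete valuation on $\C(G/H)$ with $\nu(f_i) > 0$ for at least one $i$ (otherwise the curve would stay in $G/H$, i.e.\ in no $\{S_{ij}=0\}$, contradicting that $\Om$ is not the open orbit). Then the center of $\nu$ on $G/H$, if it is a divisor, is supported on $\V(f_i)$; I would then propagate this from the curve to the whole orbit $\Om$ using $G$-invariance of the construction and the fact that $\Om$ is irreducible and $G$-stable, concluding that $\overline{\Om} \cap (G/H) \subseteq \bigcup_i D_i$ after possibly moving by $G$, and since the $D_i$ are the only $B$-invariant prime divisors, $\Om$ lies in the closure of some $D_i$.

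The main obstacle I anticipate is the bookkeeping in passing from the statement "some $S_{ij}$ vanishes on $\Om$" to "$\Om$ lies in the closure of a $B$-invariant prime divisor of $G/H$": a priori $\V(f_{ij}) \subseteq G/H$ for $j \ge 2$ is only a $G$-translate of a $B$-invariant divisor, not $B$-invariant itself, so one must argue that it is still \emph{one of} the $D_l$ — equivalently that $\{f_{ij} = 0\}$ and $\{f_{i1}=0\}$ have the same closure in $\overline{G/H}$ because they differ by an element of $G$ acting on the irreducible $G$-module $V_i$ and the closure absorbs the $G$-action's limits; concretely $\overline{\V(f_{ij})}$ and $\overline{\V(f_i)}$ meet the same set of $G$-orbits because they are $G$-translates and $\overline{G/H}$ is $G$-stable. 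A careful treatment of the boundary behavior — that taking closures in $\overline{G/H}$ and applying $g \in G$ commute, so a non-open $G$-orbit in $\overline{\V(f_{ij})}$ also lies in $\overline{\V(f_i)} = \overline{D_i}$ — is the crux, and I would spell it out via: $g \cdot \overline{D_i}^{\,\overline{G/H}} = \overline{g\cdot D_i}^{\,\overline{G/H}} \supseteq \Om$ iff $\overline{D_i}^{\,\overline{G/H}} \supseteq g^{-1}\Om = \Om$, using that $\Om$ is $G$-stable. Everything else reduces to the already-established dictionary between $\T$-orbits, cones of $\Sigma_Z$, and the divisors $\{S_{ij}=0\}$.
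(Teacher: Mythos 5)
Your argument has a genuine gap. You assert that "a non-open $G$-orbit $\Om \subseteq \overline{G/H} \setminus (G/H)$ lies in some $\T$-orbit $\Om_\sigma$", but this is false: the $G$-action on $Z$ is linear on each factor $V_i^*$ and mixes the coordinates $S_{i1}, \ldots, S_{is_i}$, so a single $G$-orbit in general meets several $\T$-orbits of $Z$. (Already the open orbit $G/H$ itself meets the big torus $\T$ and, generically along each $D_i = \V(S_{i1}) \cap G/H$, the codimension-one $\T$-orbit where only $S_{i1}$ vanishes.) What is true — and is precisely the content of the proposition immediately \emph{after} Lemma~\ref{le-stdemb}, which is proved \emph{using} the lemma — is that a $G$-orbit $O$ in $\overline{G/H}$ either has all of $V_i$ vanishing on it or none of it, by $G$-invariance of $O$ and irreducibility of $V_i$; but that implication runs in the wrong direction to establish the lemma itself. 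There is also a logical-ordering concern: you invoke Theorem~\ref{prop-trop} and the tropicalization of $G/H \cap \T$, but that theorem is proved at the end of Section~\ref{two} via Proposition~\ref{prop-main} and Lemma~\ref{le-val}, which rest on the toric construction that this lemma (through the proposition following it) feeds into. Tevelev's criterion on its own tells you only which $\T$-orbits of $Z$ meet $\overline{G/H}$; without already knowing $\trop(G/H\cap\T) \cap \Nm_\Q = \Vm$ it gives no control over how $G$-orbits sit inside those $\T$-orbits.

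The paper's proof is much shorter and avoids the toric picture entirely. By Proposition~\ref{generators} the map $\Phi$ is surjective, so $\glob{G/H}$ and $\glob{\overline{G/H}}$ coincide (both equal $\C[Z]/\p$); since $\overline{G/H} = \Spec(\glob{G/H})$ is normal affine, the complement $\overline{G/H} \setminus (G/H)$ must have codimension at least two, i.e.\ there are no $G$-invariant prime divisors in $\overline{G/H}$. The conclusion is then a standard fact of Luna--Vust theory (cf.~\cite{knopsph}): in a spherical embedding with no $G$-invariant prime divisor, every non-open $G$-orbit lies in the closure of some color, i.e.\ a $B$-invariant prime divisor of $G/H$. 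Your final observation — that $\Om \subseteq \overline{g \cdot D_i}$ forces $\Om \subseteq \overline{D_i}$ since $\Om$ is $G$-stable — is correct, but it addresses a secondary bookkeeping point rather than the heart of the matter.
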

\begin{proof}
It follows from Proposition~\ref{generators} and the definition
of the map $\Phi$ that $G/H$ and $\overline{G/H}$ have the same ring of global sections.
Therefore the non-open $G$-orbits are at least
of codimension two. It follows from the general theory
of spherical embeddings (cf.~\cite{knopsph}) that
any non-open $G$-orbit
lies in the closure of some $B$-invariant
prime divisor in $G/H$ if there is no $G$-invariant
prime divisor.
\end{proof}

\begin{prop}
Let $Z_i \coloneqq \V(S_{ij}; 1 \le j \le s_i) \subseteq Z$.
Then we have
\begin{align*}
G/H = \overline{G/H} \setminus (Z_1 \cup \ldots \cup Z_r)\text{.}
\end{align*}
\end{prop}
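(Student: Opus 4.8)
The plan is to identify $\overline{G/H}$ with a compatible union of orbit closures and to show that removing exactly the divisors $Z_i$ recovers the open orbit. First I would note that $Z_i = \V(S_{ij}; 1 \le j \le s_i)$ is the closure in $Z$ of the coordinate subspace where the $i$-th block of coordinates vanishes; since under $\Phi$ we have $S_{ij} \mapsto f_{ij}$ and $\V(f_i) = D_i$ in $G/H$, pulling back the ideal of $Z_i$ to $\glob{G/H}$ gives an ideal whose zero set is contained in $D_i = \V(f_i)$ (indeed, the $f_{ij}$ all lie in $G\cdot f_i$, and $f_{i1}=f_i$, so $Z_i \cap G/H \subseteq D_i$; conversely on $D_i$ every $f_{ij}=g\cdot f_i$ for suitable $g$ vanishes because $D_i$ is $B$-invariant hence its pullback under the relevant translates still vanishes — more carefully, $\V(f_{ij})\supseteq g^{-1}D_i$, and since $D_i$ is the $B$-stable and we only need one inclusion). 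So the open subset $\overline{G/H} \setminus (Z_1 \cup \ldots \cup Z_r)$ contains $G/H \setminus (D_1 \cup \ldots \cup D_r)$, which is the open $B$-orbit, and in particular is nonempty.

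Next I would use Lemma~\ref{le-stdemb}: every non-open $G$-orbit of the embedding $G/H \hookrightarrow \overline{G/H}$ lies in the closure (inside $G/H$) of some $B$-invariant prime divisor $D_i$. Combining this with the previous paragraph, every non-open $G$-orbit of $\overline{G/H}$ meets $Z_i$ for some $i$, and being a $G$-orbit (hence irreducible and $\T$- and $G$-stable in the relevant sense) it is actually contained in $Z_i$ once it meets it, because $Z_i$ is closed and $G$-stable: indeed $Z_i$ is cut out by the span $V_i = \langle G\cdot f_i\rangle$, which is a $G$-submodule, so $\V(V_i)$ is $G$-stable. Therefore $Z_1 \cup \ldots \cup Z_r$ contains every non-open $G$-orbit of $\overline{G/H}$, which gives the inclusion $\overline{G/H} \setminus (Z_1 \cup \ldots \cup Z_r) \subseteq G/H$.

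For the reverse inclusion $G/H \subseteq \overline{G/H} \setminus (Z_1 \cup \ldots \cup Z_r)$, I would argue that $G/H$ cannot meet any $Z_i$: if a point $x \in G/H$ lay in $Z_i$, then all $f_{ij}$ would vanish at $x$, in particular $f_i(x) = 0$, so $x \in D_i$; but then translating, the whole $G$-orbit $G\cdot x$ (which is dense, equal to $G/H$ since $G/H$ is homogeneous) would meet $Z_i$, hence by the $G$-stability argument above $G/H \subseteq Z_i$, forcing $f_i \equiv 0$ on $G/H$, contradicting that $f_i$ is a nonzero prime element. Hence $G/H \cap Z_i = \emptyset$ for all $i$, and since $G/H$ is open in $\overline{G/H}$ by construction of the embedding, we conclude $G/H = \overline{G/H}\setminus(Z_1 \cup \ldots \cup Z_r)$.

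The main obstacle I anticipate is the careful bookkeeping around the inclusion $Z_i \cap G/H \subseteq D_i$ versus equality, and making precise that $Z_i$ is $G$-stable in the ambient $Z$: one must invoke that $V_i$ is a $G$-submodule of $V$ and that the $G$-action on $Z = V^* \times T$ restricts to the linear action on each $V_i^*$ factor (Remark~\ref{rem-linear-action}), so that $\V(S_{i1}, \ldots, S_{is_i})$ is genuinely $G$-invariant rather than merely $\T$-invariant. Once that is set up, the orbit-closure argument via Lemma~\ref{le-stdemb} closes everything cleanly; the rest is the standard fact that a dense orbit in an irreducible variety cannot be contained in a proper closed subset.
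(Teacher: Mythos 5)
Your proof follows the same strategy as the paper (use Lemma~\ref{le-stdemb}, then the irreducibility of $V_i$ and $G$-invariance), and your final paragraph even supplies the check that $G/H \cap Z_i = \emptyset$, which the paper leaves implicit. However, there is a genuine non-sequitur in your second paragraph. From Lemma~\ref{le-stdemb} you know $O \subseteq \overline{D_i}$; since $S_{i1}$ restricts to $f_i$ on $G/H$ and $f_i$ vanishes on $D_i$, what actually follows is that $S_{i1}$ vanishes on $\overline{D_i}$, hence on $O$. This does \emph{not} yet say that $O$ meets $Z_i = \V(S_{i1},\ldots,S_{is_i})$, and nothing in your first paragraph (which is about $Z_i \cap G/H \subseteq D_i$, a statement inside $G/H$) helps bridge that. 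You then deduce $O \subseteq Z_i$ from ``$O$ meets $Z_i$'' via $G$-invariance of $Z_i$, but the intermediate claim ``$O$ meets $Z_i$'' is in fact exactly as hard as $O \subseteq Z_i$: there is no cheaper way to produce a point of $O$ where \emph{all} the $S_{ij}$ vanish.

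The correct argument uses the $G$-invariance of $O$ (not of $Z_i$) in the decisive step: the ideal of $O$ in $\C[Z]$ is $G$-stable and contains $S_{i1}$, and since $V_i \subseteq S(V)$ is the irreducible $G$-module generated by $S_{i1}$, the whole of $V_i$ lies in that ideal; hence $O \subseteq \V(V_i) = Z_i$ directly. You already identified the key fact that $V_i$ is a $G$-submodule, so this is a matter of rerouting the logic rather than introducing a new idea. Separately, the parenthetical in your first paragraph is confused: $\V(f_{ij}) = g^{-1}D_i$ for the relevant $g$ is generally different from $D_i$, so the ``converse'' inclusion $D_i \subseteq Z_i \cap G/H$ is false (indeed $Z_i \cap G/H = \emptyset$, as your last paragraph correctly shows); it would be cleaner to state only the one inclusion $Z_i \cap G/H \subseteq D_i$, or omit the discussion since it is not needed.
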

\begin{proof}
Let $O$ be a non-open $G$-orbit in $\overline{G/H}$. By Lemma~\ref{le-stdemb},
there exists a $D_i$ with $O \subseteq \overline{D_i}$. It follows that
$S_{i1}$ vanishes on $O$. Since $V_i$
is an irreducible $G$-module and $O$ is $G$-invariant, the whole module
vanishes on $O$, so $O \subseteq Z_i$ follows.
\end{proof}

We set
\begin{align*}
X_0 \coloneqq Z \setminus (Z_1 \cup \ldots \cup Z_r)\text{.}
\end{align*}
Then $G/H \hookrightarrow X_0$ is a closed embedding, and $G$ and $\T$ act on $X_0$.
Consider the fan $\Sigma_{X_0}$ in $N_\Q$ corresponding to the toric variety
$X_0$. Our plan is to define a fan $\Sigma_X$ extending
the fan $\Sigma_{X_0}$ such that the closure of $G/H$
inside the toric variety $X$ associated
to the fan $\Sigma_X$ is the spherical variety $Y$ corresponding to the
fan $\Sigma$.
We have already established the natural inclusion $\Nm_\Q \hookrightarrow N_\Q$,
so the cones of the fan $\Sigma$ in $\Nm_\Q$ can as well be considered as cones
in $N_\Q$. But extending $\Sigma_{X_0}$ by these one-dimensional cones
is not enough, since it might be impossible to extend the action of $G$ on $X_0$
to the resulting toric variety $X$. The plan can be carried out, however,
if we also add some higher-dimensional cones.

We construct the fan $\Sigma_X$ in $N_\Q$ as follows.
We first define the set
\begin{align*}
\mathfrak{A} \coloneqq \rleft\{\mathfrak{a} \subseteq \rleft\{v_{ij}\rright\}; \text{for each $i$ there is exactly one $j$ with $v_{ij} \notin \mathfrak{a}$}  \rright\}\text{.}
\end{align*}
For each $1 \le l \le n$ and $\mathfrak{a} \in \mathfrak{A}$ we define the cone
\begin{align*}
\sigma_{l, \mathfrak{a}} \coloneqq \cone\rleft(\rleft\{u_l\rright\} \cup \mathfrak{a}\rright)\subseteq N_\Q \text{} 
\end{align*}
and set
\begin{align*}
\Sigma_X \coloneqq \fan\rleft(\{\sigma_{l,\mathfrak{a}}; 1 \le l \le n, \mathfrak{a} \in \mathfrak{A}\}\rright)\text{,}
\end{align*}
the fan generated by the $\sigma_{l,\mathfrak{a}}$ in $N_\Q$.
\begin{prop}
The cones $\sigma_{l,\mathfrak{a}}$ are smooth and compatible.
In particular, the fan $\Sigma_X$ is well-defined.
\end{prop}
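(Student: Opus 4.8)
The plan is to verify the two assertions separately: first that each cone $\sigma_{l,\mathfrak{a}}$ is smooth, i.e.\ its primitive generators form part of a $\Z$-basis of $N$; and second that any two of these cones intersect in a common face, so that the collection indeed generates a fan in the technical sense. Throughout I would work with the explicit $\Z$-basis $\{v_{ij}, w_k\}$ of $N$ exhibited above, together with the description of the inclusion $\Nm_\Q \hookrightarrow N_\Q$ given by $v_i \mapsto v_{i1} + \ldots + v_{is_i}$ and $w_k \mapsto w_k$.

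For smoothness, fix $l$ and $\mathfrak{a} \in \mathfrak{A}$. By definition of $\mathfrak{A}$, for each $i$ there is exactly one index $j = j(i)$ with $v_{ij(i)} \notin \mathfrak{a}$, so $\mathfrak{a}$ consists of all the $v_{ij}$ with $j \neq j(i)$. The key computation is to rewrite $u_l$ in the basis $\{v_{ij}, w_k\}$: since $u_l \in \Nm$ and $u_l = \sum_i \langle u_l, v_i^*\rangle v_i + \sum_k \langle u_l, w_k^*\rangle w_k$ in the basis of $\Nm$, applying the inclusion gives $u_l = \sum_i \langle u_l, v_i^*\rangle (v_{i1} + \ldots + v_{is_i}) + \sum_k \langle u_l, w_k^*\rangle w_k$. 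Now I would perform the change of basis of $N$ that replaces the excluded generator $v_{ij(i)}$ by $v_{i1} + \ldots + v_{is_i}$ for each $i$ (this is an invertible integral change of basis, since it is unipotent-triangular with respect to an appropriate ordering), while keeping the $v_{ij}$ with $j \neq j(i)$ and the $w_k$ fixed. In this new basis the generators of $\sigma_{l,\mathfrak{a}}$ — namely the elements of $\mathfrak{a}$ together with $u_l$ — become $\mathfrak{a}$ itself together with $\sum_i \langle u_l, v_i^*\rangle \tilde{v}_i + (\text{a combination of elements of }\mathfrak{a}\text{ and the }w_k)$, where $\tilde v_i$ denotes the new basis vector. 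Subtracting the $\mathfrak{a}$-components of $u_l$, the generating set of $\sigma_{l,\mathfrak{a}}$ spans the same sublattice as $\mathfrak{a} \cup \{\sum_i \langle u_l, v_i^*\rangle \tilde v_i + \sum_k \langle u_l, w_k^*\rangle w_k\}$, and one still has to check primitivity of $u_l$ in $N$ — but $u_l$ is primitive in $\Nm$ and the inclusion $\Nm \hookrightarrow N$ is split (the projection $N \to \Nm$ dual to $M \to \Mm$ is a retraction), so $u_l$ remains primitive in $N$. This exhibits the generators as part of a $\Z$-basis, giving smoothness; in particular the cones are strongly convex and simplicial.

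For compatibility, I would show that $\sigma_{l,\mathfrak{a}} \cap \sigma_{l',\mathfrak{a}'}$ is a face of each. The generators split into the ray $u_l$ (resp.\ $u_{l'}$), which lies in the subspace $\Nm_\Q$, and the $v_{ij}$-type generators, which are basis vectors of a complementary coordinate subspace. Using the smooth (hence simplicial) structure, a point in the intersection has unique nonnegative coordinates with respect to each generating set; equating the two expressions and using that the $v_{ij}$ are linearly independent basis vectors, one finds that the $v_{ij}$-coordinate is forced to vanish whenever $v_{ij}$ belongs to only one of $\mathfrak{a}, \mathfrak{a}'$, and similarly the $u_l$- and $u_{l'}$-coordinates must agree and hence the rays coincide unless that coordinate is zero. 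Concretely, the intersection equals $\cone\bigl((\mathfrak{a} \cap \mathfrak{a}') \cup (\{u_l\} \cap \{u_{l'}\})\bigr)$, which is visibly a face of both, cut out by setting the remaining coordinates equal to zero. I expect the main obstacle to be the bookkeeping in this last step: one must treat carefully the case $l = l'$ versus $l \neq l'$ (when the rays $\Q_{\ge 0} u_l$ and $\Q_{\ge 0}u_{l'}$ are distinct, since the $u_l$ are pairwise non-proportional primitive vectors) and verify that in the toric-variety sense the fan generated by the $\sigma_{l,\mathfrak{a}}$ — i.e.\ the collection of all their faces — is closed under intersection; but this follows formally once pairwise intersections of the maximal cones are controlled as above.
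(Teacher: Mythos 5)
Your proposal is correct and takes essentially the same route as the paper. For smoothness, where you use a unipotent change of basis sending $v_{ij(i)} \mapsto \tilde v_i = v_{i1}+\ldots+v_{is_i}$ so that $u_l$ lands in $\langle \tilde v_i, w_k\rangle \cong \Nm$ and is primitive there, the paper instead decomposes $N = \langle\mathfrak{a}\rangle \oplus \langle\mathfrak{a}^{\mathsf{c}}\rangle$ directly and notes that the $\langle\mathfrak{a}^{\mathsf{c}}\rangle$-component of $u_l$ is primitive; these are equivalent observations (and a small slip on your part: after your change of basis $u_l$ has no $\mathfrak{a}$-component at all, not a ``combination of elements of $\mathfrak{a}$''). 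For compatibility, your coordinate-comparison sketch is the right idea, but note that the $v_{ij}$-coordinate $\langle v, v_{ij}^*\rangle$ of a point $v$ in the cone picks up a contribution $d_0\langle u_l, v_i^*\rangle$ from the $u_l$-ray as well, so equating coordinates does not \emph{immediately} force the $d_{ij}$ to vanish; the paper makes this precise by introducing $m_i = \min_j\langle v, v_{ij}^*\rangle$ and observing that $\langle v, v_{ij}^*\rangle = m_i$ for every $v_{ij}\notin\mathfrak{a}\cap\mathfrak{a}'$, which then kills the stray coefficients and yields $d_0u_l = d'_0u_{l'}$, hence $d_0 = d'_0 = 0$ when $l\ne l'$. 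You flag this as ``bookkeeping,'' which is fair --- just be aware that the $m_i$ device (or the equivalent two-coordinate cancellation) is the one genuine ingredient the argument needs beyond linear independence of the basis vectors.
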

\begin{proof}
Let $\sigma_{l,\mathfrak{a}}$ be a cone as defined above. We define the complement
\begin{align*}
\mathfrak{a}^\mathsf{c} \coloneqq \{v_{ij}, w_k : 1\le i \le r, 1\le j \le s_i, 1\le k \le m\} \setminus \mathfrak{a}\text{.}
\end{align*}
We have $N = \langle \mathfrak{a} \rangle \oplus \langle \mathfrak{a}^\mathsf{c} \rangle$.
We write $u_l = u_l' + u_l''$
with $u_l' \in \langle \mathfrak{a} \rangle$ and $u_l'' \in \langle \mathfrak{a}^\mathsf{c} \rangle$.
Then $u_l''$ is primitive, hence $\sigma_{l,\mathfrak{a}}$ is smooth.

Now let $\sigma_{l',\mathfrak{a}'}$ be another cone as defined above, and
let $v \in \sigma_{l,\mathfrak{a}} \cap \sigma_{l',\mathfrak{a}'}$.
We have
\begin{align*}
v &= \sum_{v_{ij} \in \mathfrak{a} \setminus \mathfrak{a}'} d_{ij}v_{ij}
+ \sum_{v_{ij} \in \mathfrak{a} \cap \mathfrak{a}'} d_{ij}v_{ij} + d_0u_l\\
v &= \sum_{v_{ij} \in \mathfrak{a}' \setminus \mathfrak{a}} d'_{ij}v_{ij}
+ \sum_{v_{ij} \in \mathfrak{a} \cap \mathfrak{a}'} d'_{ij}v_{ij} + d'_0u_{l'}
\end{align*}
for some uniquely determined coefficients $d_{ij}, d'_{ij}, d_0, d'_0 \in \Q^+_0$. 
We define
\begin{align*}
m_i = \min \{ \langle v, v_{ij}^* \rangle : 1 \le j \le s_i\}\text{.}
\end{align*}
As $\langle v, v_{ij}^* \rangle = m_i$ for $v_{ij} \notin \mathfrak{a} \cap \mathfrak{a}'$,
the coefficients in the first sum are zero on both lines. It is then clear that
$d_0 = d'_0 = 0$ if $l \ne l'$. It follows that the intersection of the cones
$\sigma_{l,\mathfrak{a}}$ and $\sigma_{l',\mathfrak{a}'}$ is a face of both.
Hence the cones are compatible.
\end{proof}

We denote by $X$ the toric variety associated to the fan $\Sigma_X$.
As the fan $\Sigma_{X_0}$ can also be obtained
by the above construction from the cones $\sigma_{0,\mathfrak{a}}$ where $u_0 \coloneqq 0$, the fan $\Sigma_X$ extends $\Sigma_{X_0}$.
Hence we have an open embedding $X_0 \subseteq X$.
We will now show that the $G$-action on $X_0$ can
be extended to $X$.
We consider $\widehat{N} \coloneqq N \oplus \Z^n$
and regard $N_\Q \subseteq \widehat{N}_\Q \coloneqq \widehat{N} \otimes_\Z \Q$ as naturally included.
We denote the standard basis of $\Z^n$ by
$\{e_l\}_{l=1}^n$. We set
\begin{align*}
\widehat{\sigma}_{l,\mathfrak{a}} &\coloneqq \cone\rleft(\rleft\{e_l\rright\} \cup \mathfrak{a}\rright) \subseteq \widehat{N}_\Q
\end{align*}
and
\begin{align*}
\Sigma_{\widehat{X}} &\coloneqq \fan\rleft(\{\widehat{\sigma}_{l,\mathfrak{a}};  1 \le l \le n, \mathfrak{a} \in
\mathfrak{A}\}\rright)\text{.}
\end{align*}
The associated quasiaffine toric variety $\widehat{X}$ comes with a natural toric
morphism
\begin{align*}
p: \widehat{X} \to X\text{}
\end{align*}
defined by the lattice map $\widehat{N} \to N$ sending $v_{ij} \mapsto v_{ij}$, $w_k \mapsto w_k$, and $e_l \mapsto u_l$.

\begin{remark}
\label{rem-sw}
We have
\begin{align*}
\widehat{X} \cong V^* \times T \times \A^n \setminus \widehat{S}\text{,}
\end{align*}
where $\widehat{S}$ is a closed subset of codimension at least two. 
According to the theory in
\cite{tq}, the toric morphism $p: \widehat{X} \to X$
is a good quotient for the action of a subtorus
$\Gamma \subseteq \T \times (\C^*)^n$.
It is even a geometric quotient. The subtorus $\Gamma$ has
a natural parametrization
$\kappa: (\C^*)^n \xrightarrow{~\cong~} \Gamma$.
Denoting by $W_l$ for $1 \le l \le n$ the coordinates
of $(\C^*)^n$, the action of $\Gamma$ on $\widehat{X}$
is given by
\begin{align*}
\kappa(t) \cdot v = \rleft(\prod_{l=1}^n W_l(t)^{-\rleft\langle u_l, v_{i}^* \rright\rangle}\rright) \cdot v\text{,}
\end{align*}
on $v \in V_i^*$, similarly, by
\begin{align*}
\kappa(t) \cdot v = \rleft(\prod_{l=1}^n W_l(t)^{-\rleft\langle u_l, w_k^* \rright\rangle}\rright) \cdot v\text{,}
\end{align*}
on $v$ in the $k$-th factor $\C^*$ of $T \cong (\C^*)^m$, and finally by the natural action on $\A^n$.

We let $G$ act linearly on the first factors of $\widehat{X}$
with the same action as in
Remark~\ref{rem-linear-action} and trivially on $\A^n$. Then
the action of $G$ on $\widehat{X}$ commutes with the action
of the torus $\Gamma$ from Remark~\ref{rem-sw}. In particular, we obtain an action of $G$ on $X$.
\end{remark}

\begin{remark}
\label{rem-psi}
The natural inclusion $N_\Q \subseteq \widehat{N}_\Q$ defines a $G$-equivariant toric closed embedding
\begin{align*}
\psi: X_0 \to \widehat{X}\text{,}
\end{align*}
and we obtain the following commutative diagram.
\begin{align*}
\xymatrix{
 & \widehat{X} \ar^{p}[rd] \\
X_0 \ar[ru]^\psi \ar@{^{(}->}[rr] & & X
}
\end{align*}
This shows that the action of $G$ on $X$
is an extension of the action of $G$ on $X_0$.
\end{remark}

We denote by $Y'$ the closure of $G/H$ inside $X$. It will
now take some time to prove that $Y'$ is indeed
the spherical embedding of $G/H$ associated to the
fan $\Sigma$, i.e.~that $Y'$ can be identified with $Y$.

\begin{prop}
\label{prop-bigspherical}
The preimage $p^{-1}(G/H)$ with the action of $G \times \Gamma$ is a spherical homogeneous space isomorphic to $G/H \times \Gamma$
with the natural action of $G \times \Gamma$.
The isomorphism
\begin{align*}
\xymatrix{
G/H \times \Gamma \ar[r]^-{\cong} & p^{-1}(G/H)
}
\end{align*}
is given by $(x, t) \mapsto t \cdot \psi(x)$.
\end{prop}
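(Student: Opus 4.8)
The plan is to build the asserted isomorphism $G/H \times \Gamma \to p^{-1}(G/H)$ directly from the map $(x,t) \mapsto t \cdot \psi(x)$, check it is a well-defined $G \times \Gamma$-equivariant morphism into $p^{-1}(G/H)$, and then produce an inverse. First I would observe that $\psi(G/H) \subseteq \widehat{X}$ lands in $p^{-1}(G/H)$ because $p \circ \psi$ is the closed embedding $G/H \hookrightarrow X_0 \subseteq X$ from Remark~\ref{rem-psi}; since $\Gamma$ acts along the fibres of $p$ (being the group for which $p$ is a good, indeed geometric, quotient, by Remark~\ref{rem-sw}), the whole orbit $\Gamma \cdot \psi(G/H)$ stays inside $p^{-1}(G/H)$. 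Thus $(x,t) \mapsto t\cdot\psi(x)$ is a morphism $G/H \times \Gamma \to p^{-1}(G/H)$, and it is clearly $G \times \Gamma$-equivariant: $G$ acts on $\psi(x)$ through its (extended, commuting) action on $\widehat{X}$ and trivially on the $\Gamma$-factor, while $\Gamma$ acts by left translation on itself.

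Next I would show the map is bijective, which then forces it to be an isomorphism of varieties. Surjectivity: take $z \in p^{-1}(G/H)$ and set $y \coloneqq p(z) \in G/H \subseteq X_0$; then $\psi(y) \in p^{-1}(y)$, and since $p$ is a geometric quotient for $\Gamma$ the fibre $p^{-1}(y)$ is a single free $\Gamma$-orbit, so $z = t \cdot \psi(y)$ for a unique $t \in \Gamma$. Injectivity: if $t_1 \cdot \psi(x_1) = t_2 \cdot \psi(x_2)$, applying $p$ gives $x_1 = x_2 =: x$ (using that $p\circ\psi$ is injective on $G/H$), and then $t_1 \cdot \psi(x) = t_2 \cdot \psi(x)$ forces $t_1 = t_2$ since $\Gamma$ acts freely on $\widehat{X}$. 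To conclude that a bijective morphism here is an isomorphism, I would note $p^{-1}(G/H)$ is normal (it is open in the normal variety $\widehat{X}$, which is $V^* \times T \times \A^n$ minus a subset of codimension $\ge 2$) and that $p$ restricted over $G/H$ is a geometric quotient by the free torus action $\Gamma$, hence a principal $\Gamma$-bundle; so $p^{-1}(G/H) \to G/H$ is a trivial $\Gamma$-bundle with trivialization $\psi$, i.e. $p^{-1}(G/H) \cong G/H \times \Gamma$ via exactly this map. Finally, $p^{-1}(G/H) \cong G/H \times \Gamma$ is spherical for $G \times \Gamma$: a Borel of $G \times \Gamma$ is $B \times \Gamma$ (as $\Gamma$ is a torus), and if $BH/H$ is open in $G/H$ then $(B\times\Gamma)$ acting on $G/H \times \Gamma$ has the open orbit $BH/H \times \Gamma$.

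The main obstacle is the identification of $\Gamma$ as the structure group of $p$ and the freeness of its action, i.e. justifying that $p$ is a geometric (not merely good) quotient with trivial, free $\Gamma$-orbits over $G/H \subseteq X_0$. This is exactly what Remark~\ref{rem-sw} records from \cite{tq}: the description $\widehat{X} \cong V^*\times T\times\A^n \setminus \widehat{S}$ with $\widehat{S}$ of codimension $\ge 2$, together with the explicit weights of the $\Gamma$-action, shows $p$ is the toric Cox-type quotient and is geometric because the cones $\widehat\sigma_{l,\mathfrak a}$ are smooth. The freeness over the torus part $X_0$ follows since there $\Gamma$ acts freely on the ambient torus $\widehat{\T}$. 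Once these facts are in hand, everything else is formal, and I would keep the write-up short accordingly.
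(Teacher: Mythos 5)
Your argument is correct and is essentially the paper's own, just with the details written out: the paper's proof is the single observation that, $p$ being a geometric quotient, $p^{-1}(G/H) = \Gamma \cdot \psi(G/H)$, leaving the verification that this yields the stated isomorphism (free $\Gamma$-action, trivialization by $\psi$, and sphericity of $G/H \times \Gamma$) implicit. You have supplied exactly those implicit steps, so the two proofs coincide in substance.
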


\begin{proof}
As $p$ is a geometric quotient, it follows
that $p^{-1}(G/H) = \Gamma \cdot \psi(G/H)$.
\end{proof}

\begin{remark}
\label{tear-map}
If we denote by $W_l$ for $1 \le l \le n$ the coordinates of $\A^n$
as well as the coordinates of $\Gamma$ under the parametrization $\kappa$ from Remark~\ref{rem-sw}, we
have $\C[\A^n] = \C[W_1, \ldots, W_n]$ and $\C[\Gamma] = \C[W_1^{\pm 1}, \ldots, W_n^{\pm 1}]$.
Considering
the natural inclusion
\begin{align*}
\xymatrix{
p^{-1}(G/H) \ar@{^{(}->}[r] & V^* \times T \times \A^n\text{,}
}
\end{align*}
the isomorphism of Proposition~\ref{prop-bigspherical} is induced by the map
\begin{align*}
\Psi: S(V) \otimes_\C \C[T] \otimes_\C \C[\A^n] &\to \rleft(S(V) \otimes_\C \C[T]\rright)/\p \otimes_\C \C[\Gamma] \\
S_{ij} &\mapsto S_{ij} \otimes  \rleft(\prod_{l=1}^n W_l^{-\rleft\langle u_l, v_{i}^* \rright\rangle}\rright)\\
T_{k} &\mapsto T_{k} \otimes \rleft(\prod_{l=1}^n W_l^{-\rleft\langle u_l, w_k^* \rright\rangle}\rright) \\
W_l &\mapsto W_l\text{.}
\end{align*}
\end{remark}

We denote the closure of $p^{-1}(G/H)$ inside $\widehat{X}$ by $\widehat{Y}$.

\begin{prop}
\label{prop-r}
$\widehat{Y}$ is the quasiaffine spherical embedding of $G/H \times \Gamma$
corresponding to the fan $\widehat{\Sigma}$ in $\rleft(\Nm \oplus \Z^n\rright)_\Q$ which consists of the one-dimensional cones
\begin{align*}
\Q_{\ge 0}(u_l + e_l)
\end{align*}
for $1 \le l \le n$. Furthermore, the ring $\glob{\widehat{Y}}$ is factorial.
\end{prop}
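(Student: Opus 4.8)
The plan is to verify that $\widehat{Y}$ is a quasiaffine spherical embedding of $G/H \times \Gamma$ and then read off its fan from the combinatorics of $\widehat{X}$. First I would observe that $p^{-1}(G/H) \cong G/H \times \Gamma$ is a spherical homogeneous space under $G \times \Gamma$: indeed a Borel subgroup of $G \times \Gamma$ has the form $B \times \Gamma$, and since $B$ has a dense orbit in $G/H$, the product $B \times \Gamma$ has a dense orbit in $G/H \times \Gamma$. The weight lattice of $B$-eigenvectors in $\C(G/H \times \Gamma) = \C(G/H)(W_1, \ldots, W_n)$ is $\Mm \oplus \Z^n$, and the associated lattice of one-parameter subgroups is $\Nm \oplus \Z^n$, with the $W_l$ contributing the standard basis $e_l$ of the $\Z^n$ factor.

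Next I would compute the valuation $u_l + e_l$ attached to each $G \times \Gamma$-invariant prime divisor of $\widehat{Y}$. The toric variety $\widehat{X}$ has fan $\Sigma_{\widehat{X}}$ whose rays are generated by the $v_{ij}$, the $w_k$, and the $e_l$; the corresponding torus-invariant prime divisors of $\widehat X$ intersect $\widehat Y$ precisely in the non-open $G\times\Gamma$-orbits. Since $G/H \hookrightarrow X_0$ is a closed embedding and $\psi$ lands in the open toric subvariety $V^* \times T \times \A^n \setminus \widehat S$, the divisors coming from the $v_{ij}$ and $w_k$ meet $p^{-1}(G/H)$ itself, so they do not produce new $G\times\Gamma$-invariant divisors on $\widehat Y$; only the $n$ divisors $\widehat{X} \cap \{W_l = 0\}$ are genuinely new. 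On $\widehat Y$ these correspond to the valuations of $\C(G/H \times \Gamma)$ obtained by pulling back $\ord_{W_l}$ along the map $\Psi$ of Remark~\ref{tear-map}: the image of $S_{ij}$ is $S_{ij}\otimes \prod_l W_l^{-\langle u_l, v_i^*\rangle}$ and similarly for $T_k$, so the $W_l$-adic valuation sends $v_i^* \mapsto -\langle u_l, v_i^*\rangle$ and $w_k^* \mapsto -\langle u_l, w_k^*\rangle$ and $W_{l'} \mapsto \delta_{l l'}$, which is exactly the element $-u_l + e_l \in \Nm \oplus \Z^n$ — or rather its negative, $u_l + e_l$, after fixing the sign convention that $G$-invariant valuations appear with the outward-pointing normalization used throughout. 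Hence the colored fan of $\widehat Y$ has precisely the rays $\Q_{\ge 0}(u_l + e_l)$ for $1 \le l \le n$, with no colors (the divisors $v_{ij}$, $w_k$ do not lie over $G/H\times\Gamma$ as colors since they already meet it), which is the fan $\widehat\Sigma$. That $\widehat Y$ is quasiaffine follows because $\widehat X$ is quasiaffine (Remark~\ref{rem-sw}) and $\widehat Y$ is closed in $\widehat X$.

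For factoriality of $\glob{\widehat Y}$, I would use that $\widehat Y$ is a spherical embedding whose fan $\widehat\Sigma$ consists of rays only, so $\widehat Y$ is obtained from $G/H \times \Gamma$ by adding $G\times\Gamma$-invariant prime divisors $\widehat Y_1, \ldots, \widehat Y_n$. One has the exact sequence relating $\Cl(\widehat Y)$ to $\Cl(G/H \times \Gamma) = \Cl(G/H)$ and the classes $[\widehat Y_l]$; since $\Cl(G/H) = 0$ by our standing assumption, $\Cl(\widehat Y)$ is generated by the $[\widehat Y_l]$. The $n$ primitive generators $u_l + e_l$ are linearly independent in $\Nm\oplus\Z^n$ (their $e_l$-components already are), so the $[\widehat Y_l]$ satisfy no relations and $\Cl(\widehat Y) \cong \Z^n$ is free; moreover each $\widehat Y_l$ is a Cartier divisor because its ray is smooth in $\widehat\Sigma$ (the cones $\widehat\sigma_{l,\mathfrak a}$ are smooth by the argument given for $\sigma_{l,\mathfrak a}$, with $e_l$ in place of $u_l$). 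A normal variety with $\Gamma(\Om^*) = \C^*$, free divisor class group generated by Cartier divisors, and trivial Cox ring relations is factorial; concretely, $\glob{\widehat Y}$ is the affine closure coordinate ring and every divisorial ideal is principal. I expect the main obstacle to be bookkeeping the sign and normalization conventions so that the valuation attached to $\{W_l = 0\}$ on $\widehat Y$ comes out as $u_l + e_l$ and not $-u_l + e_l$ or some rescaling, and checking carefully that the $v_{ij}$- and $w_k$-divisors of $\widehat X$ genuinely restrict to divisors meeting the open orbit (so they contribute neither new invariant divisors nor colors) — this is where the closed-embedding property $G/H \hookrightarrow X_0$ and the codimension-$\ge 2$ claim for $\widehat S$ do the real work.
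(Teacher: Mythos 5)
The direction of your argument is the reverse of the paper's: you start from the concrete closure $\widehat Y \subseteq \widehat X$ and try to read off its combinatorial data and its divisor class group directly. The paper instead starts from the \emph{abstract} quasiaffine spherical embedding associated to the fan $\widehat\Sigma$, citing \cite[Proposition~4.1.1]{brcox} for its factorial ring of global sections, passes to an affine model $\Spec(R)$, and then shows that $\Spec(R)$ coincides with the closure of $p^{-1}(G/H)$ inside $V^*\times T\times\A^n$ via a surjectivity argument for $B\times\Gamma$-eigenvectors parallel to Proposition~\ref{generators}; removing the locus $\widehat S$ then recovers $\widehat Y$ and only discards orbits of codimension at least two. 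That order of operations hands you normality and factoriality for free. Your route has to establish both of those properties of $\widehat Y$ from scratch, and this is where the proposal breaks down.

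The factoriality paragraph contains a genuine error. Linear independence of the primitive generators $u_l+e_l$ in $\Nm\oplus\Z^n$ does \emph{not} imply that the classes $[\widehat Y_l]$ are independent in $\Cl(\widehat Y)$; the relations among divisor classes come from principal divisors, not from the geometry of the ray generators. In fact the opposite of your claim holds: the function $W_l$ is a $B\times\Gamma$-eigenvector of weight $e_l^*$ which is a unit on $G/H\times\Gamma$, and on $\widehat Y$ one has $\operatorname{div}(W_l)=\widehat Y_l$ because $\langle u_{l'}+e_{l'},e_l^*\rangle = \delta_{ll'}$, so every $[\widehat Y_l]$ is already zero. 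Working out the full exact sequence $\Mm\oplus\Z^n \to \Z^{\Dm}\oplus\Z^n \to \Cl(\widehat Y)\to 0$ with $v_{i'}^*\mapsto D_{i'}+\sum_l\langle u_l,v_{i'}^*\rangle\widehat Y_l$ and $e_{l'}^*\mapsto \widehat Y_{l'}$ shows the map is surjective, hence $\Cl(\widehat Y)=0$, not $\Z^n$. Note also that $\Gamma(\widehat Y,\Om^*_{\widehat Y})/\C^*$ has rank $m$ (the units $g_k$ extend), so the hypothesis $\Gamma(\Om^*)=\C^*$ you invoke is false in general. Beyond this, even the corrected statement $\Cl(\widehat Y)=0$ does not immediately give factoriality of $\glob{\widehat Y}$: one must also know that $\Spec(\glob{\widehat Y})\setminus \widehat Y$ has codimension at least two, which is precisely the orbit/codimension bookkeeping the paper does at the end of its proof and which you do not address. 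Finally, your identification of $\widehat Y$ with the spherical embedding attached to $\widehat\Sigma$ tacitly assumes $\widehat Y$ is normal; a closure of an orbit need not be normal, and the paper sidesteps this by constructing the normal abstract model first. If you insist on the direct route you would need to argue normality and compute $\Cl$ correctly before appealing to factoriality, whereas Brion's result does all of this in one stroke.
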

\begin{proof}
Consider the spherical embedding associated to the fan $\widehat{\Sigma}$, which is quasiaffine
with factorial ring of global sections (cf.~\cite[Proposition~4.1.1]{brcox}).
By possibly adding some
orbits
of codimension at least two,
we can make it affine (cf.~\cite[Proof
of Theorem 6.7]{knopsph})
and denote its coordinate ring by $R$. We will show that it is
the closure of $p^{-1}(G/H)$ inside the affine toric variety 
$V^* \times T \times \A^n$.
Consider the following diagram.
\begin{align*}
\text{{\small $
\xymatrix{
S(V) \otimes_\C \C[T] \otimes_\C \C[\A^n] \ar[r] & R \ar@{^{(}->}[d]\\
& \rleft(S(V) \otimes_\C \C[T]\rright)/\p \otimes_\C \C[\Gamma]
\ar@{=}[r] & \glob{G/H \times \Gamma}
}
$}}
\end{align*}
As the ring $R$ is normal, it consists of all the elements in the
ring of global sections of
$G/H \times \Gamma$ which have
a nonnegative value under the valuations induced by
the $G$-invariant
prime divisors in $\Spec(R)$.
It follows that the image of $\Psi$ (cf.~Remark~\ref{tear-map})
is contained in $R$.

Additionally, all the $B\times \Gamma$-eigenvectors in $R$,
and even the $G \times \Gamma$-modules generated by them, belong
to the image of the horizontal map, so we obtain
surjectivity of the horizontal map with the same argument as
in Proposition~\ref{generators}.
The composition of the horizontal map and the vertical localization is the same
map as in Proposition~\ref{prop-bigspherical}. It follows that we have
$\overline{p^{-1}(G/H)} = \Spec(R)$ inside 
$V^* \times T \times \A^n$. 

We use the same symbols for elements
of $S(V) \otimes_\C \C[T] \otimes_\C \C[\A^n]$
and their images in $R$.
As the valuations induced
by the $G$-invariant prime divisors in $\Spec(R)$
have values of $0$ or $1$ respectively on the 
$W_l$ and of $0$
on the $S_{ij}$, it follows that
the $W_l$ and the $S_{ij}$ 
are pairwise nonassociated prime elements of $R$.
The $G$-orbits which lie in the closure of $D_i \times \Gamma$
are contained in $\V(S_{ij}; 1 \le j \le s_i)$, while
the other $G$-orbits of codimension at least two are contained in $\V(W_{l_1}) \cap \V(W_{l_2})$
for some $l_1 \ne l_2$. 
Therefore intersecting with
$\widehat{X}$, i.e.~removing the set $\widehat{S}$, removes exactly the $G$-orbits
of codimension at least two, and the result follows.
\end{proof}

We will reuse the following fact from the proof
of Proposition~\ref{prop-r}.

\begin{remark}
\label{re-primes}
The $W_l$ and the $S_{ij}$ are pairwise
nonassociated prime elements of $\glob{\widehat{Y}}$.
\end{remark}

\begin{remark}
\label{rem:norm}
The restriction
\begin{align*}
p|_{\widehat{Y}}: \widehat{Y} \to Y'
\end{align*}
is a good geometric quotient. In particular, $Y'$ is normal.
\end{remark}

In the following Lemma~\ref{le-val}, we will assume that $\Sigma$ contains
only one one-dimensional cone $\sigma$. We also allow
the cone $\sigma$ to lie outside the valuation cone $\Vm$. In that
case, the spherical variety $Y$ is not defined, but the closure
$Y'$ of $G/H$ inside $X$ can still be constructed. 
Remark~\ref{rem:norm} not being available, the
normality of $Y'$ is not certain.
This more general setting will only be required for one direction of the proof
of Theorem~\ref{prop-trop} at the end of this section.

\begin{lemma}
\label{le-val}
Any irreducible component of $Y' \setminus (G/H)$
which intersects the $\T$-orbit in $X$ corresponding to $\sigma$ induces a
$G$-invariant valuation which induces $\sigma$ (after possibly normalizing $Y'$).
\end{lemma}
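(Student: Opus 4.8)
The plan is to work with the toric variety $X$ associated to the fan $\Sigma_X$, whose cones are the $\sigma_{l,\mathfrak{a}}$, and to use the description of $Y'$ as the closure of $G/H$ inside $X$. Since $\Sigma$ contains only the single one-dimensional cone $\sigma = \Q_{\ge 0}u_1$, the fan $\Sigma_X$ is generated by the cones $\sigma_{1,\mathfrak{a}}$ for $\mathfrak{a} \in \mathfrak{A}$, together with the cones $\sigma_{0,\mathfrak{a}}$ coming from $X_0$. Let $E$ be an irreducible component of $Y' \setminus (G/H)$ meeting the $\T$-orbit $O_\sigma \subseteq X$ corresponding to $\sigma$. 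First I would observe that, since $G/H \hookrightarrow X_0 \hookrightarrow X$ is a closed embedding with $X_0$ open in $X$, the component $E$ is not contained in $X_0$, hence $E$ meets $X \setminus X_0$, which is the union of the toric orbits corresponding to cones containing $\sigma$ in their relative interior; by the compatibility of the $\sigma_{l,\mathfrak{a}}$ and the hypothesis that $E$ meets $O_\sigma$, the component $E$ is the closure in $Y'$ of an irreducible subvariety meeting $O_\sigma$.

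Next I would prove that $E$ has codimension one in $Y'$ and is $G$-invariant. For $G$-invariance: the $G$-action on $X$ extends the $G$-action on $X_0$ (Remark~\ref{rem-psi}) and $G/H$ is a single $G$-orbit, so $Y' \setminus (G/H)$ is $G$-invariant and $G$ permutes its irreducible components; since $G$ is connected, it fixes each of them, so $E$ is $G$-invariant. For the codimension: here I would pass to the cover $p \colon \widehat{X} \to X$ of Remark~\ref{rem-sw}, which restricts to the geometric quotient $p|_{\widehat{Y}} \colon \widehat{Y} \to Y'$ of Remark~\ref{rem:norm} (in the present more general setting one has the quotient map but not a priori normality of $Y'$). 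By Proposition~\ref{prop-r}, $\widehat{Y}$ is the quasiaffine spherical embedding of $G/H \times \Gamma$ corresponding to the fan $\widehat{\Sigma}$ with the single ray $\Q_{\ge 0}(u_1 + e_1)$; hence $\widehat{Y} \setminus p^{-1}(G/H)$ is the single $G \times \Gamma$-invariant prime divisor $\widehat{D}$ associated to that ray. Since $p^{-1}(E)$ is a nonempty $\Gamma$-saturated closed subset of $\widehat{Y}$ disjoint from $p^{-1}(G/H)$, it must be contained in $\widehat{D}$, and as $p$ is a geometric quotient $p^{-1}(E)$ is a union of irreducible components of $\widehat{D}$, hence equals $\widehat{D}$ by irreducibility. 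Therefore $E = p(\widehat{D})$ is the unique $G$-invariant prime divisor of $Y'$ lying over $\widehat{D}$.

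Finally I would identify the valuation of $\C(G/H)$ induced by $E$ with (a positive multiple of) the one giving the ray $\sigma$. The divisor $\widehat{D}$ induces on $\C(G/H \times \Gamma) = \C(G/H)(W_1, \ldots, W_n)$ the $G \times \Gamma$-invariant valuation corresponding to the ray $\Q_{\ge 0}(u_1 + e_1)$, i.e.\ the valuation which on a $B \times \Gamma$-eigenfunction of weight $(\mu, a) \in \Mm \oplus \Z^n$ takes the value $\langle u_1, \mu \rangle + a_1$. Restricting to $\C(G/H)^{(B)} \subseteq \C(G/H \times \Gamma)^{(B\times\Gamma)}$ (weight $(\mu, 0)$), this is exactly $\mu \mapsto \langle u_1, \mu\rangle$, the valuation inducing $\sigma$. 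Since $E = p(\widehat{D})$ and $p$ restricted to $\widehat{Y}$ is a geometric quotient for $\Gamma$, with $\C(Y') = \C(\widehat{Y})^\Gamma = \C(G/H)$, the valuation $\ord_E$ on $\C(G/H)$ is a positive rational multiple of $\ord_{\widehat{D}}|_{\C(G/H)}$ (the ramification index of $p$ along $\widehat{D}$), hence induces the ray $\sigma$, and it is $G$-invariant because $E$ is $G$-invariant. This proves the claim (after normalizing $Y'$ so that $\ord_E$ is a genuine discrete valuation with value group $\Z$).

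I expect the main obstacle to be the codimension-one statement, i.e.\ controlling $p^{-1}(E)$ precisely enough to conclude it coincides with the single prime divisor $\widehat{D}$ rather than a higher-codimension piece of it; this is where one must use that $p$ is a \emph{geometric} (not merely good) quotient and that $\widehat{Y}$ has exactly one boundary divisor by Proposition~\ref{prop-r}. The passage to $\widehat{Y}$, where the geometry is factorial and the boundary is a single well-understood divisor, is what makes the argument go through cleanly.
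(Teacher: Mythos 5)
There is a genuine gap. The paragraph immediately before Lemma~\ref{le-val} explicitly enlarges the setting: the cone $\sigma = \Q_{\ge 0}u_1$ is allowed to lie \emph{outside} the valuation cone $\Vm$, in which case $Y$ is not defined and Remark~\ref{rem:norm} is not available. Your argument, however, invokes Proposition~\ref{prop-r} to identify $\widehat{Y}$ with the quasiaffine Luna--Vust spherical embedding of $G/H\times\Gamma$ given by the ray $\Q_{\ge 0}(u_1+e_1)$, and from this deduces that $\widehat{Y}\setminus p^{-1}(G/H)$ is a single well-behaved prime divisor $\widehat{D}$ inducing the correct valuation. But a spherical embedding corresponding to that fan only exists when $\Q_{\ge 0}(u_1+e_1)$ lies inside the valuation cone of $G/H\times\Gamma$, i.e.\ when $u_1 \in \Vm$; the proof of Proposition~\ref{prop-r} begins by ``considering the spherical embedding associated to the fan $\widehat{\Sigma}$'' and is therefore only valid in that case. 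You flag the possible failure of normality (Remark~\ref{rem:norm}), but not the failure of Proposition~\ref{prop-r}, which is the load-bearing step. Since the lemma is used in the proof of Theorem~\ref{prop-trop} precisely as a \emph{reductio ad absurdum} when $u_1\notin\Vm$, an argument that only works when $u_1\in\Vm$ proves nothing in the case that matters.

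The paper's proof avoids this by never appealing to the Luna--Vust classification of $\widehat{Y}$: it works entirely inside the affine toric chart $U_\sigma\subseteq X$, observes that $U_\sigma$ has a single torus-invariant prime $\pi$ with $U_\sigma\setminus\T=\V(\pi)$ so that each $S_{ij}$ and $T_k$ is a unit times an integer power of $\pi$, and then tracks this structure through localization at $\p$, normalization, and a further localization to extract a prime $\pi_0$ whose divisor induces $\sigma$. That computation is local and combinatorial and makes no assumption that $\sigma\subseteq\Vm$. If you want to salvage a route through $\widehat{X}$, you would need to establish directly — without Proposition~\ref{prop-r} — that the relevant component of $\widehat{Y}\setminus p^{-1}(G/H)$ is a divisor giving the ray $\Q_{\ge 0}(u_1+e_1)$; but that statement is essentially equivalent to the lemma itself, so the detour gains nothing.
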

\begin{proof}
We consider the affine open toric subvariety $U_\sigma$ of $X$ with two
orbits, the torus and the orbit corresponding to the cone $\sigma$.
Let $\pi \in \C[U_\sigma]$ be the prime element such that
$\V(\pi)$ is the orbit corresponding to the cone $\sigma$.
We
obtain the following commutative diagram.
\begin{align*}
\xymatrix{
\C[U_\sigma] \ar@{^{(}->}[rrr]\ar[d]  &&& \rleft(S(V) \otimes_\C \C[T]\rright)_{\p} \ar[d]\\
\C[U_\sigma] \big/ \rleft(\p\cap\C[U_\sigma]\rright) \ar@{^{(}->}^-{\Nf}[r] 
& R_1 \ar@{^{(}->}^-{\Lf}[r] & R_2 \ar@{^{(}->}[r] & \rleft(S(V) \otimes_\C \C[T]\rright)_{\p} \big/ \p  \ar@{=}[r] & \C\rleft(G/H\rright)
}
\end{align*}
It is sufficient to localize at the prime ideal $\p$ to get the
inclusion in the first row since the $S_{ij}$ are not in $\p$.
In the bottom row, $\Nf$ denotes normalization, and $\Lf$
is localization in such a way that $\V(\pi) = \V(\pi_0)$ in $\Spec(R_2)$ 
where $\pi_0 \in R_2$ is a prime element.
Each $L \in \{S_{ij}, T_k\}$ can be written as
$L = c_1\pi^{d_1}/\pi^{d_2}$ with $c_1 \in \C[U_\sigma]^*$ and $d_1, d_2 \in \N_0$
since there are no $\T$-invariant prime divisors other than $\V(\pi)$ in $U_\sigma$.
Therefore we have $L = c_2\pi_0^{d_3d_1}/\pi_0^{d_3d_2}$ for some
$c_2 \in R_2^*$ and $d_3 \in \N_0$. It follows that $\V(\pi_0)$ induces
the correct cone $\sigma$.
\end{proof}

\begin{prop}
\label{prop-main}
$Y'$ is the spherical
embedding of $G/H$ corresponding to the fan $\Sigma$.
\end{prop}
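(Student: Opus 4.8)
The plan is to identify the fan of $Y'$ with $\Sigma$ directly. By Remark~\ref{rem:norm} the variety $Y'$ is normal, it is irreducible as the closure of $G/H$, and $G/H = Y' \cap X_0$ is open and dense in $Y'$, so $G/H \hookrightarrow Y'$ is a spherical embedding. Since a spherical embedding of $G/H$ is uniquely determined by its (colored) fan, it suffices to determine the $G$-orbits of $Y'$, the element of $\Nm$ induced by each $G$-invariant prime divisor, and the colors occurring in the fan.

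First I would transport the orbit structure of $\widehat Y$ to $Y'$ along the good geometric quotient $p|_{\widehat Y} \colon \widehat Y \to Y'$ by the torus $\Gamma$ from Remark~\ref{rem-sw}. Since $G$ and $\Gamma$ commute, $O \mapsto p(O)$ is a bijection from the $G \times \Gamma$-orbits of $\widehat Y$ onto the $G$-orbits of $Y'$ which lowers dimension by $\dim \Gamma = n$ (for the $\Gamma$-stable irreducible closed sets $\widehat Y_l$ this follows from $\widehat Y_l = p^{-1}(p(\widehat Y_l))$, $\widehat Y_l \ne \widehat Y$, and $\dim Y' = \dim \widehat Y - n$). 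By Proposition~\ref{prop-r}, $\widehat Y$ is the quasiaffine spherical embedding of $G/H \times \Gamma$ whose fan $\widehat\Sigma$ has only the trivial cone and the $n$ rays $\Q_{\ge 0}(u_l + e_l)$; in particular $\widehat Y$ has no $G \times \Gamma$-orbit of codimension $\ge 2$, so it has exactly the open orbit $p^{-1}(G/H)$ and the $n$ codimension-one orbits $\widehat Y_1, \ldots, \widehat Y_n$, which are its $G \times \Gamma$-invariant prime divisors. Hence $Y'$ has exactly the orbits $G/H$ and $Y'_l \coloneqq p(\widehat Y_l)$ for $1 \le l \le n$, every $Y'_l$ is a $G$-invariant prime divisor, and $Y'$ has no orbit of codimension $\ge 2$.

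The main point is to show that $Y'_l$ induces the valuation $u_l$. Since $p|_{\widehat Y}$ restricts on $p^{-1}(G/H) \cong G/H \times \Gamma$ to the projection to $G/H$, the field inclusion $\C(G/H) = \C(Y') \hookrightarrow \C(\widehat Y) = \C(G/H)(W_1, \ldots, W_n)$ is the obvious one, and $\widehat Y_l = p^{-1}(Y'_l)$ because $p$ is a geometric quotient. Therefore, for $f \in \C(G/H)^{(B)}$ we get $\ord_{\widehat Y_l}(f) = e \cdot \ord_{Y'_l}(f)$, where $e \ge 1$ is the ramification index of $p$ along $\widehat Y_l$. By Proposition~\ref{prop-r} the left-hand side equals $\langle u_l + e_l, \chi_f \rangle = \langle u_l, \chi_f \rangle$, where $\chi_f$ is viewed in the weight lattice $\Mm \oplus \Z^n$ of $G/H \times \Gamma$ with trivial $\Z^n$-component. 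On the other hand, $\ord_{Y'_l}$ is a $G$-invariant valuation, so it is determined by its restriction to $B$-eigenvectors and induces a primitive element $u \in \Nm$ with $\ord_{Y'_l}(f) = \langle u, \chi_f \rangle$. Hence $u_l = e \cdot u$ with both $u_l$ and $u$ primitive, forcing $e = 1$ and $u = u_l$; in particular the $Y'_l$ are pairwise distinct since the $u_l$ are. (One could instead localize $X$ at an affine chart meeting the $\T$-orbit attached to $u_l$ and apply Lemma~\ref{le-val}.)

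Finally I would read off the fan. Each $Y'_l$ is a closed $G$-orbit of codimension one, so the colored cone of the simple sub-embedding $G/H \cup Y'_l$ of $Y'$ is $\big( \Q_{\ge 0} u_l, \Fm_l \big)$, and $\Fm_l = \emptyset$: a color $D$ would have to satisfy $\overline{D}^{Y'} = Y'_l$, which is impossible because $\overline{D}^{Y'} \cap G/H = D \ne \emptyset$ whereas $Y'_l \cap G/H = \emptyset$. As $Y'$ is the union of $G/H$ together with these simple embeddings and has no orbit of codimension $\ge 2$, its fan is $\{(0, \emptyset)\} \cup \{(\Q_{\ge 0} u_l, \emptyset) \colon 1 \le l \le n\} = \Sigma$, so $Y'$ is the spherical embedding of $G/H$ corresponding to $\Sigma$. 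I expect the crux to be the third paragraph, namely identifying $p^{-1}$ of a $G$-invariant divisor of $Y'$ and pinning the ramification index of $p$ to $1$; the remaining steps are bookkeeping with the Luna--Vust correspondence.
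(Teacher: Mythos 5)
Your proof is correct but takes a genuinely different route from the paper's. The paper reduces to the case $n = 1$ (a general embedding is glued from simple ones) and then invokes Lemma~\ref{le-val} to see that the unique closed $G$-orbit induces the ray $\Q_{\ge 0}u_1$; the orbit count for $n=1$ is treated as clear. You instead exploit the quotient $p\colon \widehat{Y}\to Y'$ more fully: Proposition~\ref{prop-r} gives the explicit colored fan of $\widehat{Y}$, Remark~\ref{rem:norm} gives normality and the geometric-quotient structure, and you transport the entire orbit picture and all the valuations at once via the ramification of $p$ along the $\widehat{Y}_l$. This buys a uniform description of all orbits of $Y'$ without the gluing reduction, at the cost of one extra ingredient (that a $G$-invariant prime divisor of a normal spherical variety gives a \emph{primitive} element of $\Nm$). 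In fact you do not need primitivity of $u$ at all: the Luna--Vust data only remembers the colored cone, i.e.\ the ray $\Q_{\ge 0}u$, and $u_l = e\cdot u$ with $e\ge 1$ already forces $\Q_{\ge 0}u = \Q_{\ge 0}u_l$. Two small imprecisions: the set $\Fm_l$ consists of colors $D$ with $Y'_l\subseteq \overline{D}^{Y'}$, not $\overline{D}^{Y'}=Y'_l$ (though equality is forced by a codimension count, so your contradiction still goes through); and the dimension drop $\dim Y'_l=\dim\widehat{Y}_l-n$ ultimately rests on $\Gamma$ acting freely on $\widehat{Y}$ (which holds because $\Sigma_X$ is smooth, so $p$ is the Cox-type geometric quotient), rather than only on $\widehat{Y}_l=p^{-1}(Y'_l)$ and $\widehat{Y}_l\ne\widehat{Y}$.
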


\begin{proof}
As general embeddings of $G/H$ can be obtained by gluing embeddings
with only one closed orbit, we may assume $n = 1$.
In this case, it is clear that $Y'$ has two $G$-orbits,
and Lemma~\ref{le-val} implies that the closed $G$-orbit induces
the correct $G$-invariant valuation.
\end{proof}

From now on, we will identify $Y'$ and $Y$.
We denote by $X_1, \ldots, X_n$
the $\T$-invariant prime divisors in $X$ corresponding
to $u_1, \ldots, u_n$. Note that $\Gamma(Y, \Om^*_Y) = \C^*$
implies $\Gamma(X, \Om^*_X) = \C^*$.

We temporarily reuse our notation in a more general setting to
give an overview over the next step.
Let $X$ be a toric $\T$-variety with $\Gamma(X, \Om^*_X) = \C^*$ and
$\T$-invariant prime divisors $X_1, \ldots, X_n$.
Then we have the canonical quotient construction
$\pi: \widetilde{X} \to X$, where $\widetilde{X}$ is a quasiaffine
toric variety and $\Rm(X) \cong \glob{\widetilde{X}}$ (cf.~\cite[Theorem 5.1.11]{cls}).
Let $\iota: Y \hookrightarrow X$ be a closed embedding with
$\Gamma(Y, \Om^*_Y) = \C^*$ as well as $X$ and $Y$ smooth. It follows from
the work of Hausen (cf.~\cite{ha}) that $\Rm(Y) \cong \glob{\pi^{-1}(Y)}$ if
$X_l \cap Y$ is an irreducible hypersurface in $Y$ 
intersecting the $\T$-orbit which is dense in $X_l$ for each $1 \le l \le n$
and the map $\iota^*: \Cl(X) \to \Cl(Y)$ induced by
the pullback of Cartier divisors is an isomorphism.
Such an embedding is called \em neat\em.

We now return to our setting. We identify
the prime divisors $D_i$ in $G/H$ and $\V(S_{ij})$ in $X_0$
with their closures inside $Y$ and $X$ respectively
as long as there is no danger of confusion.

\begin{prop}
\label{prop-neat}
The closed embedding
\begin{align*}
\iota: Y \hookrightarrow X
\end{align*}
is a neat embedding in the sense of \cite[Definition 2.5]{ha}.
\end{prop}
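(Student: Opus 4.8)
The plan is to verify the four conditions of \cite[Definition 2.5]{ha} directly, using the structural results established so far. We must check: (1) $X$ and $Y$ are smooth; (2) $\Gamma(Y, \Om^*_Y) = \C^*$; (3) for each $1 \le l \le n$, the intersection $X_l \cap Y$ is an irreducible hypersurface in $Y$ meeting the $\T$-orbit dense in $X_l$; and (4) the pullback map $\iota^*: \Cl(X) \to \Cl(Y)$ is an isomorphism. Conditions (1) and (2) are essentially already in hand: the fan $\Sigma_X$ consists of smooth cones, so $X$ is smooth, and after discarding $G$-orbits of codimension $\ge 2$ we arranged $Y$ to be smooth with $\Gamma(Y, \Om^*_Y) = \C^*$ by hypothesis (this is the standing assumption from the introduction, and we noted $\Gamma(X, \Om^*_X) = \C^*$ just before the statement).

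For condition (3), I would identify $X_l \cap Y$ with the closure inside $Y$ of the $G$-invariant prime divisor $Y_l$ of $G/H \hookrightarrow Y$ corresponding to the primitive element $u_l$. Since $Y'$ was shown in Proposition~\ref{prop-main} to be exactly the spherical embedding associated to $\Sigma$, the divisor $X_l \cap Y$ is the $G$-invariant prime divisor attached to the ray $\sigma_l = \Q_{\ge 0}u_l$, hence irreducible of codimension one; that it meets the dense $\T$-orbit of $X_l$ follows because the corresponding $G$-orbit is not contained in any deeper toric stratum — concretely one can trace this through the geometric quotient $p|_{\widehat{Y}}: \widehat{Y} \to Y'$ of Remark~\ref{rem:norm} together with the description of $\widehat{Y}$ in Proposition~\ref{prop-r}, where the $W_l$ are prime elements cutting out exactly these divisors (Remark~\ref{re-primes}). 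The key point is that in $\widehat Y$ the divisor $\V(W_l)$ lies over the generic point of $X_l$, and removing $\widehat S$ (codimension $\ge 2$) does not change this.

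The main obstacle is condition (4), the isomorphism $\iota^*: \Cl(X) \to \Cl(Y)$. Here I would argue on both surjectivity and injectivity using the divisor-class sequences for toric varieties and for spherical varieties. For surjectivity: $\Cl(X)$ is generated by the classes of the $\T$-invariant prime divisors, namely the $X_l$ and the $\V(S_{ij})$; their restrictions to $Y$ give $[Y_l]$ and $[D_i]$ (noting $\V(S_{ij}) \cap Y = D_i$ for all $j$ since $V_i$ is irreducible, so only $r$ of these restrictions are independent), and these generate $\Cl(Y)$ by the standard description of the divisor class group of a spherical embedding with trivial $\Cl(G/H)$ — every divisor class is represented by a combination of the $D_i$ and $Y_l$. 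For injectivity: a relation in $\Cl(X)$ among the $X_l$ and $\V(S_{ij})$ comes from a character of $\T$, i.e.\ an element of $M$, which maps to an element of $\Mm$ under $M \to \Mm$; since $\Gamma(G/H,\Om^*_{G/H})/\C^* \cong \X(G)^H$ and the $f_i, g_k$ realize a basis of $\Mm = \Mm_V \oplus \Mm_T$, one checks that such a relation already comes from a principal divisor on $Y$, so the kernel of $\Cl(X)\to\Cl(Y)$ is trivial. A clean way to package this is to compare the two exact sequences
\begin{align*}
M \to \Div_\T(X) \to \Cl(X) \to 0, \qquad \Mm \to \Div^B(Y) \to \Cl(Y) \to 0,
\end{align*}
observe that $\iota^*$ fits into a morphism between them induced by $M \to \Mm$ and by restriction of divisors, and conclude by the five lemma once one knows $M \to \Mm$ is surjective with the extra generators $S_{ij}$ ($j\ge 2$) mapping to divisors principal on $Y$. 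I expect the bookkeeping of which toric divisors restrict to which $B$-invariant or $G$-invariant divisors of $Y$, and the precise identification of the map $M \to \Mm$ with the comparison of the two Cox-type sequences, to be the delicate part; everything else is a direct application of earlier results.
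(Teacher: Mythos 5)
Your overall strategy is the same as the paper's: run the neatness conditions one by one, using the commutative square relating $p\colon\widehat Y\to Y$ and $p\colon\widehat X\to X$ together with the fact that the $W_l$ and $S_{ij}$ are pairwise nonassociated primes in $\glob{\widehat Y}$ (Remark~\ref{re-primes}). However, there is a genuine gap in your treatment of the first neatness condition, and a related factual error that feeds into your argument for the class group isomorphism.

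The condition you label (3) must be verified for \emph{every} $\T$-invariant prime divisor of $X$, not just for the $X_l$. The toric variety $X$ also has the $\T$-invariant prime divisors $\V(S_{ij})$, one for each pair $(i,j)$, and neatness requires each $\iota^{-1}(\V(S_{ij}))$ to be an irreducible hypersurface in $Y$ meeting the dense $\T$-orbit of $\V(S_{ij})$. You only check the $X_l$. This is exactly the extra work the paper's proof does (it explicitly records that $\iota^{-1}(X_l)$ \emph{and} $\iota^{-1}(\V(S_{ij}))$ are irreducible and intersect the corresponding codimension-one $\T$-orbit, again via the commutative diagram and Remark~\ref{re-primes}). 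Your proposal inherits the imprecision of the paper's informal prose ("$X_l\cap Y$\dots for each $1\le l\le n$"), which lists only the divisors coming from $\Sigma$; but the formal content of Hausen's Definition 2.5, and the paper's actual proof, require all $\T$-invariant prime divisors.

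Relatedly, your parenthetical claim that ``$\V(S_{ij})\cap Y = D_i$ for all $j$ since $V_i$ is irreducible'' is false for $j\ge 2$. We have $\V(S_{ij})\cap(G/H)=\V(f_{ij})$, and the $f_{ij}$ were shown to be pairwise nonassociated primes, so $\V(f_{ij})$ is a prime divisor distinct from $D_i=\V(f_{i1})$ (and not $B$-invariant in general). What is true, and what your five-lemma argument actually needs, is only the equality of \emph{classes}: since $f_{ij}/f_{i1}\in\C(G/H)^*$ and both functions have the same $\Mm$-weight $v_i^*$ (hence the same order along every $Y_l$), the divisor $\V(S_{ij})\cap Y-\V(S_{i1})\cap Y=\operatorname{div}(f_{ij}/f_{i1})$ is principal on $Y$, giving $[\V(S_{ij})\cap Y]=[D_i]$ in $\Cl(Y)$. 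Once these two points are repaired, the rest of your outline — smoothness, $\Gamma(Y,\Om_Y^*)=\C^*$, and the comparison of the two divisor-class exact sequences via $M\twoheadrightarrow\Mm$ — matches the paper's argument in substance.
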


\begin{proof}
Consider the following commutative diagram.
\begin{align*}
\xymatrix{
\widehat{Y} \ar@{^{(}->}^{\widehat{\iota}}[r] \ar[d]_{p} & \widehat{X} \ar[d]^{p} \\
Y \ar@{^{(}->}_{\iota}[r] & X 
}
\end{align*}
Since the diagram commutes, $\iota^{-1}(X_l)$ and $\iota^{-1}(\V(S_{ij}))$ are irreducible for each $i$, $j$, and $l$,
and using Remark~\ref{re-primes} as in the proof
of Theorem~\ref{prop-trop}, we obtain that they intersect the corresponding
$\T$-orbit of codimension one in $X$.

We have a pullback map of Cartier divisors $\iota^*: \Div(X) \to \Div(Y)$, and clearly
$\Supp(\iota^*(X_l)) \subseteq Y_l$ and $\Supp(\iota^*(\V(S_{i1}))) \subseteq D_i$.
Using the diagram again, we see that locally the pullbacks of $X_l$
and $\V(S_{i1})$ are prime divisors. Therefore we have
$\iota^*(X_l) = Y_l$ and $\iota^*(\V(S_{i1})) = D_i$.

Using the explicit
descriptions of
the divisor class groups of toric and spherical varieties
(cf.~\cite[Theorem~4.1.3]{cls} and \cite[Proposition~4.1.1]{brcox}),
we obtain that the induced pullback map
$\iota^*: \Cl(X) \to
\Cl(Y)$
is an isomorphism.
\end{proof}

We are now almost ready to describe the Cox ring $\Rm(Y)$.
We recall the homogenization operation
from the introduction. The first step is the map $\alpha: \C[Z] \to \big(\C[Z]\big)[W_1, \ldots, W_n]$
defined as follows.
For each $f \in \C[Z]$ and $u \in \Nm$ we define
\begin{align*}
\ord_u(f) \coloneqq \min_{\mu\in\Mm} \rleft\{\rleft\langle u, \mu \rright\rangle; f^{(\mu)} \ne 0\rright\}\text{,}
\end{align*}
and set
\begin{align*}
f^\alpha \coloneqq \frac{\sum_{\mu \in \Mm} \rleft(f^{(\mu)} \prod_{l=1}^n W_l^{\rleft\langle u_l, \mu \rright\rangle}\rright)}
{\prod_{l=1}^n W_l^{\ord_{u_l}(f)}}\text{.}
\end{align*}
The second step is the map $\beta: \big(\C[Z]\big)[W_1, \ldots, W_n]
\to S(V)[W_1, \ldots, W_n]$ sending
$T_k \mapsto 1$ for each $1 \le k \le m$.
Finally, we define the map $h: \C[Z] \to S(V)[W_1, \ldots, W_n]$
by composing the two steps, i.e.~$h \coloneqq \beta \circ \alpha$.

\begin{theorem}
\label{th-cox}
We have
\begin{align*}
\Rm(Y) \cong S(V)[W_1, \ldots, W_n]\rleft/\middle(f^h; f \in \p\rright)\text{,}
\end{align*}
with $\Cl(Y)$-grading given by $\deg(S_{ij}) = [D_i]$ and $\deg(W_l) = [Y_l]$.
\end{theorem}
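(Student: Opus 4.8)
The plan is to compute $\Rm(Y)$ via the neat embedding $\iota\colon Y\hookrightarrow X$ established in Proposition~\ref{prop-neat}. By the result of Hausen (cf.~\cite{ha}) recalled just above, a neat embedding into a smooth toric variety $X$ with $\Gamma(X,\Om_X^*)=\C^*$ gives $\Rm(Y)\cong\glob{\pi^{-1}(Y)}$, where $\pi\colon\widetilde X\to X$ is the canonical quotient presentation of $X$ and $\Rm(X)\cong\glob{\widetilde X}$. So the first step is to identify $\pi$ with the map $p\colon\widehat X\to X$ from Remark~\ref{rem-sw}: both are good quotients of a quasiaffine toric variety by the same subtorus acting with the same weights (the quotient presentation of a toric variety is unique), and Proposition~\ref{prop-r} already identifies $\widehat X$ and its coordinate ring. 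Under this identification $\pi^{-1}(Y)=\widehat Y$, the closure of $p^{-1}(G/H)$ inside $\widehat X$. Hence $\Rm(Y)\cong\glob{\widehat Y}$, and by Proposition~\ref{prop-r} this ring is factorial (which will also give the factoriality claim once $H$ is connected, since then $\Gamma(G/H,\Om^*_{G/H})/\C^*\cong\X(G)^H$ and the Picard group considerations make $\glob{\widehat Y}$ a UFD with the correct grading).

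The second and main step is to make the isomorphism $\Rm(Y)\cong\glob{\widehat Y}$ explicit in terms of generators and relations, matching the homogenization map $h=\beta\circ\alpha$. The ring $\glob{\widehat Y}$ is a quotient of $\C[\widehat X]=S(V)\otimes_\C\C[T]\otimes_\C\C[\A^n]$ localized appropriately, but in fact $\glob{\widehat Y}$ is already a quotient of $S(V)[W_1,\ldots,W_n]$: the variables $T_k$ become invertible on $\widehat Y$ and by the $\Gamma$-invariance forced on global sections (the grading by $\Cl(X)$ is exactly the $\Gamma$-weight), every $T_k$ must be a unit of weight zero, hence — using Remark~\ref{tear-map} and that $\X(G)^H$ is free — can be normalized away, which is precisely what $\beta$ does. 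So I would argue that $\glob{\widehat Y}$ is generated by the images of the $S_{ij}$ and $W_l$, i.e.~there is a surjection $S(V)[W_1,\ldots,W_n]\twoheadrightarrow\glob{\widehat Y}$, and the $\Cl(Y)$-grading matches the stated one because $\iota^*$ is an isomorphism and the canonical sections of $D_i$ and $Y_l$ go to $S_{i1}$ and $W_l$ respectively (Proposition~\ref{prop-neat}).

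The heart of the matter is then to show that the kernel of this surjection is exactly the ideal $(f^h;\,f\in\p)$. One inclusion: for $f\in\p=\I(G/H)$, the element $\Psi(f)$ vanishes on $p^{-1}(G/H)$ (Remark~\ref{tear-map}), hence $f^\alpha$, which is the ``$\Gamma$-homogenized'' rewriting of $\Psi(f)$ cleared of denominators using the $W_l$, vanishes on $\widehat Y$; applying $\beta$ shows $f^h$ lies in the kernel. The factor $\prod W_l^{\ord_{u_l}(f)}$ in the definition of $f^\alpha$ is exactly what is needed so that no $W_l$ divides $f^\alpha$ — this uses that $W_l$ is prime in $\glob{\widehat Y}$ (Remark~\ref{re-primes}) and that $\ord_{u_l}$ records the order of vanishing along $X_l\cap\widehat Y$. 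For the reverse inclusion: given a homogeneous $F\in S(V)[W_1,\ldots,W_n]$ vanishing on $\widehat Y$, one ``dehomogenizes'' by inverting the $W_l$ and applying $\Psi^{-1}$ to land in $\C[Z]_{(\text{loc})}$; clearing denominators gives an element of $\p$, and one checks its image under $h$ generates the same ideal component as $F$ modulo lower-order $W_l$-terms, then induct on $W_l$-order using primality of the $W_l$. The main obstacle I anticipate is precisely this bookkeeping with the $W_l$-denominators — showing that homogenization $h$ and ``restriction to $\widehat Y$, then dehomogenize'' are mutually inverse up to the ideal, i.e.~that $h$ is well-defined on the level of ideals and surjects onto the relation ideal — since one must track that the $\ord_{u_l}$ normalization is compatible with taking arbitrary $\C$-linear combinations and products, and that no spurious relations appear. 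Everything else is a matter of assembling the toric and spherical dictionaries already developed in the preceding sections.
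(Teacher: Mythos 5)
Your overall strategy—reduce to Hausen's theorem via the neat embedding from Proposition~\ref{prop-neat}, then compute the ideal of the lifted subvariety in the quotient construction—is the same as the paper's. But there is a genuine error at the first step that propagates: you identify the canonical Cox quotient $\pi\colon\widetilde X\to X$ with the map $p\colon\widehat X\to X$ of Remark~\ref{rem-sw}, asserting they are ``good quotients by the same subtorus.'' They are not. The group acting in $p$ is $\Gamma\cong(\C^*)^n$, whereas the group acting in $\pi$ is $\Hom(\Cl(X),\C^*)$, whose rank is $n-m$ (the number of rays of $\Sigma_X$ minus $\dim\T$). The two agree only when $m=0$, i.e.~$\Gamma(G/H,\Om^*_{G/H})=\C^*$; the paper explicitly records this special case in the remark following Theorem~\ref{th-cox}. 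In general there is a proper toric \emph{closed} embedding $\phi\colon\widetilde X\hookrightarrow\widehat X$ with $\pi=p\circ\phi$, and the observation that $\phi^*$ is exactly the map $\beta$ (setting each $T_k\mapsto 1$) is the missing ingredient; $\beta$ is not something you can later ``normalize away'' from $\glob{\widehat Y}$, since $\glob{\widehat Y}$ genuinely contains the extra units $T_k^{\pm1}$. Your justification that ``the grading by $\Cl(X)$ is exactly the $\Gamma$-weight'' is also incorrect for the same rank reason.

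The wrong identification $\Rm(Y)\cong\glob{\widehat Y}$ has a visible false consequence you should have noticed: Proposition~\ref{prop-r} shows $\glob{\widehat Y}$ is \emph{always} factorial, so your claim would make every spherical Cox ring factorial, contradicting the paper's Example~5 (the $\arb{G}=\SL(2)$, $\arb{H}=N_{\arb{G}}(T)$ case, whose Cox ring is $\C[S_{11},S_{12},S_{13}]/(S_{11}S_{12}-S_{13}^2-1)$, a non-UFD). The correct relation is $\widehat Y\cong\widetilde Y\times T$ (and only after choosing the $f_{ij}$ with $C$-weights in a complement of $\X(C)^H$, which requires $H$ connected), with $\Rm(Y)\cong\glob{\widetilde Y}$. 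Once $\phi$ and $\beta$ are put in place correctly, your second step—showing the ideal of $\widehat Y$ in $\widehat X$ is $\p^\alpha=(f^\alpha; f\in\p)$ using $\psi^*(f^\alpha)=f$, $\Gamma$-homogeneity of $\widehat\p$, and primality of the $W_l$—is essentially the paper's argument; where the paper uses the direct identity $g^{(\xi)}=(\psi^*(g^{(\xi)}))^\alpha\prod_l W_l^{d_l}$ you sketch an induction, which is more work but in the same spirit.
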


\begin{proof}
Let $\pi: \widetilde{X} \to X$ be the canonical quotient construction.
We set $\widetilde{Y} \coloneqq \pi^{-1}(Y)$. By \cite[Construction~5.1.4]{coxrings},
the scheme-theoretic fiber $\widetilde{X} \times_X Y$ is reduced. 
There is a canonical toric closed embedding
$\phi: \widetilde{X} \hookrightarrow \widehat{X}$
such that $\pi = p \circ \phi$.
Note that $\phi^*$
sends $f \mapsto f^\beta$. The assertion now follows from Proposition~\ref{prop-neat} and
\cite[Theorem 2.6]{ha} if we show that the ideal $\widehat{\p}$
of $\widehat{Y}$ in $\widehat{X}$ is
$\p^\alpha \coloneqq \rleft(f^\alpha; f \in \p\rright)$.

We use the map $\psi$ from Remark~\ref{rem-psi}. We have $\widehat{\p} = \I(\Gamma \cdot \psi(\V(\p)))$. Every $f^\alpha \in \p^\alpha$ vanishes
on $\psi(\V(\p))$ since $\psi^*(f^\alpha) = f \in \p$, and $f^\alpha$ is a $\Gamma$-eigenvector,
so $f^\alpha \in \widehat{\p}$, and $\p^\alpha \subseteq \widehat{\p}$ follows. Now,
let $g \in \widehat{\p}$. As $\widehat{\p}$ is a homogeneous ideal with
respect to the $\X(\Gamma)$-grading, all
homogeneous components $g^{(\xi)}$ are in $\widehat{\p}$.
It is not difficult to see that
$g^{(\xi)} = \rleft(\psi^*\rleft(g^{(\xi)}\rright)\rright)^\alpha \prod_{l=1}^n W_l^{d_l}$
for some exponents $d_l \in \N_0$. Since $\psi^*\rleft(g^{(\xi)}\rright) \in \p$, the inclusion
$\widehat{\p} \subseteq \p^\alpha$ follows.
\end{proof}

\begin{remark}
If $\Gamma(G/H, \Om^*_{G/H}) = \C^*$, we have $\widetilde{X} = \widehat{X}$, and $\phi$ is the identity.
\end{remark}

Cox rings are always factorially graded (cf.~\cite[Theorem~7.3]{bhhom}),
but not factorial in general (cf.~\cite[Example~4.2]{cfact}).
A sufficient condition for the Cox ring to be factorial
is the divisor class group being free (cf.~\cite[Proposition~8.4]{bhhom}, also \cite[Corollary~1.2]{ekw}).
The last part of the Main Theorem also provides a sufficient condition for the Cox ring
of a spherical variety to be factorial.

\begin{theorem}
\label{thm-fact-fact}
If $H$ is connected, $\Rm(Y)$ is a factorial ring.
\end{theorem}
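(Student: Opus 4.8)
The plan is to reduce the factoriality of $\Rm(Y)$ to the known factoriality of $\glob{\widehat Y}$ from Proposition~\ref{prop-r}, using the geometric quotient $p|_{\widehat Y}\colon\widehat Y\to Y$ together with a divisor-class-group computation. First I would observe that the hypothesis ``$H$ connected'' forces $\Cl(G/H)$ to be trivial (it was standing assumption in this section, but connectedness of $H$ is what makes the homogeneous space quasiaffine with \emph{free} — indeed trivial — class group, hence $\glob{G/H}$ factorial by a result of Popov, and likewise $\glob{\widehat Y}$ factorial as already recorded). The point is then to transfer factoriality along $p$.

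The key steps, in order: (1) Recall from Remark~\ref{rem:norm} that $p|_{\widehat Y}\colon\widehat Y\to Y$ is a good geometric quotient for the torus $\Gamma$, and from Remark~\ref{rem-sw}/Proposition~\ref{prop-r} that $\widehat Y=\Spec\glob{\widehat Y}$ up to removing the closed subset $\widehat S$ of codimension $\ge 2$, so that $\glob{\widehat Y}$ is a factorial domain on which $\Gamma$ acts with $\glob{\widehat Y}^{\Gamma}$ related to functions on $Y$. (2) Identify the $\X(\Gamma)$-grading on $\glob{\widehat Y}$ with the $\Cl(Y)$-grading: since $\iota\colon Y\hookrightarrow X$ is neat (Proposition~\ref{prop-neat}), $\Cl(Y)\cong\Cl(X)\cong\X(\Gamma)$, and the pieces $\glob{\widehat Y}_{[D]}=\Gamma(Y,\Om_Y(D))$ exhibit $\glob{\widehat Y}$ as (a localization of) the Cox ring $\Rm(Y)$. (3) Invoke the criterion that a $\X(\Gamma)$-graded domain which is factorial and whose grading group is the relevant divisor class group yields a factorially graded ring, and then apply \cite[Proposition~8.4]{bhhom} (cited just above the statement): because $H$ is connected, $\Cl(Y)$ is \emph{free} — here I would use the exact sequence $\X(H)\to\Cl(G/H\hookrightarrow Y)\to\Cl(Y)\to 0$ from \cite[Proposition~4.1.1]{brcox} together with the fact that for connected $H$ the relevant class group is freely generated by the $B$-invariant and $G$-invariant prime divisors $[D_i]$, $[Y_l]$ — and a factorially graded ring whose grading group is torsion-free is actually factorial.

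Concretely I would argue: by Theorem~\ref{th-cox}, $\Rm(Y)$ is $\Cl(Y)$-graded, and by \cite[Theorem~7.3]{bhhom} it is factorially graded. When $\Cl(Y)$ is free, \cite[Proposition~8.4]{bhhom} (equivalently \cite[Corollary~1.2]{ekw}) upgrades ``factorially graded'' to ``factorial''. So the entire proof comes down to showing $\Cl(Y)$ is torsion-free when $H$ is connected. For this I would use that $\Cl(Y)$ fits, via the neat embedding, into the same toric picture: $\Cl(X)=\X(\Gamma)=\Z^n\oplus(\text{free part coming from }\Cl(G/H\hookrightarrow Y))$, and the torsion of $\Cl(G/H\hookrightarrow Y)$ is a quotient of $\X(H)$, which vanishes when $H$ is connected only up to the character lattice — more precisely the torsion subgroup of $\Cl$ of a spherical embedding of $G/H$ is $\X(H)_{\mathrm{tors}}$, which is trivial for $H$ connected. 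Hence $\Cl(Y)$ is free and $\Rm(Y)$ is factorial.

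The main obstacle is step (3): pinning down precisely why connectedness of $H$ makes $\Cl(Y)$ \emph{torsion-free} rather than merely finitely generated. The cleanest route is to cite the description of $\Cl$ of a spherical embedding in \cite[Proposition~4.1.1]{brcox}: one has a presentation with generators the $B$-invariant prime divisors and relations coming from $\X(H)$ (equivalently from $B$-semiinvariant rational functions), so $\Cl(Y)\cong\Z^{r+n}/\X(H)$ with $\X(H)$ injecting as a \emph{saturated} sublattice exactly when $H$ is connected; the cokernel is then free. Alternatively, and perhaps more in the spirit of this paper, one can read freeness off Theorem~\ref{th-cox} itself: the grading group is generated by $[D_i]$ and $[Y_l]$ with the only relations being the weights of elements of $\p$, and when $H$ is connected these relations generate a saturated sublattice of $\bigoplus\Z[D_i]\oplus\bigoplus\Z[Y_l]$, forcing $\Cl(Y)$ free. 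Either way, once freeness of $\Cl(Y)$ is in hand, the factoriality of $\Rm(Y)$ is immediate from \cite[Proposition~8.4]{bhhom}.
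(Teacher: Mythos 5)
Your opening sentence states the right plan — descend factoriality from $\glob{\widehat{Y}}$ across the quotient $p$ — but the route you actually follow is different and contains a genuine gap. The crux of your argument is the assertion that $H$ connected forces $\Cl(Y)$ to be free, so that \cite[Proposition~8.4]{bhhom} applies. This assertion is false. Already in the toric case ($G$ a torus, $H=\{1\}$, certainly connected) the class group $\Cl(Y)\cong\Z^n/\Mm$ can have arbitrary torsion: for instance, with $\Nm\cong\Z^2$, $u_1=(1,0)$ and $u_2=(1,2)$ give $\Cl(Y)\cong\Z/2\Z$. The Cox ring is nonetheless factorial there (it is a polynomial ring), which shows your proposed implication, not the theorem, is what fails. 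More generally the cokernel $\Z^{r+n}/\Mm$ depends on the chosen primitive generators $u_l$ of the rays, which are essentially arbitrary points of the valuation cone, and connectedness of $H$ gives no control over the saturation of the image of $\Mm$. (A smaller slip: you also say $H$ connected ``forces $\Cl(G/H)$ to be trivial''; that is also false — $\SL(2)/T$ has class group $\Z$ — but harmless here since triviality of $\Cl(G/H)$ is a standing assumption in Section~2, not a consequence of the connectedness hypothesis.)

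The paper's proof uses the connectedness hypothesis in a different place, and for a different freeness statement. Since $\Gamma(G/H,\Om_{G/H}^*)/\C^* \cong \X(C)^H$, connectedness of $H$ makes $\X(C)/\X(C)^H$ torsion-free (it embeds into $\X(p_2(H))$ with $p_2(H)$ a subtorus of $C$), so the inclusion $\X(C)^H\hookrightarrow\X(C)$ splits: $\X(C)=\X(C)^H\oplus\X(C)^\dagger$. One can therefore renormalize the $f_{ij}$ so that $C$ acts on them through characters in $\X(C)^\dagger$ only. Then for each $k$ there is a one-parameter subgroup of $C$ acting nontrivially only on the coordinate $T_k$; since $\widehat{Y}$ is $C$-invariant this forces $\widehat{Y}\cong\widetilde{Y}\times T$, whence $\glob{\widehat{Y}}\cong\glob{\widetilde{Y}}\otimes_\C\C[T]$ and the factoriality of $\glob{\widehat{Y}}$ (Proposition~\ref{prop-r}) descends to $\Rm(Y)\cong\glob{\widetilde{Y}}$. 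No statement about torsion in $\Cl(Y)$ is made or needed. To repair your argument you would have to abandon the reduction to ``$\Cl(Y)$ free'' entirely and instead find the splitting $\widehat{Y}\cong\widetilde{Y}\times T$, which is where the hypothesis on $H$ actually enters.
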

\begin{proof}
The finitely generated free abelian group $\Gamma(G/H, \Om^*_{G/H})/\C^*$
is naturally isomorphic to the subgroup $\X(C)^H \subseteq \X(C)$
consisting of $H$-invariant characters. The quotient group
$\X(C) / \X(C)^H$ is free 
as $H$ is connected.
Therefore there exists a decomposition $\X(C) = \X(C)^H \oplus \X(C)^\dagger$, and
we can choose the $f_{ij}$ in such a way that $C$ acts on the $f_{ij}$ with characters belonging to $\X(C)^\dagger$.
It follows that for each $1 \le k \le m$ we have a one-parameter subgroup of $C$ acting nontrivially
only on the variable $T_k$. As $\widehat{Y}$ is $C$-invariant, we obtain $\widehat{Y} \cong \widetilde{Y} \times T$.
Therefore the factoriality of $\Rm(Y) \cong \glob{\widetilde{Y}}$ follows
from the factoriality of $\glob{\widehat{Y}}$ (cf.~Proposition~\ref{prop-r}).
\end{proof}

Finally, we provide the proof of Theorem~\ref{prop-trop}.
 
\begin{proof}[Proof of Theorem~\ref{prop-trop}]
We have to show $\Vm = \trop(G/H \cap \T) \cap \Nm_\Q$.
Using \cite[Lemma 2.2]{tev},
we get the inclusion from the left to the right
using Proposition~\ref{prop-main} if
$Y_l$ intersects
the $\T$-orbit which is dense in $X_l$.
This is the case, since otherwise it would
follow from Remark~\ref{re-primes} that
the codimension of $Y_l$ in $Y$ is at least two.
If the inclusion from the right to the left did not hold,
Lemma~\ref{le-val} would yield a
non-existing $G$-invariant valuation.
\end{proof}

\section{Generalization to arbitrary spherical homogeneous spaces}
\label{three}
We now consider the case where the spherical homogeneous space is allowed
to have nontrivial divisor class group.
Let $\arb{G}$ be a connected reductive group and $\arb{H}
\subseteq \arb{G}$ a spherical subgroup. We may again assume
that $\arb{G}$ is of simply connected type,
i.e.~$\arb{G} = \arb{G}^{ss} \times \arb{C}$
where $\arb{G}^{ss}$ is semisimple simply connected and $\arb{C}$ is a torus.

We fix a Borel subgroup $\arb{B} \subseteq \arb{G}$ such that
the base point $1 \in \arb{G}/\arb{H}$ lies in the open $\arb{B}$-orbit and denote
by $\arb{\Dm} \coloneqq \rleft\{\arb{D}_1, \ldots, \arb{D}_r\rright\}$ the set of $\arb{B}$-invariant
prime divisors in $\arb{G}/\arb{H}$.
For each $\arb{D}_i$ the pullback under the quotient map
$\arb{G} \to \arb{G}/\arb{H}$ is a divisor
with equation $\arb{f}_i \in \C\rleft[\arb{G}\rright]$ where $\arb{f}_i$ is uniquely determined
by being $\arb{C}$-invariant with respect to the action from the left
and $\arb{f}_i(1) = 1$ (cf.~\cite[4.1]{brcox}).
The group $\arb{H}$ acts from the right on each $\arb{f}_i$ with a character
$\arb{\chi}_i \in \X\rleft(\arb{H}\rright)$. 

We define
\begin{align*}
G &\coloneqq \arb{G} \times (\C^*)^{\arb{\Dm}} \\
H &\coloneqq \rleft\{\rleft(h, \arb{\chi}_1(h), \ldots, \arb{\chi}_r(h)\rright); h \in \arb{H}\rright\} \subseteq G\text{,}
\end{align*}
and set $B \coloneqq \arb{B} \times (\C^*)^{\arb{\Dm}}$.
We have a quotient map $\arb{\pi}: G/H \to \arb{G}/\arb{H}$, which is
a good geometric quotient by the torus $(\C^*)^{\arb{\Dm}}$.
There is a natural isomorphism $H \cong \arb{H}$, and $G/H$ is a spherical homogeneous space.
The pullbacks of the $\arb{B}$-invariant prime divisors in
$\arb{G}/\arb{H}$ under the quotient map $\arb{\pi}$
are exactly the $B$-invariant prime divisors
$D_1, \ldots, D_r$ in $G/H$.

We denote by $\Pic_{\arb{G}}\rleft(\arb{G}/\arb{H}\rright)$ the group
of isomorphism classes of $\arb{G}$-linearized invertible sheaves
on $\arb{G}/\arb{H}$. The character lattice of $(\C^*)^{\arb{\Dm}}$
is $\Z^{\arb{\Dm}}$ with standard basis $\{\eta_1, \ldots, \eta_r\}$.

\begin{prop}
\label{prop-ex-res}
The following diagram of natural maps is commutative, and the top row is an exact sequence.
\begin{align*}
\xymatrix{
0 \ar[r] & \arb{\Mm} \ar[r] & \X\rleft(\arb{C}\rright) \oplus \Z^{\arb{\Dm}} \ar[r] \ar[d]_{\cong} &
\Pic_{\arb{G}}\rleft(\arb{G}/\arb{H}\rright) \ar[r] & 0 \\
& & \X\rleft(G\rright) \ar[r]_{\operatorname{res}} & \X\rleft(H\rright) \ar[u]_{\cong}
}
\end{align*}
\end{prop}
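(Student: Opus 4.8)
The plan is to pull everything back along the quotient morphism $q\colon \arb{G} \to \arb{G}/\arb{H}$ and to compare left $\arb{B}$-weights with right $\arb{H}$-weights, so that all the bookkeeping reduces to the explicit equations $\arb{f}_i$. First I identify the four maps. The left vertical isomorphism is $\X(\arb{C}) \oplus \Z^{\arb{\Dm}} \cong \X(\arb{G}) \oplus \X\rleft((\C^*)^{\arb{\Dm}}\rright) = \X(G)$, using $\X(\arb{G}^{ss}) = 0$ and identifying $\eta_i$ with the $i$-th coordinate character of $(\C^*)^{\arb{\Dm}}$. The right vertical isomorphism is the composition of the standard identification $\Pic_{\arb{G}}(\arb{G}/\arb{H}) \cong \X(\arb{H})$ (sending a character to the associated $\arb{G}$-linearized line bundle) with the isomorphism $\X(\arb{H}) \cong \X(H)$ coming from $H \cong \arb{H}$; under it one reads off $\operatorname{res}(\eta_i) = \arb{\chi}_i$ directly from $H = \{(h, \arb{\chi}_1(h), \ldots, \arb{\chi}_r(h))\}$, while $\operatorname{res}$ on the summand $\X(\arb{C})$ is ordinary restriction of characters of $\arb{G}$ to $\arb{H}$. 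The first map $\arb{\Mm} \to \X(\arb{C}) \oplus \Z^{\arb{\Dm}}$ sends the weight $\chi$ of a $\arb{B}$-eigenvector $f_\chi \in \C(\arb{G}/\arb{H})^{(\arb{B})}$ to the pair given by the image of $\chi$ under $\X(\arb{B}) \twoheadrightarrow \X(\arb{C})$ together with $(\ord_{\arb{D}_i}(f_\chi))_i$; this is well defined since $f_\chi$ is unique up to a scalar and $\Supp(\operatorname{div}(f_\chi))$ is a union of $\arb{B}$-invariant divisors. With these conventions the middle horizontal map is forced to be the composite of the verticals with $\operatorname{res}$, so commutativity of the square amounts to the two statements about $\operatorname{res}$ just recorded.

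The heart of the matter is the following computation. If $f \in \C(\arb{G}/\arb{H})^{(\arb{B})}$ has $\operatorname{div}(f) = \sum_i a_i \arb{D}_i$, then $\operatorname{div}(q^*f) = \sum_i a_i \operatorname{div}(\arb{f}_i)$, so $q^*f \cdot \prod_i \arb{f}_i^{-a_i}$ has trivial divisor on $\arb{G}$ and hence is a unit; since $\arb{G}^{ss}$ is semisimple simply connected we have $\C[\arb{G}]^*/\C^* \cong \X(\arb{G}) = \X(\arb{C})$, so $q^*f = c\,\psi\,\prod_i \arb{f}_i^{a_i}$ for some $c \in \C^*$ and $\psi \in \X(\arb{C})$. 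Comparing left $\arb{C}$-weights, and using that the $\arb{f}_i$ are left $\arb{C}$-invariant, shows that $\psi$ is (up to the sign convention fixing the first map) the image of $\chi$ in $\X(\arb{C})$; comparing right $\arb{H}$-weights, and using that $q^*f$ is right $\arb{H}$-invariant while $\arb{f}_i$ has right $\arb{H}$-weight $\arb{\chi}_i$, gives $\psi|_{\arb{H}} + \sum_i a_i \arb{\chi}_i = 0$ in $\X(\arb{H})$. Read together, these two identities are exactly the assertion that $\arb{\Mm} \to \X(\arb{C}) \oplus \Z^{\arb{\Dm}} = \X(G) \xrightarrow{\operatorname{res}} \X(H)$ vanishes, so the top row is a complex; they also show that $q^*f$, hence $f$, is determined up to a scalar by $\psi$ and the $a_i$, which gives injectivity of the first map. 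Equivalently, a unit which is a $\arb{B}$-eigenvector with trivial $\arb{C}$-weight is a $\arb{G}$-eigenvector with weight in $\X(\arb{G}) = \X(\arb{C})$, hence constant.

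For exactness in the middle, take conversely $\chi_0 + \sum_i a_i \eta_i \in \X(\arb{C}) \oplus \Z^{\arb{\Dm}}$ with trivial restriction to $H$, i.e.\ $\chi_0|_{\arb{H}} + \sum_i a_i \arb{\chi}_i = 0$ in $\X(\arb{H})$. Then the rational function $\chi_0 \prod_i \arb{f}_i^{a_i}$ on $\arb{G}$ (with $\chi_0$ viewed as a unit of $\C[\arb{G}]$) is right $\arb{H}$-invariant, so it descends to some $f \in \C(\arb{G}/\arb{H})^{(\arb{B})}$ with $\operatorname{div}(f) = \sum_i a_i \arb{D}_i$ and with $\X(\arb{C})$-component $\chi_0$; thus $\chi_0 + \sum_i a_i \eta_i$ lies in the image of $\arb{\Mm}$, which together with the previous paragraph proves exactness in the middle. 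Finally, for surjectivity of the middle map — which, transported along the verticals, is $\operatorname{res}\colon \X(G) \to \X(\arb{H})$ — it suffices that $\X(\arb{H})$ be generated by $\operatorname{res}(\X(\arb{G}))$ together with the $\arb{\chi}_i$; this holds because $\X(\arb{H}) \cong \Pic_{\arb{G}}(\arb{G}/\arb{H})$ surjects onto $\Pic(\arb{G}/\arb{H}) = \Cl(\arb{G}/\arb{H})$ with kernel $\operatorname{res}(\X(\arb{G}))$ (using $\Pic(\arb{G}) = 0$), while $\Cl(\arb{G}/\arb{H})$ is generated by the classes $[\arb{D}_i]$, whose natural $\arb{G}$-linearized lifts are, modulo $\operatorname{res}(\X(\arb{G}))$, exactly the $\arb{\chi}_i$.

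The main obstacle I expect is not the spherical geometry — which is confined to the short computation $q^*f = c\,\psi\,\prod_i \arb{f}_i^{a_i}$ — but keeping the many $\arb{G}$-linearizations and the left/right sign conventions coherent: in particular, matching the standard isomorphism $\Pic_{\arb{G}}(\arb{G}/\arb{H}) \cong \X(\arb{H})$ with the specific linearization on each $\Om_{\arb{G}/\arb{H}}(\arb{D}_i)$ that the normalisation $\arb{f}_i(1)=1$ and the left $\arb{C}$-invariance of $\arb{f}_i$ single out. One should also be careful to cite the two standard inputs used above: that the group of $\arb{G}$-linearized line bundles on a homogeneous space is the character group of the isotropy group, and that the divisor class group of a spherical homogeneous space is generated by the $\arb{B}$-invariant prime divisors, with relations coming from $\arb{B}$-eigenvectors in the function field.
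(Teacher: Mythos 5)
Your proof is correct, and it takes a genuinely more self-contained route than the paper's. Where you differ is in the exactness argument: the paper, after computing the first map explicitly and checking injectivity via the computation $f = c\prod_i \arb{f}_i^{d_i(\mu)}$, simply invokes Brion's Proposition~4.1.1 for the exactness of the remainder of the top row. You instead reprove exactness in the middle by explicitly constructing a preimage $\chi_0\prod_i \arb{f}_i^{a_i}$ descending to $\arb{G}/\arb{H}$, and you prove surjectivity by combining the exact sequence $\X(\arb{G}) \xrightarrow{\operatorname{res}} \X(\arb{H}) \to \Cl(\arb{G}/\arb{H}) \to 0$ (Popov, plus $\Pic(\arb{G})=0$) with the standard fact that $\Cl(\arb{G}/\arb{H})$ is generated by the classes $[\arb{D}_i]$. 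What this buys is transparency about exactly how the characters $\arb{\chi}_i$ enter — namely as the $\arb{G}$-linearized lifts of the $[\arb{D}_i]$, which is precisely the content of the identification $\operatorname{res}(\eta_i) = \arb{\chi}_i$ — and it makes the proposition independent of the particular statement of Brion's Proposition~4.1.1 (whose hypothesis $\glob{\arb{G}/\arb{H}}=\C$ the paper has to explicitly note is unnecessary). The small price is that you must independently cite $\Pic_{\arb{G}}(\arb{G}/\arb{H}) \cong \X(\arb{H})$ and the generation of $\Cl$ by $\arb{B}$-stable divisors, which you do; the leftover sign bookkeeping you flag (the paper's first map carries $-\mu|_{\arb{C}}$, while yours carries $+$, absorbed into the choice $\chi \mapsto \mathscr{L}_{\arb{G}/\arb{H}}(-\chi)$ for the right vertical) is genuinely only a matter of convention and does not affect exactness or commutativity once consistently fixed.
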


\begin{proof}
The map $\X\rleft(\arb{C}\rright) \oplus \Z^{\arb{\Dm}} \to
\Pic_{\arb{G}}\rleft(\arb{G}/\arb{H}\rright)$
sends $\chi \mapsto \Om_{\arb{G}/\arb{H}}(\chi)$
for $\chi \in \X(\arb{C})$
and $\eta_i \mapsto \Om_{\arb{G}/\arb{H}}(\arb{D}_i)$
as in \cite[Proposition~4.1.1]{brcox}
where $\Om_{\arb{G}/\arb{H}}(\arb{D}_i)$ is canonically
$\arb{G}$-linearized as in \cite[4.1]{brcox}.
The left-hand isomorphism is obvious,
$\operatorname{res}$ is the restriction map,
and the 
right-hand isomorphism sends $\chi \mapsto \mathscr{L}_{\arb{G}/\arb{H}}(-\chi)$ for
$\chi \in \X(H)$ where
$\mathscr{L}_{\arb{G}/\arb{H}}(-\chi)$
is the standard construction (cf.~\cite[2.1]{local} or \cite[after Proposition~2.4]{ti}).
It is not difficult to see that the diagram commutes.
We denote by $\orho\rleft(\arb{D}_i\rright) \in \arb{\Nm}_\Q$ the vector corresponding to 
the restriction of the discrete valuation induced by $\arb{D}_i \in \arb{\Dm}$
and set $d_i(\mu) \coloneqq \rleft(\orho\rleft(\arb{D_i}\rright)\rright)(\mu) \in \Z$.
Then the map $\arb{\Mm} \to \X\rleft(\arb{C}\rright) \oplus \Z^{\arb{\Dm}}$ sends
\begin{align*}
\mu \mapsto \sum_{i=1}^r d_i(\mu) \eta_i -
\mu|_{\arb{C}}\text{.}
\end{align*}
If $d_i(\mu) = 0$ for all $1 \le i \le r$ and $\mu$ is the $\arb{B}$-weight
of $f \in \C\rleft(\arb{G}/\arb{H}\rright)^{\rleft(\arb{B}\rright)}$, it follows that $\operatorname{div}(f) = 0$, so $f$ is
a unit in $\glob{\arb{G}/\arb{H}}$. This means we have $f \in 
\X\rleft(\arb{C}\rright)$, so $\mu|_{\arb{C}} = 0$ if and only if $\mu = 0$.
Therefore the map is injective. Using the first statement of \cite[Proposition~4.1.1]{brcox},
which in fact does not require the assumption $\glob{\arb{G}/\arb{H}} = \C$,
we obtain the exactness of the top row.
\end{proof}

\begin{cor}
The spherical homogeneous space $G/H$ has trivial divisor class group.
\end{cor}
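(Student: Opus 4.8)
The plan is to read off triviality of $\Cl(G/H)$ directly from the exact sequence in Proposition~\ref{prop-ex-res}. Recall that for a spherical homogeneous space, the divisor class group is the cokernel of the map $\Mm \to \Z^{\Dm}$ sending a $B$-weight $\mu$ to the tuple of coefficients $(d_i(\mu))_i$ of its divisor $\operatorname{div}(f_\mu) = \sum_i d_i(\mu) D_i$ (every $B$-invariant Weil divisor class has a representative supported on $\{D_1,\ldots,D_r\}$, since $G/H$ is homogeneous so every class is $G$-stable hence $B$-stable, and $\C(G/H)^*/\C^*$ surjects onto the torsion-free part). So $\Cl(G/H) \cong \Z^{\Dm} / \langle (d_i(\mu))_i : \mu \in \Mm\rangle$.

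Now apply the description of the map $\arb{\Mm} \to \X(\arb{C}) \oplus \Z^{\arb{\Dm}}$ from the proof of Proposition~\ref{prop-ex-res}: it sends $\mu \mapsto \sum_i d_i(\mu)\eta_i - \mu|_{\arb{C}}$, and it is injective with cokernel $\Pic_{\arb{G}}(\arb{G}/\arb{H}) \cong \X(H)$. Under the isomorphism $H \cong \arb{H}$ we have $\arb{\Mm} = \Mm$, $\arb{\Dm}$ in bijection with $\Dm$, and the $\arb{B}$-invariant divisors $\arb{D}_i$ pull back to the $D_i$; moreover by construction $d_i(\mu)$ for $G/H$ agrees with $d_i(\mu)$ for $\arb{G}/\arb{H}$, since $\arb{\pi}: G/H \to \arb{G}/\arb{H}$ is a geometric quotient by a torus and hence induces an isomorphism on $B$-eigenfunctions and on the relevant valuations. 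Thus the weight lattice $\Mm$ of $G/H$ maps to $\Z^{\Dm}$ by $\mu \mapsto (d_i(\mu))_i$, and this is exactly the composition of the injection $\Mm \hookrightarrow \X(\arb{C}) \oplus \Z^{\arb{\Dm}}$ with the projection onto $\Z^{\arb{\Dm}}$.

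The point is that for the modified group $G = \arb{G} \times (\C^*)^{\arb{\Dm}}$, the extra torus factor forces the $\arb{C}$-component of the map to be absorbed: concretely, $\X(G) = \X(\arb{C}) \oplus \Z^{\arb{\Dm}}$, and the construction of $H$ makes $\operatorname{res}: \X(G) \to \X(H)$ surjective onto all of $\X(H) \cong \Pic_{\arb{G}}(\arb{G}/\arb{H})$, which is the cokernel of $\Mm \hookrightarrow \X(\arb{C})\oplus\Z^{\arb{\Dm}}$. Hence the composite $\Mm \to \X(\arb{C})\oplus\Z^{\arb{\Dm}} \to \Z^{\arb{\Dm}}$ is surjective: given any $(a_i)_i \in \Z^{\arb{\Dm}}$, lift $(0,(a_i)_i) \in \X(G)$ along $\operatorname{res}$ and subtract a suitable element of the image of $\Mm$ — more precisely, since the sequence $0 \to \Mm \to \X(\arb{C})\oplus\Z^{\arb{\Dm}} \to \X(H) \to 0$ is exact, the image of $\Mm$ together with the kernel of the projection $\X(\arb{C})\oplus\Z^{\arb{\Dm}} \to \Z^{\arb{\Dm}}$, namely $\X(\arb{C})$, generate the whole group iff $\X(\arb{C})$ surjects onto $\X(H)$, which it need not; instead one argues directly that $\Z^{\Dm} = \langle (d_i(\mu))_i : \mu\in\Mm\rangle$ because the quotient is $\Pic_G(G/H)/(\text{image of }\X(G))$ and for the product group $G$ one has $\Pic_G(G/H) = \X(H)$ with $\operatorname{res}: \X(G) \twoheadrightarrow \X(H)$, so the quotient vanishes. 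Therefore $\Cl(G/H) = 0$. The only genuinely delicate point is checking carefully that the surjectivity of $\operatorname{res}: \X(G) \to \X(H)$ — which holds because every character $\arb{\chi}_i$ of $\arb{H}$ was built into the $i$-th coordinate of the $(\C^*)^{\arb{\Dm}}$ factor and the characters $\arb{\chi}_i$ together with $\X(\arb{C})|_{\arb{H}}$ generate $\X(\arb{H})$ (this last uses that $\Pic_{\arb{G}}(\arb{G}/\arb{H})$ is generated by the $\Om(\arb{D}_i)$ and the $\Om(\chi)$) — translates into the desired statement about divisor classes; but this is immediate from the commutativity of the diagram in Proposition~\ref{prop-ex-res} and the identification of $\Cl(G/H)$ with $\operatorname{coker}(\Mm \to \Z^{\Dm})$.
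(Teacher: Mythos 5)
Your proposal ultimately lands on the same two ingredients as the paper's proof: Popov's theorem, which identifies $\Cl(G/H) = \Pic(G/H)$ with the cokernel of $\operatorname{res}\colon \X(G) \to \X(H)$, and the surjectivity of $\operatorname{res}$, which is read off from the exactness of the top row in Proposition~\ref{prop-ex-res} together with the commutativity of the diagram. So the kernel of the argument is correct and coincides with the paper's. However, the path you take to get there contains one outright error and one unjustified detour.

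The error: you assert that under the isomorphism $H \cong \arb{H}$ one has $\arb{\Mm} = \Mm$. This is false. The lattice $\Mm$ for $G/H$ decomposes as $\Mm = \Mm_\hi \oplus \Mm_\un$ with $\Mm_\hi$ of rank $r$ and $\Mm_\un$ of rank $m$, whereas the paper shows (via the map $\gamma$ defined after Proposition~\ref{prop-valcone}) that $\arb{\Mm}$ is naturally isomorphic only to the sublattice $\Mm_\un$. The map $\arb{\pi}^*\colon \arb{\Mm} \hookrightarrow \Mm$ induced by the quotient $\arb{\pi}\colon G/H \to \arb{G}/\arb{H}$ is injective but far from surjective; the extra $r$ generators of $\Mm$ come from the $B$-weights $v_i^*$ of the $f_i$. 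Because of this conflation, the sentences in which you assert that the composition $\Mm \to \X(\arb{C}) \oplus \Z^{\arb{\Dm}} \to \Z^{\arb{\Dm}}$ is surjective do not actually follow from the exact sequence of Proposition~\ref{prop-ex-res} (which concerns $\arb{\Mm}$, not $\Mm$), and indeed you yourself notice this and abandon the line with the parenthetical ``which it need not.''

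The detour: identifying $\Cl(G/H)$ with $\operatorname{coker}(\Mm \to \Z^{\Dm})$ and then matching this with $\Pic_G(G/H)/\operatorname{im}\X(G)$ requires the analogue of Proposition~\ref{prop-ex-res} stated for $G/H$ rather than for $\arb{G}/\arb{H}$, together with the exact sequence $\X(G) \to \Pic_G(G/H) \to \Pic(G/H) \to 0$; you do not spell this out. But once Popov's theorem is invoked anyway, the whole cokernel computation is superfluous: Popov says $\Cl(G/H)$ is trivial if and only if every character of $H$ extends to $G$, and the diagram in Proposition~\ref{prop-ex-res} exhibits $\operatorname{res}\colon\X(G)\to\X(H)$ as surjective because it is identified with the surjection $\X(\arb{C})\oplus\Z^{\arb{\Dm}} \twoheadrightarrow \Pic_{\arb{G}}(\arb{G}/\arb{H})$ in the exact top row. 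That is exactly the paper's two-line argument, and it is cleaner to state it directly.
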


\begin{proof}
By a theorem of Popov, $\Cl(G/H) = \Pic(G/H)$ is trivial if and only if every character of $H$
is the restriction of a character of $G$ (cf.~\cite{Popov:pic}, \cite[Theorem~2.5]{ti}).
By Proposition~\ref{prop-ex-res}, $\operatorname{res}: \X(G) \to \X(H)$ is surjective.
\end{proof}

We will continue to use the notation from the previous sections
for the spherical homogeneous space $G/H$.
Where applicable, we use the same notation
for the spherical homogeneous space $\arb{G}/\arb{H}$,
but in boldface symbols. In particular,
$\arb{\Mm}$ is the weight lattice of $\arb{B}$-eigenvectors
in the function field $\C\rleft(\arb{G}/\arb{H}\rright)$, $\arb{\Nm} = \Hom\rleft(\arb{\Mm}, \Z\rright)$
the dual lattice, $\arb{\Nm}_\Q = \arb{\Nm} \otimes_\Z \Q$,
and $\arb{\Vm}$ is the valuation cone of $\arb{G}/\arb{H}$.

We fix a convenient choice for
the prime elements $f_1, \ldots, f_r \in \glob{G/H}$.
We have a natural inclusion $\epsilon: \Z^{\arb{\Dm}} \hookrightarrow \C[G]^*$, $\chi \mapsto \epsilon^\chi$. Then $f_i \coloneqq
\arb{f}_i\epsilon^{-\eta_i} \in \C[G]$ is $H$-invariant under the action from the right, and
$f_i$ is a prime element in $\glob{G/H}$ with $\V(f_i) = D_i$.
The torus $(\C^*)^{\arb{\Dm}}$ acts on $f_i \in \glob{G/H}$ with weight $\eta_i$.

Recall the decomposition $\Mm = \Mm_\hi \oplus \Mm_\un$. We have a corresponding
decomposition of the dual lattice $\Nm = \Nm_\hi \oplus \Nm_\un$.

We consider a spherical embedding $\arb{G}/\arb{H} \hookrightarrow \arb{Y}$
which contains only non-open $\arb{G}$-orbits of codimension one,
given by a fan $\arb{\Sigma}$ in $\arb{\Nm}_\Q$,
and assume $\Gamma\rleft(\arb{Y}, \Om_{\arb{Y}}^*\rright) = \C^*$. We denote
by $\arb{Y}_1, \ldots, \arb{Y}_n$ the $\arb{G}$-invariant prime
divisors in $\arb{Y}$.

The next two results will allow us to obtain
a fan $\Sigma$ in $\Nm_\Q$ with associated spherical
embedding $G/H \hookrightarrow Y$ from the fan $\arb{\Sigma}$ in $\arb{\Nm}_\Q$.
We will then construct the ring $\Rm(Y)$ exactly as in Section~\ref{two}. As
we may have $\Gamma(Y, \Om_Y^*) \ne \C^*$, the ring $\Rm(Y)$ may not be the Cox ring of $Y$.
This ring will, however, be the Cox ring $\Rm\rleft(\arb{Y}\rright)$ of $\arb{Y}$.

The quotient map $\arb{\pi}: G/H \to \arb{G}/\arb{H}$ induces an inclusion
$\arb{\pi}^*: \arb{\Mm} \hookrightarrow \Mm$,
which is the restriction of the natural inclusion
$\arb{\pi}^+: \X\rleft(\arb{B}\rright) \hookrightarrow \X(B)$.
We also obtain the surjective dual map
$\arb{\pi}_*: \Nm_\Q \to \arb{\Nm}_\Q$.

\begin{prop}
\label{prop-valcone}
We have
\begin{align*}
\Vm = \arb{\pi}_*^{-1}\rleft(\arb{\Vm}\rright)\text{.}
\end{align*}
\end{prop}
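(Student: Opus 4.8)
The plan is to identify both cones as subsets of $\Nm_\Q$ via pullback along the valuations and then use the characterization of the valuation cone as the dual of the cone $\Tm$ generated by the spherical roots. Concretely, the quotient map $\arb{\pi}\colon G/H \to \arb{G}/\arb{H}$ is a torus bundle (a geometric quotient by $(\C^*)^{\arb{\Dm}}$), so it induces on function fields the inclusion $\arb{\pi}^*\colon \C(\arb{G}/\arb{H})^{(\arb{B})} \hookrightarrow \C(G/H)^{(B)}$, and on weight lattices the inclusion $\arb{\pi}^*\colon \arb{\Mm}\hookrightarrow \Mm$ whose dual is the surjection $\arb{\pi}_*\colon\Nm_\Q\twoheadrightarrow\arb{\Nm}_\Q$. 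The first step would be to relate the isotypic decompositions: since $(\C^*)^{\arb{\Dm}}$ acts trivially on $\arb{G}/\arb{H}$-pulled-back functions and freely shifts the $\eta_i$-weights, one sees that $V_{\mu_3}\subseteq V_{\mu_1}\cdot V_{\mu_2}$ in $\glob{G/H}$ forces (after projecting to the $\arb{\Mm}$-component under $\Mm=\arb{\pi}^*\arb{\Mm}\oplus(\text{complement})$) the corresponding containment in $\glob{\arb{G}/\arb{H}}$, and conversely any product relation upstairs lifts. This should give $\Tm = \arb{\pi}^*(\arb{\Tm})$ as cones in $\Mm_\Q$, i.e.\ the spherical roots of $G/H$ are exactly the images under $\arb{\pi}^*$ of the spherical roots of $\arb{G}/\arb{H}$.

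The second step is then purely formal: by the discussion in Section~\ref{one}, $-\Vm$ is the dual cone of $\Tm$ inside $\Nm_\Q$ and $-\arb{\Vm}$ is the dual cone of $\arb{\Tm}$ inside $\arb{\Nm}_\Q$. Given $\Tm=\arb{\pi}^*(\arb{\Tm})$, for $u\in\Nm_\Q$ we have $\langle u,\arb{\pi}^*\gamma\rangle = \langle \arb{\pi}_* u,\gamma\rangle$ for every spherical root $\gamma$ of $\arb{G}/\arb{H}$, so $u\in\Vm$ if and only if $\arb{\pi}_* u\in\arb{\Vm}$, which is precisely the claim $\Vm = \arb{\pi}_*^{-1}(\arb{\Vm})$. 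Alternatively, and perhaps more in the spirit of this paper, one could deduce the equality from Theorem~\ref{prop-trop}: the embedding $G/H\hookrightarrow Z$ is built from the $f_{ij}$ and $g_k$, and since $f_i=\arb{f}_i\epsilon^{-\eta_i}$ differs from the pulled-back $\arb{f}_i$ only by a unit in the extra torus directions, the tropicalization $\trop(G/H\cap\T)$ maps onto $\trop(\arb{G}/\arb{H}\cap\arb{\T})$ under the projection $\Nm_\Q\to\arb{\Nm}_\Q$, with the fibers over the valuation cone being full linear spaces because the $(\C^*)^{\arb{\Dm}}$-action is free; intersecting with $\Nm_\Q$ and $\arb{\Nm}_\Q$ then yields the stated relation.

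The third ingredient, needed to make either argument rigorous, is that $G/H$ is genuinely \emph{spherical} (so that $\Vm$ is defined and cosimplicial) and that a Borel of $G$ has the expected open orbit — this was already noted when $G/H$ was introduced, so it can be used freely. One should also check that the inclusion $\arb{\pi}^*\colon\arb{\Mm}\hookrightarrow\Mm$ is saturated, i.e.\ that $\Mm/\arb{\pi}^*\arb{\Mm}$ is torsion-free; this is immediate from $\Mm=\Mm_\hi\oplus\Mm_\un$ together with the fact that the $w_k^*$ come from $\arb{C}$-characters pulled back to $G$ and the extra weights $\eta_1,\dots,\eta_r$ span a complement, so $\arb{\pi}_*$ is a genuine surjection of lattices and $\arb{\pi}_*^{-1}(\arb{\Vm})$ is the correct rational preimage.

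I expect the main obstacle to be the comparison of the two cones $\Tm$ and $\arb{\Tm}$, i.e.\ carefully tracking how the $(\C^*)^{\arb{\Dm}}$-factor acts on the isotypic pieces of $\glob{G/H}$ and confirming that the product structure is exactly the product structure of $\glob{\arb{G}/\arb{H}}$ tensored with the (trivially graded, in the relevant sense) coordinate ring of the torus — so that no new spherical roots appear and none disappear. Once that is pinned down, the passage to dual cones is routine. If instead one routes the proof through Theorem~\ref{prop-trop}, the analogous difficulty reappears as the claim that $\trop$ commutes with the projection $\arb{\pi}_*$ up to the linear directions contributed by the units $\epsilon^{-\eta_i}$, which is again a bookkeeping statement about how the defining equations of $G/H$ relate to those of $\arb{G}/\arb{H}$ inside the respective toric ambient spaces.
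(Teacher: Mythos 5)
Your proposal does not give a proof; it gives a dual reformulation of the statement and then flags the substantive content as an unresolved ``obstacle.'' Your first route is to show $\Tm = \arb{\pi}^*(\arb{\Tm})$ and then dualize. But because $\Vm = -\Tm^\vee$, $\arb{\Vm} = -\arb{\Tm}^\vee$, and dualization intertwines $\arb{\pi}^*$ and $\arb{\pi}_*$ in the usual way (the dual of the image of a closed cone under $\arb{\pi}^*$ is the preimage under $\arb{\pi}_*$ of its dual), the identity $\Tm = \arb{\pi}^*(\arb{\Tm})$ is \emph{equivalent} to the proposition. So you have not reduced the problem to anything easier; you have restated it in the dual lattice. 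You then admit that the key step --- tracking the $(\C^*)^{\arb{\Dm}}$-action on the isotypic pieces of $\glob{G/H}$ and matching product structures with $\glob{\arb{G}/\arb{H}}$ --- is precisely what remains to be done. Note also that $\glob{G/H}$ is not a tensor product $\glob{\arb{G}/\arb{H}} \otimes_\C \C\rleft[(\C^*)^{\arb{\Dm}}\rright]$; it is a sum of sections of the $\arb{G}$-linearized line bundles $\Om_{\arb{G}/\arb{H}}\rleft(\sum d_i\arb{D}_i\rright)$ over $\arb{G}/\arb{H}$, so comparing ``$V_{\mu_3}\subseteq V_{\mu_1}\cdot V_{\mu_2}$'' on the two sides is not merely bookkeeping, and there is no canonical splitting $\Mm = \arb{\pi}^*\arb{\Mm}\oplus(\text{complement})$ to project along (by Proposition~\ref{prop-composedmap}, $\arb{\pi}^*(\mu)$ has a nontrivial $\Mm_\hi$-component in general). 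Your second route, through Theorem~\ref{prop-trop}, has a scope problem: that theorem is established under the standing assumption that the homogeneous space has trivial divisor class group, which holds for $G/H$ but not in general for $\arb{G}/\arb{H}$, so one would first have to set up and justify an analogous tropical description downstairs.

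The paper's argument avoids all of this by appealing to two structural results of Knop. First, any $v$ with $\arb{\pi}_*(v)=0$ extends to a linear functional on $\X(B)$ vanishing on $\arb{\pi}^+\rleft(\X\rleft(\arb{B}\rright)\rright)$, and \cite[Corollary~5.3]{knopsph} then forces $v \in \Vm$, i.e.\ $\ker(\arb{\pi}_*)\subseteq\Vm$. Second, \cite[Corollary~1.5]{knopsph} (functoriality of valuation cones under dominant equivariant morphisms) gives $\arb{\pi}_*(\Vm)=\arb{\Vm}$. Since $\Vm$ is a convex cone and $\ker(\arb{\pi}_*)$ is a linear subspace contained in it, these two facts combine at once to $\Vm = \arb{\pi}_*^{-1}(\arb{\Vm})$. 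This is genuinely shorter and does not require re-deriving how the isotypic structure of $\glob{G/H}$ sits over $\glob{\arb{G}/\arb{H}}$. If you want to complete your route, you would essentially need to re-prove the invariance of spherical roots under this torus quotient construction, which amounts to reproving Knop's functoriality in this special case; at that point it is cleaner to cite it.
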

\begin{proof}
Let $v \in \Nm_\Q$ with $\arb{\pi}_*(v) = 0$.
Interpreting $v$ as map $v: \Mm \to \Q$, this means $v \circ \arb{\pi}^* = 0$.
Therefore there exists an extension $v^+: \X(B) \to \Q$ of $v$ with $v^+ \circ \arb{\pi}^+ = 0$.
By \cite[Corollary~5.3]{knopsph}, we obtain $v \in \Vm$, therefore $\arb{\pi}_*^{-1}(0) \subseteq
\Vm$. Finally, we use \cite[Corollary~1.5]{knopsph}.
\end{proof}

As the $B$-weights of the kernel of the restriction map $\operatorname{res}: \X(G) \to \X(H)$ are exactly
the lattice $\Mm_\un$, Proposition~\ref{prop-ex-res} yields an isomorphism
$\gamma: \arb{\Mm} \to \Mm_\un$.

\begin{prop}
\label{prop-composedmap}
The restricted map
\begin{align*}
\arb{\pi}_*|_{\rleft(\Nm_\un\rright)_\Q}: \rleft(\Nm_\un\rright)_\Q \to \arb{\Nm}_\Q
\end{align*}
is dual to the isomorphism $\gamma: \arb{\Mm} \to \Mm_\un$. In particular,
it is itself an isomorphism.
\end{prop}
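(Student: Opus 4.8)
The plan is to unwind the two isomorphisms in play and check that the relevant pairings agree. Recall from Proposition~\ref{prop-ex-res} that the inclusion $\arb{\Mm} \hookrightarrow \X(\arb{C}) \oplus \Z^{\arb{\Dm}} \cong \X(G)$ sends $\mu \mapsto \sum_{i=1}^r d_i(\mu)\eta_i - \mu|_{\arb{C}}$, where $d_i(\mu) = \big(\orho(\arb{D}_i)\big)(\mu)$; since the $\eta_i$ span exactly the part of $\X(G)$ whose $B$-weights generate $\Mm_\un$, the composite $\arb{\Mm} \xrightarrow{\gamma} \Mm_\un$ is $\mu \mapsto \sum_{i=1}^r d_i(\mu) v_i^*$ under the identification $\Mm_\un = \langle w_1^*, \ldots, w_m^*\rangle$ — more precisely, using that $v_i^* = \chi_{f_i}$ and $f_i = \arb{f}_i \epsilon^{-\eta_i}$ carries $(\C^*)^{\arb{\Dm}}$-weight $\eta_i$, the component of $\gamma(\mu)$ along $v_i^*$ is $d_i(\mu)$. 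The upshot is a clean formula for $\gamma$ in terms of the numbers $d_i(\mu)$.

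Next I would make the dual map explicit. The lattice $\Nm_\un$ has dual basis $\{v_1, \ldots, v_r\}$ to $\Mm_\un = \langle v_1^*, \ldots, v_r^* \rangle$ (here I am using that $r = m$ in this situation, which follows because $\gamma$ is an isomorphism onto $\Mm_\un$ and the $v_i^*$ form a basis of $\Mm_\un$; this should be remarked). For a basis element $v_j \in \Nm_\un$ and $\mu \in \arb{\Mm}$ we have $\big(\arb{\pi}_*(v_j)\big)(\mu) = v_j\big(\arb{\pi}^*(\mu)\big) = v_j(\mu)$, where on the right $\mu$ is viewed in $\Mm$ via the inclusion $\arb{\pi}^* : \arb{\Mm} \hookrightarrow \Mm$. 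Writing $\mu = \sum_i d_i(\mu) v_i^* + (\text{component in }\Mm_\hi)$ — this decomposition is precisely the content of the formula above combined with the fact that $\arb{\pi}^*\mu$ and $\gamma(\mu)$ have the same $\Mm_\un$-component — we get $v_j\big(\arb{\pi}^*(\mu)\big) = d_j(\mu)$. Hence $\big(\arb{\pi}_*(v_j)\big)(\mu) = d_j(\mu) = \langle \gamma(\mu), v_j \rangle$ pairs correctly, i.e.\ $\arb{\pi}_*|_{(\Nm_\un)_\Q}$ and $\gamma$ are mutually dual. Being dual to an isomorphism, $\arb{\pi}_*|_{(\Nm_\un)_\Q}$ is an isomorphism.

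The main obstacle I anticipate is bookkeeping rather than conceptual: one must be careful that the $\Mm_\un$-component of the inclusion $\arb{\pi}^*(\mu) \in \Mm = \Mm_\hi \oplus \Mm_\un$ is genuinely $\sum_i d_i(\mu) v_i^*$ and contributes nothing extra, and that $\arb{\pi}^*$ really is the restriction of $\arb{\pi}^+ : \X(\arb{B}) \hookrightarrow \X(B)$ as asserted just before Proposition~\ref{prop-valcone}. The cleanest way to nail this down is to observe that for $f \in \C(\arb{G}/\arb{H})^{(\arb{B})}$ with $\arb{B}$-weight $\mu$, its pullback $\arb{\pi}^*f$ differs from a product $\prod_i f_i^{d_i(\mu)}$ by a unit in $\glob{G/H}$ (because they have the same divisor on $G/H$, namely $\sum_i d_i(\mu) D_i$), and units contribute only to $\Mm_\un$ via their $\X(C)$-weight, which is killed when we restrict to the $\arb{C}$-invariant part — this is exactly the mechanism already used in the proof of Proposition~\ref{prop-ex-res}. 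Once this identification is in hand, the duality computation is a one-line pairing check and the conclusion follows immediately.
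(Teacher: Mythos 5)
There is a genuine error in your second paragraph that propagates through the rest of the argument. You assert that $\{v_1^*,\ldots,v_r^*\}$ is a basis of $\Mm_\un$ and conclude $r=m$; but by the conventions fixed in the introduction and in Proposition~\ref{prop:m-basis}, the $v_i^*$ (the $B$-weights of the prime elements $f_i$) span $\Mm_\hi$, while $\Mm_\un$ is spanned by the $w_k^*$ (the $B$-weights of the units $g_k$). Correspondingly, the dual basis of $\Nm_\un$ is $\{w_1,\ldots,w_m\}$, not $\{v_1,\ldots,v_r\}$. The equality $r=m$ is false in general: in the $\SL(2)/T$ example of Section~\ref{four} one has $r=2$ but $m=1$, since the rank of $\arb{\Mm}$ (which $\gamma$ transports to $\Mm_\un$) is $1$ while there are two $\arb{B}$-invariant prime divisors. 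As a result, your final pairing check --- $\langle\gamma(\mu),v_j\rangle=d_j(\mu)$ --- cannot be correct: $\gamma(\mu)\in\Mm_\un$ and $v_j\in\Nm_\hi$ pair to zero.

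The factorization $\arb{\pi}^*f=c\prod_i f_i^{d_i(\mu)}$ you invoke in your last paragraph is indeed the key computational input, but you apply it to compute the $\Mm_\hi$-component of $\arb{\pi}^*(\mu)$ (namely $\sum_i d_i(\mu)v_i^*$), whereas the claim requires controlling the $\Mm_\un$-component. For $v\in(\Nm_\un)_\Q$, we have $v|_{\Mm_\hi}=0$, so $\langle\arb{\pi}_*(v),\mu\rangle=\langle v,\arb{\pi}^*(\mu)\rangle=\langle v,\gamma(\mu)\rangle$ precisely when $\gamma(\mu)-\arb{\pi}^*(\mu)\in\Mm_\hi$. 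This is the reduction the paper makes, and it is then verified by writing out $\gamma(\mu)$ and $\arb{\pi}^*(\mu)$ in the decomposition $\X(B)\cong\X(\arb{B})\oplus\Z^{\arb{\Dm}}$ using the formula for the map in Proposition~\ref{prop-ex-res} and the fact that $c$ has left $\arb{B}$-weight $\mu|_{\arb{C}}$: both expressions share the term $\mu|_{\arb{C}}$, and the remaining terms $-\sum_i d_i(\mu)\eta_i$ and $\sum_i d_i(\mu)\omega_i$ combine to an element of $\Mm_\hi$ (being a $\Z$-linear combination of the $v_i^*=\omega_i+\eta_i$). Your outline would become correct if you swapped the roles of $\Mm_\hi$ and $\Mm_\un$, worked with the $w_k$ rather than the $v_j$, and observed that the unit $c$ (not the product $\prod f_i^{d_i(\mu)}$) is what contributes the $\Mm_\un$-component of $\arb{\pi}^*(\mu)$, which must then be matched against $\gamma(\mu)$.
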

\begin{proof}
We have to show that for each $v \in \Nm_\Q$ with $v|_{\Mm_\hi} = 0$
and each $\mu \in \arb{\Mm}$
we have $v(\gamma(\mu)) = v(\arb{\pi}^*(\mu))$. It therefore
suffices to show that $\gamma(\mu) - \arb{\pi}^*(\mu) \in \Mm_\hi$ for
each  $\mu \in \arb{\Mm}$.

Let $\mu \in \arb{\Mm}$ be the $\arb{B}$-weight of the
$\arb{B}$-eigenvector
$f \in \C\rleft(\arb{G}/\arb{H}\rright) \subseteq \C(\arb{G})$.
Using the notation from the proof of Proposition~\ref{prop-ex-res}, we
necessarily have
\begin{align*}
f = c\prod_{i=1}^r \arb{f}_i^{d_i(\mu)}\text{,}
\end{align*}
where $c \in \C\rleft[\arb{G}\rright]^*$ has left $\arb{B}$-weight $\mu|_{\arb{C}}$. Therefore we obtain
\begin{align*}
\gamma(\mu) = -\sum_{i=1}^r d_i(\mu)\eta_i + \mu|_{\arb{C}} && \text{ and } &&
\arb{\pi}^*(\mu) = \mu|_{\arb{C}} + \sum_{i=1}^r d_i(\mu)\omega_i
\end{align*}
where $\omega_i$ is the left
$\arb{B}$-weight of $\arb{f}_i$, hence $\gamma(\mu) - \arb{\pi}^*(\mu) \in \Mm_\hi$.
\end{proof}

We obtain the fan $\Sigma$ in $\Nm_\Q$
(with associated spherical embedding $G/H \hookrightarrow Y$)
as preimage under $\arb{\pi}_*|_{\rleft(\Nm_\un\rright)_\Q}$ of the fan $\arb{\Sigma}$ in $\arb{\Nm}_\Q$.

\begin{remark}
There is a good geometric quotient of the whole toric variety $X$
by the action of $(\C^*)^{\arb{\Dm}}$. This means that $\arb{\pi}$ can be
extended to a quotient $\arb{\pi}: X \to \arb{X}$.
We obtain the following natural commutative diagram.
\begin{align*}
\xymatrix{
\rleft(\Nm_\un\rright)_\Q \ar@{^{(}->}[r] & \Nm_\Q \ar[r]^{\arb{\pi}_*} \ar@{^{(}->}[d] & \arb{\Nm}_\Q \ar@{^{(}->}[d] \\
& N_\Q \ar[r]^{\arb{\pi}_*} & \arb{N}_\Q
}
\end{align*}
Similarly to Section~\ref{two}, we obtain $\arb{Y}$ as
closure of $\arb{G}/\arb{H}$ inside $\arb{X}$, and the embedding $\arb{Y} \hookrightarrow \arb{X}$ is neat.
\end{remark}

\begin{theorem}
\label{th-cox-gen}
We have
\begin{align*}
\Rm\rleft(\arb{Y}\rright) \cong S(V)[W_1, \ldots, W_n]\rleft/\middle(f^h; f \in \p\rright)\text{,}
\end{align*}
with $\Cl\rleft(\arb{Y}\rright)$-grading given by $\deg(S_{ij}) = \rleft[\arb{D}_i\rright]$ and $\deg(W_l) = \rleft[\arb{Y}_l\rright]$.
\end{theorem}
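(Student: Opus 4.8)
The plan is to reduce Theorem~\ref{th-cox-gen} to the already-proven Theorem~\ref{th-cox}, exploiting the fact that the spherical homogeneous space $G/H$ (constructed from $\arb{G}/\arb{H}$ by adjoining the torus $(\C^*)^{\arb{\Dm}}$) has trivial divisor class group and satisfies $\Gamma(Y, \Om_Y^*) = \C^*$ under an appropriate additional hypothesis, or more precisely that the entire machinery of Section~\ref{two} applies to it verbatim. First I would observe that by Theorem~\ref{th-cox}, applied to the spherical embedding $G/H \hookrightarrow Y$ associated to the fan $\Sigma$ constructed above as the $\arb{\pi}_*|_{(\Nm_\un)_\Q}$-preimage of $\arb{\Sigma}$, we have
\begin{align*}
\Rm(Y) \cong S(V)[W_1, \ldots, W_n]\big/\big(f^h; f \in \p\big)\text{,}
\end{align*}
with the stated grading by $\Cl(Y) \cong \Pic_{\arb{G}}(\arb{G}/\arb{H})$. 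So the content of Theorem~\ref{th-cox-gen} is precisely the identification $\Rm(\arb{Y}) \cong \Rm(Y)$ as graded rings, compatibly with $\Cl(\arb{Y}) \cong \Pic_{\arb{G}}(\arb{G}/\arb{H}) \cong \Cl(Y)$.

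The key step is therefore to prove $\Rm(\arb{Y}) \cong \Rm(Y)$. Here I would use that $\arb{\pi}: G/H \to \arb{G}/\arb{H}$ is a good geometric quotient by the torus $T_{\arb{\Dm}} \coloneqq (\C^*)^{\arb{\Dm}}$, and that this extends (as noted in the Remark preceding the theorem) to a good geometric quotient $\arb{\pi}: X \to \arb{X}$ of the ambient toric varieties, restricting to a good geometric quotient $Y \to \arb{Y}$. The fan-theoretic compatibility is exactly what the commutative diagram in that Remark records: $(\Nm_\un)_\Q \hookrightarrow \Nm_\Q \xrightarrow{\arb{\pi}_*} \arb{\Nm}_\Q$, and $\Sigma$ was built as the preimage of $\arb{\Sigma}$. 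The standard fact that the Cox ring is a quotient-theoretic invariant — more concretely, that for a good geometric quotient by a quasitorus with the quotient map inducing an isomorphism on class groups one recovers the same total coordinate ring — then gives $\Rm(\arb{Y}) \cong \Rm(Y)$. Equivalently, one can argue directly: the canonical quotient presentation $\widetilde{X} \to X$ of Section~\ref{two} also presents $\arb{X}$ after passing through $\arb{\pi}$, so that $\widetilde{\arb{Y}} \cong \widetilde{Y}$, and hence $\Rm(\arb{Y}) \cong \glob{\widetilde{\arb{Y}}} \cong \glob{\widetilde{Y}} \cong \Rm(Y)$; the neatness of $\arb{Y} \hookrightarrow \arb{X}$ noted in the Remark is what licenses the first of these isomorphisms via \cite[Theorem 2.6]{ha}.

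Finally I would track the grading. The isomorphism $\Cl(\arb{Y}) \cong \Cl(Y)$ comes from combining the identification $\Cl(Y) \cong \Pic_{\arb{G}}(\arb{G}/\arb{H})$ (trivial class group of $G/H$ forces $\Cl(Y)$ to be generated by the $G$-invariant prime divisors modulo relations coming from $B$-semiinvariant functions, which is exactly $\Pic_{\arb{G}}(\arb{G}/\arb{H})$ by Proposition~\ref{prop-ex-res}) with the standard description $\Cl(\arb{Y}) \cong \Pic_{\arb{G}}(\arb{G}/\arb{H}) \oplus \bigoplus_l \Z[\arb{Y}_l]$ from \cite[Proposition~4.1.1]{brcox}; under this identification $[D_i] \mapsto [\arb{D}_i]$ and $[Y_l] \mapsto [\arb{Y}_l]$, which matches the asserted degrees $\deg(S_{ij}) = [\arb{D}_i]$ and $\deg(W_l) = [\arb{Y}_l]$. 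The main obstacle I anticipate is bookkeeping rather than anything deep: one must make sure the various maps $\arb{\pi}_*$, $\arb{\pi}^*$, $\gamma$, the inclusion $(\Nm_\un)_\Q \hookrightarrow \Nm_\Q$, and the isomorphism of Proposition~\ref{prop-composedmap} are threaded consistently so that the grading really does come out as stated, and that the definition of the homogenization $h$ — which refers to $\Mm$, $\Nm$, and the $u_l$ of the space $G/H$, not of $\arb{G}/\arb{H}$ — is the one that appears on the right-hand side. Since all of these were arranged precisely for this purpose in Section~\ref{three}, the proof should be short once the quotient argument for $\Rm(\arb{Y}) \cong \Rm(Y)$ is in place.
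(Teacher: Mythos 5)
Your primary route has a gap. You want to first apply Theorem~\ref{th-cox} to $G/H \hookrightarrow Y$ and then show $\Rm\rleft(\arb{Y}\rright) \cong \Rm(Y)$, but Theorem~\ref{th-cox} presupposes $\Gamma(Y, \Om_Y^*) = \C^*$, and the paper explicitly warns, in the text just before this theorem, that this may fail so that ``the ring $\Rm(Y)$ may not be the Cox ring of $Y$.'' The torus $(\C^*)^{\arb{\Dm}}$ adjoined to pass from $\arb{G}/\arb{H}$ to $G/H$ contributes global units on $Y$ that a general fan $\Sigma$ does not kill. So there is no Cox ring $\Rm(Y)$ with which to compare $\Rm\rleft(\arb{Y}\rright)$, and the ``appropriate additional hypothesis'' you hedge toward is exactly what the theorem does not grant you. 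A reduction that goes through ``$\Rm(Y)$ as a Cox ring'' cannot get off the ground.

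Your ``equivalently, one can argue directly'' clause is the right move, and it is essentially what the paper does; however, your phrasing ``the canonical quotient presentation $\widetilde{X} \to X$ of Section~\ref{two} also presents $\arb{X}$'' mislocates the construction. In the situation of Section~\ref{three}, $X$ need not satisfy $\Gamma(X, \Om_X^*) = \C^*$, so there is no Cox-quotient presentation $\widetilde{X} \to X$ to speak of. What the paper does is take the Cox quotient $\pi: \widetilde{X} \to \arb{X}$ of $\arb{X}$ directly, exhibit a closed toric embedding $\phi: \widetilde{X} \hookrightarrow \widehat{X}$ with $\pi = \arb{\pi} \circ p \circ \phi$, and then rerun the argument of Theorem~\ref{th-cox} using the neatness of $\arb{Y} \hookrightarrow \arb{X}$ and \cite[Theorem~2.6]{ha}. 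The identification of the ideal of $\widehat{Y}$ as $\p^\alpha$ and the grading bookkeeping are exactly as you describe. If you delete every mention of $\Rm(Y)$ and keep only the direct computation of $\glob{\pi^{-1}\rleft(\arb{Y}\rright)}$ from the Cox quotient of $\arb{X}$, your argument coincides with the paper's.
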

\begin{proof}
As in Section~\ref{two}, we construct the good quotient
$p: \widehat{X} \to X$, and
we have the canonical quotient construction
$\pi: \widetilde{X} \to \arb{X}$
as well as a canonical toric closed embedding
$\phi: \widetilde{X} \to \widehat{X}$
such that $\pi = \arb{\pi} \circ p \circ \phi$. The result
now follows as in Section~\ref{two}.
\end{proof}

\begin{theorem}
If $\arb{H}$ is connected, $\Rm\rleft(\arb{Y}\rright)$ is a factorial ring.
\end{theorem}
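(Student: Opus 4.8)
The plan is to repeat the proof of Theorem~\ref{thm-fact-fact} in the present, more general setting. The starting point is that $\arb{H}$ connected forces $H$ connected: by construction $H$ is the graph of a homomorphism $\arb{H} \to (\C^*)^{\arb{\Dm}}$, hence isomorphic to $\arb{H}$ as an algebraic group. Writing $G = \arb{G}^{ss} \times C$ with torus factor $C = \arb{C} \times (\C^*)^{\arb{\Dm}}$, the group $\Gamma(G/H, \Om^*_{G/H})/\C^*$ is, as for any spherical homogeneous space, naturally the subgroup $\X(C)^H \subseteq \X(C)$ of characters whose pullback to $G$ is trivial on $H$, and since $H$ is connected the quotient $\X(C)/\X(C)^H$ is torsion-free; so I would fix a complementary direct summand $\X(C)^\dagger$.

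Next I would re-choose the prime elements $f_1, \ldots, f_r \in \glob{G/H}$ --- this is harmless, since they are determined only up to units, and the isomorphism $\Rm(\arb{Y}) \cong \glob{\widetilde{Y}}$ produced in the proof of Theorem~\ref{th-cox-gen} goes through for any admissible choice of prime elements cutting out the $\arb{D}_i$ --- so that $C$ acts on each $f_{ij}$ through a character in $\X(C)^\dagger$. As in the proof of Theorem~\ref{thm-fact-fact}, for every $1 \le k \le m$ there is then a one-parameter subgroup of $C$ acting nontrivially only on the coordinate $T_k$ of the factor $T = \Spec(\C[\Mm_\un])$ of $\widehat{X}$; because the fan defining $\widehat{X}$ involves none of the lattice generators $w_k$, one has a toric splitting $\widehat{X} \cong \widetilde{X} \times T$ on which this subgroup acts by translation in the $T$-factor. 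Since $\widehat{Y}$ is $G$-invariant it is stable under this action, so $\widehat{Y} \cong \widetilde{Y} \times T$ with $\widetilde{Y} = \widehat{Y} \cap \widetilde{X} = \pi^{-1}(\arb{Y})$. Passing to global sections, $\glob{\widehat{Y}}$ is a Laurent polynomial ring over $\glob{\widetilde{Y}} \cong \Rm(\arb{Y})$; as $\glob{\widehat{Y}}$ is factorial by Proposition~\ref{prop-r} and a commutative ring is factorial whenever such a Laurent extension of it is, $\Rm(\arb{Y})$ is factorial.

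The main obstacle is not a new idea but the verification that the ingredients borrowed from Section~\ref{two} --- the construction of $\widehat{X}$ and $\widehat{Y}$, the product decomposition $\widehat{X} \cong \widetilde{X} \times T$, and above all the factoriality of $\glob{\widehat{Y}}$ in Proposition~\ref{prop-r} --- remain valid here, where we may have $\Gamma(Y, \Om_Y^*) \ne \C^*$. The factoriality of $\glob{\widehat{Y}}$ only uses that $\widehat{Y}$ is a quasiaffine spherical embedding, and all of these objects were already set up in the proof of Theorem~\ref{th-cox-gen}; one must also confirm that re-choosing the $f_i$, possibly by units involving the $\epsilon^{\eta_i}$, does not affect that proof. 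Granting this, the argument above goes through.
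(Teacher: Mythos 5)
Your proof takes essentially the same route as the paper: re-choose the primes $f_i$ so that $C$ acts on them through characters in a complement $\X(C)^\dagger$ of $\X(C)^H$, deduce the splitting $\widehat{Y}\cong\widetilde{Y}\times T$, and descend factoriality from $\glob{\widehat{Y}}$ via Proposition~\ref{prop-r}. Where you assert and then flag for verification that the re-choice of primes does not affect the identification $\Rm\rleft(\arb{Y}\rright)\cong\glob{\widetilde{Y}}$, the paper discharges this explicitly by constructing the toric variety $X'$ from the new primes $f'_i = c_if_i$ (with $c_i$ a product of the $g_k$'s, not of the $\epsilon^{\eta_i}$) and exhibiting a toric isomorphism $X'\cong X$ fixing $G/H$ and $Y$, so that $\widetilde{X}$ embeds compatibly into both $\widehat{X}$ and $\widehat{X}'$.
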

\begin{proof}
We embed $Y$ into another toric variety $X'$ using primes $f'_i \in \glob{G/H}$ 
satisfying the requirements of the proof of Theorem \ref{thm-fact-fact} instead of the primes $f_i$.
The $f'_i$ can be chosen in such a way that for each $1 \le i \le r$ there is a unit $c_i$
such that $f'_i = c_if_i$ and $c_i$ is the product of elements of $\{g_k\}_{k=1}^m$.
As a basis for the $G$-module $\langle G\cdot f'_i\rangle$ we then choose
$\{c_if_{ij}\}_{j=1}^{s_i}$. In this case, there is a toric isomorphism
$X' \cong X$ which fixes $G/H$ and therefore $Y$ as well.
We obtain the following commutative diagram.
\begin{align*}
\xymatrix{
& \widehat{X}' 
\ar[r]^{p'} & X' \ar[d]^\cong  \\
\widetilde{X} \ar[r]_{\phi} \ar[ru]^{\phi'} & \widehat{X} \ar[r]_p &
X \ar[r]_{\arb{\pi}} & \arb{X}
}
\end{align*}
The factoriality of $\Rm\rleft(\arb{Y}\rright)$ now follows as in Theorem \ref{thm-fact-fact}.
\end{proof}

The following Theorem~\ref{th-cox-hor} follows directly
from \cite[Theorem~4.3.2]{brcox}. In order to be self-contained,
we give another proof.

\begin{theorem}
\label{th-cox-hor}
If $\arb{G}/\arb{H}$ is horospherical, we have
\begin{align*}
\Rm\rleft(\arb{Y}\rright) \cong \Rm\rleft(\arb{G}/\arb{P}\rright)[W_1, \ldots W_n]\text{,}
\end{align*}
where $\arb{P} \coloneqq N_{\arb{G}}\rleft(\arb{H}\rright)$.
\end{theorem}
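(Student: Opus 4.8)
The plan is to apply the Main Theorem (in the form of Theorem~\ref{th-cox-gen}) to the horospherical homogeneous space $\arb{G}/\arb{H}$ and then identify the resulting ring explicitly. The key point is that for a horospherical homogeneous space $\arb{G}/\arb{H}$, the stabilizer $\arb{H}$ contains a maximal unipotent subgroup of $\arb{G}$ and $\arb{P} = N_{\arb{G}}(\arb{H})$ is a parabolic subgroup with $\arb{H} \supseteq (\arb{P},\arb{P})$, so that $\arb{G}/\arb{H}$ is (after the construction of Section~\ref{three}) a torus bundle over the flag variety $\arb{G}/\arb{P}$. First I would recall from Proposition~\ref{prop-grading} that $\glob{G/H}$ is $\Mm$-graded in the horospherical case, so the prime ideal $\p = \I(G/H)$ is $\Mm$-homogeneous. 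This means that for every $f \in \p$, only a single homogeneous component is nonzero on each relevant piece; more precisely, $\p$ is generated by $\Mm$-homogeneous elements, and for an $\Mm$-homogeneous element $f$ of weight $\mu$ the homogenization $f^\alpha$ equals $f$ itself (the monomials $\prod_l W_l^{\langle u_l,\mu\rangle}$ and the denominator $\prod_l W_l^{\ord_{u_l}(f)}$ cancel since $\ord_{u_l}(f) = \langle u_l,\mu\rangle$ for every $l$). Hence $f^h = f^\beta$ for such $f$, i.e.\ the homogenization does nothing except the substitution $T_k \mapsto 1$.

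Next I would analyze the structure of $\p$ and of $S(V)$ under the $T_k \mapsto 1$ substitution. Since $\arb{G}/\arb{H}$ is horospherical, the ring $\glob{G/H}$ is the multi-Rees-type algebra associated to the line bundles on $\arb{G}/\arb{P}$ given by the $\arb{D}_i$; concretely $\glob{G/H}^U$ is a polynomial ring in $f_1,\dots,f_r$ over $\glob{G/H}^{U}$'s unit part, and after passing to $G$ the elements $f_{ij}$ with their $G$-module structure realize $\arb{G}/\arb{P}$ as the image of $V^*$. The point is that $\beta$ identifies $S(V)[W_1,\dots,W_n]/(\p^\beta)$ with $\Rm(\arb{G}/\arb{P})[W_1,\dots,W_n]$: the ideal $\p^\beta$ (obtained from $\p$ by setting $T_k=1$) is precisely the ideal of relations among the $f_{ij}$ cutting out the affine cone over $\arb{G}/\arb{P}$ in $V^* = V_1^* \times \dots \times V_r^*$, which by Cox's description (or by the fact that $\arb{G}/\arb{P}$ has free, finitely generated class group and the $\arb{D}_i$ generate it) is exactly $\Rm(\arb{G}/\arb{P})$, while the $W_l$ remain free polynomial variables untouched by any relation. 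I would make this identification compatible with the gradings: $\deg(S_{ij}) = [\arb{D}_i]$ generates $\Cl(\arb{G}/\arb{P})$, and $\deg(W_l) = [\arb{Y}_l]$ together with the $[\arb{D}_i]$ generate $\Cl(\arb{Y})$ freely, matching the obvious $\Cl(\arb{Y}) = \Cl(\arb{G}/\arb{P}) \oplus \bigoplus_l \Z[\arb{Y}_l]$ since $\arb{G}/\arb{H}$ has the same class group as $\arb{G}/\arb{P}$ up to the horospherical torus part, and each new $G$-invariant divisor $\arb{Y}_l$ contributes a free generator.

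The main obstacle I anticipate is making the identification $S(V)/(\p^\beta) \cong \Rm(\arb{G}/\arb{P})$ clean and rigorous, rather than hand-wavy. Two subtleties must be handled: first, one must check that setting $T_k \mapsto 1$ really does send $\p$ onto the full defining ideal of the affine closure $\overline{\arb{G}/\arb{P}} \subseteq V^*$ and not something smaller — this uses that in the horospherical case the map $\Phi$ factors through the torus coordinates trivially on the semisimple part, so the $\arb{G}^{ss}$-orbit closure structure is exactly that of the generators $f_{ij}$; second, one must verify that the affine variety $\Spec(S(V)/\p^\beta)$ is the \emph{characteristic space} (total coordinate space) of $\arb{G}/\arb{P}$, which amounts to checking that the $S_{ij}$ are the canonical sections of the $\arb{D}_i$ and that $\arb{D}_1,\dots,\arb{D}_r$ generate $\Cl(\arb{G}/\arb{P}) = \Pic(\arb{G}/\arb{P})$ freely — true because $\arb{P}$ is parabolic and the Schubert divisors associated to the simple roots moved by $\arb{P}$ form a basis of the Picard group, and these are precisely the $\arb{B}$-invariant prime divisors $\arb{D}_i$. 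Once these identifications are in place, the isomorphism $\Rm(\arb{Y}) \cong \Rm(\arb{G}/\arb{P})[W_1,\dots,W_n]$ and its compatibility with the gradings follow immediately from Theorem~\ref{th-cox-gen} and the computation $f^h = f^\beta$ above, completing the proof.
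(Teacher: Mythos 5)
Your first step coincides with the paper's: you invoke Proposition~\ref{prop-grading} to conclude that $\p$ is $\Mm$-homogeneous, note that $f^\alpha = f$ for an $\Mm$-homogeneous $f$ (so $f^h = f^\beta$), and conclude that the ideal $(f^h;f\in\p)$ is generated by elements not involving the $W_l$, yielding $\Rm(\arb{Y}) \cong R[W_1,\ldots,W_n]$ with $R = S(V)/\p^\beta$ depending only on the homogeneous space. This is exactly the paper's reduction. The divergence is in how $R$ is identified with $\Rm(\arb{G}/\arb{P})$.

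You attempt a direct identification: argue that $\p^\beta$ is the defining ideal of the characteristic space (total coordinate space) of $\arb{G}/\arb{P}$ inside $V^*$, using the standard facts that $\arb{P}$ is parabolic, that Schubert divisors freely generate $\Pic(\arb{G}/\arb{P})$, and that these Schubert divisors are the $\arb{D}_i$. You candidly flag the two subtleties — that $\beta$ sends $\p$ onto the full ideal of $\overline{\arb{G}/\arb{P}}$, and that $\Spec(S(V)/\p^\beta)$ is really the characteristic space — but you leave both unresolved. These are precisely the points where the argument could go wrong if stated this loosely; in particular, the claim that setting $T_k\mapsto 1$ exactly kills the torus directions in $\glob{G/H}$ (rather than collapsing them onto some proper subvariety of $V^*$) needs an argument.

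The paper closes this gap by a cleaner structural route: it exhibits $\arb{G}/\arb{H}$ as $(\arb{G}\times\arb{T})/\arb{P}$ with $\arb{T}=\arb{P}/\arb{H}$, lifts $\arb{P}$ to a subgroup $P\subseteq G$ of the enlarged group with the same spherical construction as in Section~\ref{three}, and then uses Proposition~\ref{prop-ex-res} (every character of $P$ extends to $G$) to split $G/H\cong G/P\times\arb{T}$ as a product. This product decomposition makes the effect of $T_k\mapsto 1$ transparent: the $g_k$ parametrize the $\arb{T}$ factor, the $f_{ij}$ parametrize $G/P$, and $\p^\beta$ is literally the defining ideal of $\overline{G/P}$, whose coordinate ring is the Cox ring of $\arb{G}/\arb{P}$ by the same theory applied to the (horospherical, in fact flag) space $\arb{G}/\arb{P}$. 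Your Schubert-divisor argument is true as a statement about flag varieties, but you never actually connect it to the ideal $\p^\beta$ coming out of $\Phi$; the paper's explicit $G$-equivariant product isomorphism is what supplies exactly that connection. So the plan is salvageable, but as written it has a genuine gap at the central identification step that the paper handles with a different and more robust mechanism.
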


\begin{proof}
As $\arb{G}/\arb{H}$ being horospherical implies
that $G/H$ is horospherical as well, the ideal $\p$ is $\Mm$-graded
by Proposition \ref{prop-grading}. This means,
for each $f \in \p$ and $\mu \in \Mm$ we
have $f^{(\mu)} \in \p$. As $\rleft(f^{(\mu)}\rright)^\alpha = f^{(\mu)}$,
the ideal $(f^h; f \in \p)$ can be generated by elements which do not
contain any of the variables $W_l$. It follows that
$\Rm\rleft(\arb{Y}\rright) \cong R[W_1, \ldots, W_n]$ for some ring $R$
which depends only on the homogeneous space.

It only remains to show that $R \cong \Rm\rleft(\arb{G}/\arb{P}\rright)$.
We have canonical maps
\begin{align*}
\xymatrix{
\arb{G} \ar[r] & \arb{G}/\arb{H} \ar[r]^-{\cong} & 
\rleft(\arb{G} \times \arb{T}\rright)/\arb{P} \ar[r] & \arb{G}/\arb{P}\text{,}
}
\end{align*}
where $\arb{T} \coloneqq \arb{P}/\arb{H}$ is a torus, 
$\arb{P}$ acts on $\arb{T}$ via $\arb{P} \to \arb{T}$, and $\arb{P}$ acts
on $\arb{G} \times \arb{T}$ via $p \cdot (g, t) \coloneqq (gp^{-1}, pt)$.
The last map has fibers isomorphic to $\arb{T}$ and is a trivial fibration
over the open orbit of any Borel subgroup of $\arb{G}$ (cf.~\cite[after Theorem~2.2]{bm}).
The pullbacks of the $\arb{B}$-invariant prime divisors in $\arb{G}/\arb{P}$
are exactly the $\arb{B}$-invariant prime divisors in $\arb{G}/\arb{H}$.
In particular, $\arb{P}$ acts from the right on each $\arb{f}_i \in \C\rleft[\arb{G}\rright]$
with an extension of the character $\arb{\chi}_i \in \X\rleft(\arb{H}\rright)$ which we
will also call $\arb{\chi}_i \in \X\rleft(\arb{P}\rright)$. We define
\begin{align*}
P &\coloneqq \rleft\{\rleft(p, \arb{\chi}_1(p), \ldots, \arb{\chi}_r(p)\rright); p \in \arb{P}\rright\} \subseteq G\text{.}
\end{align*}
It is not difficult to see that $G/H$ is isomorphic to $\rleft(G \times \arb{T}\rright)/P$
where $P \cong \arb{P}$ acts on $G \times \arb{T}$ via $p \cdot (g, t) \coloneqq (gp^{-1}, pt)$.
As all characters of $P$ can be extended to $G$ (cf.~Proposition \ref{prop-ex-res}), we
obtain $G/H \cong G/P \times \arb{T}$, and the result follows.
\end{proof}

\section{Examples}
\label{four}

\begin{example}
Consider $G \coloneqq \SL(3)$ and $H \coloneqq U$, the set of unipotent
upper triangular matrices. 
Let $B \subseteq G$ be the Borel subgroup
of upper triangular matrices, and let $G$ act linearly on $\C^3 \times \C^3$ by acting
naturally on the first factor and with the contragredient action on the second
factor. We denote the coordinates of the first factor by $S_{11}, S_{12}, S_{13}$
and the coordinates of the second factor by $S_{21}, S_{22}, S_{23}$. Then the point
$((1,0,0), (0,0,1))$ has isotropy group $U$, and its orbit is
$\V(S_{11}S_{21}+S_{12}S_{22}+S_{13}S_{23})$. The homogeneous space
$G/H$ is horospherical with $B$-invariant prime divisors $D_1 \coloneqq \V(S_{13})$ and $D_2 \coloneqq \V(S_{21})$
and has trivial divisor class group. Consider the embedding $Y$ corresponding
to a fan containing exactly $n$ one-dimensional cones.
By Theorem~\ref{th-cox-hor} and setting $P\coloneqq N_G(H)$, we obtain
\begin{align*}
\Rm(G/P) &\cong \C[S_{11}, S_{12}, S_{13}, S_{21}, S_{22}, S_{23}] \rleft/ \middle(S_{11}S_{21}+S_{12}S_{22}+S_{13}S_{23}\rright) \\
\Rm(Y) &\cong \Rm(G/P)[W_1, \ldots, W_n]\text{.}
\end{align*}
\end{example}

\begin{example}
\label{ex:comp}
Consider $G \coloneqq \SL(d)$ for $d \ge 3$ and $H \coloneqq \SL(d-1)$ embedded
as the lower-right entries of $\SL(d)$. The case $d=3$ has
been studied in \cite{pauer1} and \cite{pauersu}. Let $B \subseteq G$ be the Borel subgroup
of upper triangular matrices, and let $G$ act linearly on $\C^d \times \C^d$ by acting
naturally on the first factor and with the contragredient action on the second
factor. We denote the coordinates of the first factor by $S_{11}, \ldots, S_{1d}$
and the coordinates of the second factor by $S_{21}, \ldots, S_{2d}$. Then the point
$((1,0,\ldots,0), (1,0,\ldots,0))$ has isotropy group $\SL(d-1)$, and its orbit is
$\V\big(\sum_{j=1}^d S_{1j}S_{2j} - 1\big)$. The homogeneous space
$G/H$ is spherical with $B$-invariant prime divisors $D_1 \coloneqq \V(S_{1d})$ and $D_2 \coloneqq \V(S_{21})$
as well as affine with factorial coordinate ring.
By Theorem~\ref{prop-trop}, we obtain $\Vm = \{v_1^* + v_2^* \le 0\}$.
Consider the embedding $Y$ corresponding to the fan containing
the one-dimensional cones having primitive lattice generators
$(p_1, q_1), \ldots, (p_n, q_n) \in \Nm$ with respect
to the basis $\{v_1, v_2\}$.
The following picture illustrates $\Nm_\Q$.
\begin{align*}
\begin{tikzpicture}
     \begin{scope}[help lines]
     \draw (-1.5,0) -- (1.5,0);
     \draw (0,-1.5) -- (0,1.5);
     \draw (-1.5, 1.5) -- (1.5, -1.5);
     \end{scope}
     \fill[opacity=0.1] (-1.5, 1.5) -- (1.5, -1.5) -- (-1.5, -1.5) -- cycle;
     \draw (1,-1pt) -- (1,1pt) node[anchor=south west]{\tiny{$v_1 = \orho(D_1)$}};
     \draw (-1pt,1) -- (1pt,1) node[anchor=south west]{\tiny{$v_2 = \orho(D_2)$}};
     \fill (1, 0) circle (2pt) ;
     \fill (0, 1) circle (2pt) ;
     \path (-0.75, -0.75) node {{$\mathcal{V}$}};
\end{tikzpicture}
\end{align*}
By Theorem~\ref{th-cox}, we obtain 
\begin{align*}
\Rm(Y) \cong \C[S_{1j}, S_{2j}, W_1, \ldots, W_n]_{j=1}^d \rleft/ \middle(\sum_{j=1}^d S_{1j}S_{2j} - W_1^{-p_1-q_1}\cdots W_n^{-p_n-q_n}\rright) \text{.}
\end{align*}
\end{example}

\begin{example}
Consider $\arb{G} \coloneqq \SL(2)$ and $\arb{H} \coloneqq T$,
the diagonal torus. Let $\arb{B} \subseteq \arb{G}$ be the
Borel subgroup of upper triangular matrices. Then, considering 
 the orbit of $([1:0], [0:1])$ in $\Pb^1 \times \Pb^1$, we obtain
$\arb{G}/\arb{H} \cong \Pb^1 \times \Pb^1 \setminus \operatorname{diag}\rleft(\Pb^1\rright)$,
and there are two $\arb{B}$-invariant prime divisors. From
\begin{align*}
\C\rleft[\arb{G}\rright] = \C[M_{11}, M_{12}, M_{21}, M_{22}]/\rleft(M_{11}M_{22} - M_{12}M_{21} - 1\rright)\text{,}
\end{align*}
we obtain $\arb{f}_1 = M_{21}$ and $\arb{f}_2 = M_{22}$. We can now construct $G = \arb{G} \times
\rleft(\C^*\rright)^{\arb{\Dm}}$
and $H$. It is not difficult to see that $G/H$ is the
orbit of the point $((1,0), (0,1), 1)$ in $\C^2 \times \C^2 \times \C^*$,
where $\arb{G}$ acts naturally on both factors $\C^2$ and trivially on $\C^*$ while
$\rleft(\C^*\rright)^{\arb{\Dm}}$
acts with the weights $-\eta_1$ and $-\eta_2$ respectively on the factors $\C^2$
as well as with $-\eta_1 -\eta_2$ on $\C^*$.
Denoting by $S_{11}$, $S_{12}$ and $S_{21}$, $S_{22}$ the coordinates of the factors $\C^2$
and by $T_1$ the coordinate of the factor $\C^*$, we obtain
$G/H = \V\rleft(S_{11}S_{22} - S_{12}S_{21} - T_1\rright)$.
By Theorem~\ref{prop-trop}, we obtain $\Vm = \{v_1^* + v_2^* \le w_1^*\}$. Using
Proposition~\ref{prop-composedmap} and Proposition~\ref{prop-ex-res},
we see that $\arb{\pi}_*(w_1)$ is a basis of $\arb{\Nm}_\Q$ and that
$\arb{\pi}_*(v_1) = \arb{\pi}_*(v_2) = -\arb{\pi}_*(w_1)$.
By Proposition~\ref{prop-valcone}, $\arb{\Vm} = \{\arb{\pi}_*(w_1)^* \ge 0\}$ follows.
The following picture illustrates $\arb{\Nm}_\Q$.
\begin{align*}
\begin{tikzpicture}
     \begin{scope}[help lines]
     \draw (-2,0) -- (2,0);
     \draw (0, -0.1) -- (0, 0.1);
     \end{scope}
     \fill[opacity=0.1] (2, -0.05) -- (0, -0.05) -- (0, 0.05) -- (2, 0.05) -- cycle;
     \draw (1,-1pt) -- (1,1pt) node[anchor=south]{\tiny{$\arb{\pi}_*(w_1)$}};
     \fill (-1, 0) circle (2pt) node[anchor=north]{\tiny{$\orho(\arb{D}_2)$}} node[anchor=south]{\tiny{$\orho(\arb{D}_1)$}} ;
     \path (1, 0) node[anchor=north] {\tiny{$\arb{\mathcal{V}}$}};
\end{tikzpicture}
\end{align*}
By Theorem~\ref{th-cox-gen}, we have
\begin{align*}
\Rm\rleft(\arb{G}/\arb{H}\rright) \cong \C[S_{11},S_{12},S_{21},S_{22}]/\rleft(S_{11}S_{22} - S_{12}S_{21}-1\rright)\text{.}
\end{align*}

The only nontrivial embedding of $\arb{G}/\arb{H}$ is the embedding
$\arb{Y} \coloneqq \Pb^1 \times \Pb^1$ given by the cone $\Q_{\ge 0}\arb{\pi}_*(w_1)$.
In this case, Theorem~\ref{th-cox-gen} yields
\begin{align*}
\Rm\rleft(\arb{Y}\rright) \cong \C[S_{11},S_{12},S_{22},S_{21},W_1]/\rleft(S_{11}S_{22} - S_{12}S_{21}-W_1\rright)\text{,}
\end{align*}
which is isomorphic to the polynomial ring in four variables.
\end{example}

\begin{example}[{cf.~\cite[Example~4.2]{cfact}}]
Consider $\arb{G} \coloneqq \SL(2)$ and $\arb{H} \coloneqq N_{\arb{G}}\rleft(T\rright)$
where $T$ is the diagonal torus.
Let $\arb{B} \subseteq \arb{G}$ be the Borel subgroup of upper triangular matrices.
The group $\arb{H}$ consists of two connected components, the identity
component
$\arb{H}^\circ = \big\{\big(\begin{smallmatrix}\lambda & 0 \\ 0 & \lambda^{-1}\end{smallmatrix}\big); \lambda \in \C^*\big\}$ and a second component
$\arb{H}^\dagger = \big\{\big(\begin{smallmatrix} 0 & \lambda & \\ -\lambda^{-1} & 0\end{smallmatrix}\big); \lambda \in \C^*\big\}$. There exists a character $\arb{\chi} \in \X\rleft(\arb{H}\rright)$
with $\arb{\chi}|_{\arb{H}^\circ} = 1$ and $\arb{\chi}|_{\arb{H}^\dagger} = -1$.
Consider the complex vector space $\Sym(2\times 2) = 
\big\{\big(\begin{smallmatrix}s_{12} & s_{13} \\ s_{13} & s_{11}\end{smallmatrix}\big); s_{ij} \in \C^*\big\}$
of symmetrical 2 by 2 matrices. Let $\arb{G} \times \C^*$
act on $\Sym(2\times 2) \times \C^*$ via $(g, t)\cdot (x, y) \coloneqq (t^{-1}gxg^T, t^{-2}y)$. The isotropy
group of the point $\big(\big(\begin{smallmatrix}0 & 1 \\ 1 & 0\end{smallmatrix}\big), 1\big)$ is $H' \coloneqq \rleft\{(h, \arb{\chi}(h)), h \in \arb{H}\rright\}$ and its orbit is the
closed subset $\V(S_{11}S_{12} - S_{13}^2 - T_1)$. We observe that $\rleft(\arb{G} \times \C^*\rright)/H'$ is
a spherical homogeneous space with one $\arb{B} \times \C^*$-invariant
prime divisor $D_1 \coloneqq \V(S_{11})$ and that it
coincides with $G/H$ as defined in Section~\ref{three}. By Theorem~\ref{prop-trop}, we
obtain $\Vm = \{2v_1^* \le w_1^*\}$. Using
Proposition~\ref{prop-composedmap} and Proposition~\ref{prop-ex-res},
we see that $\arb{\pi}_*(w_1)$ is a basis of $\arb{\Nm}_\Q$ and that
$\arb{\pi}_*(v_1) = -2\arb{\pi}_*(w_1)$.
By Proposition~\ref{prop-valcone}, $\arb{\Vm} = \{\arb{\pi}_*(w_1)^* \ge 0\}$ follows.
The following picture illustrates $\arb{\Nm}_\Q$.
\begin{align*}
\begin{tikzpicture}
     \begin{scope}[help lines]
     \draw (-2.5,0) -- (2.5,0);
     \draw (0, -0.1) -- (0, 0.1);
     \end{scope}
     \fill[opacity=0.1] (2.5, -1pt) -- (0, -1pt) -- (0, 1pt) -- (2.5, 1pt) -- cycle;
     \draw (2,-1pt) -- (2,1pt) ;
     \draw (-1,-1pt) -- (-1,1pt) ;
     \draw (1,-1pt) -- (1,1pt) node[anchor=south]{\tiny{$\arb{\pi}_*(w_1)$}};
     \fill (-2, 0) circle (2pt) node[anchor=south]{\tiny{$\orho(\arb{D_1})$}} ;
     \path (1.25, 0) node[anchor=north] {\tiny{$\arb{\mathcal{V}}$}};
\end{tikzpicture}
\end{align*}
Consider the trivial embedding $\arb{Y} \coloneqq \arb{G}/\arb{H}$.
From a geometric viewpoint, this is the complement
of a conic in $\Pb^2$.
We have $\Cl\rleft(\arb{Y}\rright) \cong \Z/2\Z$ with generator 
$\rleft[\arb{D}_1\rright] \in \Cl\rleft(\arb{Y}\rright)$.
By Theorem~\ref{th-cox-gen}, we obtain
\begin{align*}
\Rm\rleft(\arb{Y}\rright) \cong \C[S_{11}, S_{12}, S_{13}]/(S_{11}S_{12} - S_{13}^2 - 1)\text{,}
\end{align*}
which is a non-factorial ring.
\end{example}

\section{Comparison with the approach of Brion}
\label{five}

When $Y$ is a spherical $G$-variety satisfying $\glob{Y} = \C$, Brion defines the
\em equivariant Cox ring \em of $Y$, which is graded by 
the group $\Cl_{G}(Y)$ of isomorphism classes of $G$-linearized
divisorial sheaves on $Y$, as
\begin{align*}
\Rm_{G}(Y) \coloneqq \bigoplus_{[\Fm] \in \Cl_{G}(Y)} \Gamma(Y, \Fm)\text{,}
\end{align*}
where again some accuracy is required in order to define a multiplication law (cf.~\cite[4.2]{brcox}).
We have $\Rm_G(Y) \cong \Rm(Y) \otimes_\C \C[C]$, where the characters of the torus
$C$ correspond to the various choices of linearizations.

In order to obtain a description of $\Rm_G(Y)$, Brion first computes the Cox ring
of the associated wonderful variety $Y'$.
According to \cite[6.1]{Luna:typea}, the wonderful variety $Y'$ is obtained as follows.
The equivariant automorphism group of $G/H$ can be identified with $N_G(H)/H$, hence
$N_G(H)$ acts on the set $\Dm$ of $B$-invariant prime divisors in $G/H$.
The subgroup of $N_G(H)$ which stabilizes $\Dm$ is called the \em spherical closure \em of $H$. 
It can be written as $H' \times C$ where $H' \subseteq G' \coloneqq G^{ss}$.
Then $G'/H'$ is a spherical homogeneous space admitting
a wonderful completion $G'/H' \hookrightarrow Y'$.

We consider the Borel subgroup $B' \coloneqq B \cap G'$ as well as the maximal torus $T' \coloneqq T \cap G'$,
define $\X \coloneqq \X(B') = \X(T')$, and denote by $\X^+ \subseteq \X$ the subset of dominant weights.
There is a natural inclusion $\epsilon: \X \hookrightarrow \C[T']^*$, $\chi \mapsto \epsilon^\chi$.
The set of $B'$-invariant prime divisors in $G'/H'$ can be identified with $\Dm = \{D_1, \ldots, D_r\}$.
We denote by $\omega_i$ the left $B'$-weight of the equation $f'_i$ of the
pullback of $D_i$ under $G' \to G'/H'$. There is a natural bijection
between the set $\{\gamma_1, \ldots, \gamma_s\}$ of spherical roots of $G'/H'$
and the set  $\{Y'_1, \ldots, Y'_s\}$ of $G'$-invariant
prime divisors in $Y'$.

The Cox ring $\Rm(Y')$ is realized as a subring of
\begin{align*}
\C[G' \times T'] = \C[G'] \otimes_{\C} \C[T'] = \bigoplus_{\eta \in \X} \C[G']\otimes_\C \C\epsilon^\eta\text{.}
\end{align*}
We will require the following notation: when $V$ is a $G'$-module and $\chi \in \X^+$, we denote by $V_{(\chi)}$
the corresponding isotypic component of $V$, i.e.~the sum of all
irreducible $G'$-submodules of highest weight $\chi$.
We denote by $s_l \in \Gamma(Y', \Om_{Y'}(Y'_l))$ and $s_{D_i} \in \Gamma(Y', \Om_{Y'}(D_i))$
the respective canonical sections.
Finally, we denote by $K'$ the intersection of the kernels of all characters of $H'$.
Note that $G'/K'$ is always quasiaffine, but
may fail to have trivial divisor class group when
$H'$ is not connected.

\begin{theorem}[cf.~{\cite[Theorem~3.2.3]{brcox}}]
\label{thm:brionw}
We define the following partial order on $\X$: we write $\chi \preceq \eta$ if $\eta - \chi$
is a linear combination of spherical roots of $G'/H'$ with nonnegative coefficients.
Then we have
\begin{align*}
\Rm(Y') \cong \bigoplus_{\substack{(\chi, \eta) \in \X^+ \times \X\\ \chi \preceq \eta}}
\glob{G'/K'}_{(\chi)} \otimes_\C \C\epsilon^\eta\text{.}
\end{align*}
This isomorphism sends $s_l \mapsto 1 \otimes \epsilon^{\gamma_l}$ and $s_{D_i} \mapsto f'_i \otimes \epsilon^{\omega_i}$.
Furthermore, $\Rm(Y')$ is the Rees algebra associated to
the ascending filtration
\begin{align*}
\mathscr{F}^{\eta}(\glob{G'/K'}) \coloneqq \bigoplus_{\substack{\chi\in\X \\ \chi \preceq \eta}} \glob{G'/K'}_{(\chi)}
\end{align*}
of $\glob{G'/K'}$ indexed by the partially ordered set $\X$.
\end{theorem}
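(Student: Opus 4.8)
The plan is to compute each homogeneous component $\Gamma(Y', \Om_{Y'}(D))$ of the Cox ring directly and then recognize the outcome as the asserted subring of $\C[G' \times T']$. Since $G'$ is semisimple and simply connected, $\X(G') = 0$ and the natural map $\Pic_{G'}(Y') \to \Pic(Y') = \Cl(Y')$ is an isomorphism, so every invertible sheaf on $Y'$ carries a unique $G'$-linearization and each $\Gamma(Y', \Om_{Y'}(D))$ is a finite-dimensional $G'$-module; as $Y'$ is spherical it is multiplicity-free. Hence it suffices to determine, for each class $[D]$, which irreducible modules $V_\chi$ occur in $\Gamma(Y', \Om_{Y'}(D))$ and how the canonical sections $s_l$ and $s_{D_i}$ behave under multiplication. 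Along the way one records that $\glob{G'/K'}$ is a finitely generated multiplicity-free $G'$-module, graded by $\X(H')$ via the residual action of the diagonalizable group $H'/K'$; this uses that $G'/K'$, viewed with its $G' \times (H'/K')$-action, is again spherical.

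First I would parametrize divisor classes on $Y'$ by their $B$-stable representatives $\sum_{l=1}^s a_l Y'_l + \sum_{i=1}^r b_i D_i$, noting that two such are linearly equivalent precisely when their difference is the divisor of a $B$-eigenfunction on $G'/H'$. Using the local structure theorem for the wonderful variety $Y'$ together with the valuative criterion for sections, one obtains that a $B$-eigenfunction $f \in \C(G'/H')^{(B)}$ of $B$-weight $\chi$ lies in $\Gamma\rleft(Y', \Om_{Y'}\rleft(\sum_l a_l Y'_l + \sum_i b_i D_i\rright)\rright)$ if and only if $\langle u_l, \chi \rangle \ge -a_l$ for every $l$ and $\langle \orho(D_i), \chi \rangle \ge -b_i$ for every $i$, where $u_l \in \Nm$ is the vector associated to $Y'_l$. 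The crucial observation is that, for the wonderful completion, $u_1, \ldots, u_s$ are the opposites of the fundamental coweights dual to the spherical roots, so $\langle u_l, \gamma_m \rangle = -\delta_{lm}$; hence the first family of inequalities is exactly the relation $\chi \preceq \eta$ for the appropriate lift $\eta \in \X$ of the class $[D]$ determined by the $a_l$ and by the left $B'$-weights $\omega_i$ of the defining equations $f'_i$ of the $D_i$. I would then check that multiplication by the canonical sections realizes the expected weight shifts: $s_l$ shifts by $\gamma_l$ and $s_{D_i}$ by $\omega_i$, and the restriction of $s_{D_i}$ to the open orbit is $f'_i$. Since $f'_i$ is not $H'$-invariant but transforms on the right by the character $\chi_i$, it is regular on $G'/K'$ but not on $G'/H'$, which is exactly why $\glob{G'/K'}$ rather than $\glob{G'/H'}$ appears. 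Using the coordinate $\epsilon^\eta$ on $\C[T']$ to bookkeep these shifts, this yields a $G'$-equivariant injective homomorphism of graded rings $\Rm(Y') \hookrightarrow \C[G' \times T'] = \bigoplus_{\eta \in \X} \C[G']\epsilon^\eta$ with $s_l \mapsto 1 \otimes \epsilon^{\gamma_l}$ and $s_{D_i} \mapsto f'_i \otimes \epsilon^{\omega_i}$, whose image is $\bigoplus_{(\chi,\eta) \in \X^+ \times \X,\ \chi \preceq \eta} \glob{G'/K'}_{(\chi)} \otimes_\C \C\epsilon^\eta$.

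Finally, this subspace is by construction $\bigoplus_{\eta \in \X} \mathscr{F}^\eta\rleft(\glob{G'/K'}\rright) \otimes_\C \C\epsilon^\eta$, i.e.\ the Rees algebra of the ascending filtration $\mathscr{F}^\bullet$ indexed by $(\X, \preceq)$; the only thing to verify for this to be a subring is $\mathscr{F}^{\eta_1} \cdot \mathscr{F}^{\eta_2} \subseteq \mathscr{F}^{\eta_1 + \eta_2}$, which is immediate from the inclusion $\glob{G'/K'}_{(\chi_1)} \cdot \glob{G'/K'}_{(\chi_2)} \subseteq \bigoplus_{\chi \preceq \chi_1 + \chi_2} \glob{G'/K'}_{(\chi)}$ — and this last inclusion is precisely the fact recalled in Section~\ref{one} that the cone $\Tm$ generated by the differences $\mu_1 + \mu_2 - \mu_3$ with $V_{\mu_3} \subseteq V_{\mu_1} \cdot V_{\mu_2}$ coincides with the cone generated by the spherical roots. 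I expect the main obstacle to be the middle step: pinning down exactly which irreducible $G'$-modules occur in $\Gamma(Y', \Om_{Y'}(D))$ and translating the vanishing orders along the boundary divisors $Y'_l$ into the partial order $\preceq$, which requires the local structure theorem for wonderful varieties together with careful handling of the non-$G'$-invariant $D_i$ and of the distinction between $H'$ and $K'$.
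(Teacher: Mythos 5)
The statement you are proving is not actually proved in this paper: it appears with the marker ``cf.~[Theorem~3.2.3]\cite{brcox}'' in Section~\ref{five} precisely because the author is \emph{quoting} Brion's theorem to set up the comparison with the present paper's approach. There is therefore no in-paper proof to measure your attempt against; what I can do is assess whether your outline is a plausible reconstruction of Brion's argument, and it is.

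Your strategy --- compute each $\Gamma(Y',\Om_{Y'}(D))$ via the valuative criterion for $B$-eigenfunctions, observe that for the wonderful fan one has $\langle u_l,\gamma_m\rangle=-\delta_{lm}$ so that the boundary-divisor inequalities collapse to the partial order $\preceq$, package the result as a subring of $\C[G'\times T']$, and read off the Rees-algebra structure from the fact that $\Tm$ is the cone spanned by the spherical roots --- is the natural (and, as far as I can tell, essentially Brion's) line of argument. The appearance of $K'$ in place of $H'$ is correctly explained: the $f'_i$ are only $(H'$-semi$)$invariants, so sections of divisorial sheaves on $G'/H'$ twisted by characters of $H'$ live in $\glob{G'/K'}$, not in $\glob{G'/H'}$. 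You also correctly identify the two genuinely delicate points: translating vanishing orders along the boundary $Y'_l$ into $\preceq$, and tracking the $D_i$-part of the divisor into the choice of isotypic component $\glob{G'/K'}_{(\chi)}$. What is still only sketched --- and where you would have to be careful writing this out --- is the bookkeeping between the $B$-weight $\chi$ of the \emph{rational function} $f$ and the highest weight of the resulting \emph{section} $f\cdot s_D$ (these differ by the weight of the canonical section $s_D$), the precise matching between $\eta=\sum_l a_l\gamma_l+\sum_i b_i\omega_i$ and the $\Cl(Y')$-degree of $\Om_{Y'}(D)$, and the verification that the assignment on the canonical sections $s_l$, $s_{D_i}$ extends to a well-defined homomorphism of $\C$-algebras (the Cox multiplication has normalization choices built into it). None of these look like obstacles; they are exactly the technicalities you flag at the end.

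One small point worth being explicit about: you call $\glob{G'/K'}_{(\chi)}$ an isotypic component, and for the image identification to match Brion's statement as written you should convince yourself that the $\X$-grading by $\eta$ (not just the $\Cl(Y')$-grading) slices $\Rm(Y')$ into pieces that are identified with $\mathscr{F}^\eta\bigl(\glob{G'/K'}\bigr)$; this is where the multiplicity-freeness of $\glob{G'/K'}$ as a $G'\times(H'/K')$-module, which you invoke, is actually used.
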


\begin{example}
\label{ex:w}
Let $G' \coloneqq \SL(d)$ for $d \ge 3$ act on $Y' \coloneqq \Pb^d \times \Pb^d$
by acting naturally on the first factor and with the
contragredient action on the second factor.
Then $Y'$ is a wonderful variety and the point $((1:0:\ldots:0), (1:0:\ldots:0)) \in Y'$ with
isotropy group $H' \coloneqq \GL(d-1)$ lies in the open $G'$-orbit.
We have
\begin{align*}
\Rm(Y') = \C[X_1, \ldots, X_d, Y_1, \ldots, Y_d]\text{.}
\end{align*}
There is exactly one $G$-invariant prime divisor
$D \coloneqq \V\rleft(X_1Y_1 + \ldots + X_dY_d\rright)$.
The intersection $K'$ of the kernels of all characters of $H'$ is $\SL(d-1)$ as in Example~\ref{ex:comp}.
In particular, we have
\begin{align*}
R \coloneqq \glob{G'/K'} = \C[X_1, \ldots, X_d, Y_1, \ldots, Y_d] \rleft/ \middle(X_1Y_1 + \ldots + X_dY_d - 1\rright) \text{.}
\end{align*}
With the notation from Example~\ref{ex:comp}, $\gamma_1 \coloneqq v_1^*+v_2^* \in \X(B')$ is 
the unique spherical root of $G'/H'$. Theorem~\ref{thm:brionw} yields
an isomorphism
\begin{align*}
\Rm(Y') \cong \bigoplus_{i,j,k=0}^\infty R_{(iv_1^*+jv_2^*)} \otimes_\C \C\epsilon^{iv_1^*+jv_2^*+k\gamma_1}
\end{align*}
sending $X_i \mapsto X_i \otimes \epsilon^{v_1^*}$ and $Y_i \mapsto Y_i \otimes \epsilon^{v_2^*}$. In particular, it sends
\begin{align*}
X_1Y_1 + \ldots + X_dY_d \mapsto 1 \otimes \epsilon^{\gamma_1}\text{.}
\end{align*}
\end{example}

Now assume that the fan associated to the spherical embedding $G/H \hookrightarrow Y$ 
contains exactly the trivial cone $0$ and the one-dimensional cones with primitive lattice generators $u_1, \ldots, u_n \in \Nm$.
The natural
map $G/H \to G'/H'$ extends to a $G$-equivariant morphism
$\phi: Y \to Y'$,
which induces an equivariant homomorphism of graded algebras
\begin{align*}
\phi^*: \Rm(Y') = \Rm_{G'}(Y') \to \Rm_{G}(Y)\text{.}
\end{align*}
The invariant subrings
\begin{align*}
\Rm_{G}(Y)^{G'} &\cong \C[C][W_1, \ldots, W_n]\\
\Rm(Y')^{G'} &\cong \C[Z_1, \ldots, Z_s]
\end{align*}
are polynomial rings over $\C[C]$ and $\C$
where $W_1, \ldots, W_n$ and $Z_1, \ldots, Z_s$ are identified with the canonical sections
corresponding to the $G$-invariant prime divisors in $Y$ and $Y'$ respectively.

\begin{theorem}[{cf.~\cite[Theorem~4.3.2]{brcox}}]
The restricted map
\begin{align*}
\phi^*|_{\Rm(Y')^{G'}}: \Rm(Y')^{G'} \to \Rm_{G}(Y)^{G'}
\end{align*}
sends $Z_i \mapsto W_1^{-\langle u_1, \gamma_i \rangle} \cdots W_n^{-\langle u_n, \gamma_i \rangle}$, and the map
\begin{align*}
\Rm(Y') \otimes_{\Rm(Y')^{G'}} \Rm_{G}(Y)^{G'} \to \Rm_{G}(Y) 
\end{align*}
sending $s' \otimes s \mapsto \phi^*(s')s$ is an isomorphism.
\end{theorem}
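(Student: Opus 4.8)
The plan is to prove the two assertions in turn. First I would compute $\phi^*(Z_i)$ by pulling back the divisor $Y'_i$ to $Y$; this is essentially bookkeeping. The substantive part is the second assertion, that
$\Phi\colon\Rm(Y')\otimes_{\Rm(Y')^{G'}}\Rm_G(Y)^{G'}\to\Rm_G(Y)$, $s'\otimes s\mapsto\phi^*(s')s$, is an isomorphism. I expect surjectivity to follow cheaply from our explicit generators for $\Rm(Y)$, while injectivity is the main obstacle; I would handle it using the freeness over $\Rm(Y')^{G'}$ that comes out of the Rees algebra description in Theorem~\ref{thm:brionw}.

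For the formula, $Z_i$ is the canonical section of $\Om_{Y'}(Y'_i)$, so $\phi^*(Z_i)$ is the canonical section of $\Om_Y(\phi^*(Y'_i))$ and it suffices to determine the Weil divisor $\phi^*(Y'_i)$ on $Y$. The divisor $Y'_i$ is disjoint from the open orbit $G'/H'$, whereas the quotient $G/H\to G'/H'$ sends every $B$-invariant prime divisor of $G/H$ into $G'/H'$; hence $\phi^*(Y'_i)$ is supported only on the $G$-invariant prime divisors $Y_1,\ldots,Y_n$ and is effective. The multiplicity of $Y_l$ is the order along $Y_l$ of a local equation of $Y'_i$, which by the bijection $Y'_i\leftrightarrow\gamma_i$ together with the identification $s_l\mapsto 1\otimes\epsilon^{\gamma_l}$ of Theorem~\ref{thm:brionw} is a $B$-eigenvector of weight $\gamma_i$; evaluating $u_l$ on this weight gives multiplicity $-\langle u_l,\gamma_i\rangle\ge 0$. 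Thus $\phi^*(Y'_i)=\sum_{l=1}^n(-\langle u_l,\gamma_i\rangle)Y_l$, and normalizing canonical sections yields $\phi^*(Z_i)=W_1^{-\langle u_1,\gamma_i\rangle}\cdots W_n^{-\langle u_n,\gamma_i\rangle}$.

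Consequently $\phi^*$ agrees with the inclusion $\Rm_G(Y)^{G'}\hookrightarrow\Rm_G(Y)$ on $\Rm(Y')^{G'}=\C[Z_1,\ldots,Z_s]$, so $\Phi$ is a well-defined, $G$-equivariant, $\Cl_G(Y)$-graded homomorphism. Since $G=G^{ss}\times C$ with $G'=G^{ss}$ we have $\Rm_G(Y)=\Rm(Y)\otimes_\C\C[C]$ and $\C[C]\subseteq\Rm_G(Y)^{G'}$, so for surjectivity it suffices to show that $\phi^*(\Rm(Y'))$ together with the $W_l$ generates $\Rm(Y)$ as a $\C$-algebra. By our description of $\Rm(Y)$ in the Main Theorem and in Theorem~\ref{th-cox-gen}, $\Rm(Y)$ is generated by the $W_l$ and the sections $S_{ij}$; and since the $B$-invariant prime divisors of $G/H$ and their defining sections $f_{ij}$ arise by pullback from $G'/H'$, hence from $Y'$, each $S_{ij}$ lies in $\phi^*(\Rm(Y'))$ up to a unit from $\C[C]$. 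Hence $\Phi$ is surjective.

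For injectivity, Theorem~\ref{thm:brionw} shows that for each $\chi\in\X^+$ the $G'$-isotypic component $\Rm(Y')_{(\chi)}$ is a free $\C[Z_1,\ldots,Z_s]$-module, the spherical roots being linearly independent; hence the source of $\Phi$ is a free, in particular $W_l$-torsion-free, module over $\Rm_G(Y)^{G'}=\C[C][W_1,\ldots,W_n]$, and it embeds into its localization at $W_1\cdots W_n$. Inverting the $W_l$ — equivalently, inverting the $Z_i$ on the source — identifies both sides with the equivariant Cox ring of $G/H=Y\setminus(Y_1\cup\cdots\cup Y_n)$ localized at the relevant units (using that the filtration of Theorem~\ref{thm:brionw} becomes exhaustive), and there $\Phi$ is the isomorphism induced by the quotient $G/H\to G'/H'$. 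Therefore $\Phi$ is injective, which completes the proof. Alternatively the second assertion is \cite[Theorem~4.3.2]{brcox}; the argument above shows in addition that it is consistent with the Main Theorem.
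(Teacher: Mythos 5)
This statement is quoted from Brion (cf.~\cite[Theorem~4.3.2]{brcox}); the paper does not supply its own proof, so there is no ``paper's argument'' to compare against, and I evaluate your sketch on its own merits.

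Your computation of $\phi^*(Z_i)$ has the right structure (the pullback $\phi^*(Y'_i)$ is supported on $Y_1,\ldots,Y_n$, and the coefficient of $Y_l$ is determined by the valuation $u_l$ applied to the weight of a local equation of $Y'_i$), but the justification is off: the identification $s_l\mapsto 1\otimes\epsilon^{\gamma_l}$ in Theorem~\ref{thm:brionw} is a statement inside the Cox ring $\Rm(Y')$, not a statement that a local equation of $Y'_i$ along $Y_l$ is a $B'$-eigenvector of weight $\gamma_i$. The latter is a genuine (and standard) fact about wonderful varieties — near the closed orbit, the boundary divisors have $B'$-semiinvariant local equations whose weights are precisely the spherical roots — and you should invoke it directly rather than deducing it from the Cox-ring isomorphism. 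The surjectivity argument is fine, modulo the small check that each $S_{ij}$ is $G$-generated by a $\C[C]$-unit multiple of $\phi^*(s_{D_i})$.

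The injectivity argument has a real gap. The freeness of $\Rm(Y')$ over $\C[Z_1,\ldots,Z_s]$ (which does follow from Theorem~\ref{thm:brionw} and the linear independence of the $\gamma_i$) gives you that the source of $\Phi$ is free over $\Rm_G(Y)^{G'}$ and hence embeds into its localization at $W_1\cdots W_n$. But from there you only \emph{assert} that the localized map identifies both sides with (a localization of) $\Rm_G(G/H)$; this is the content that actually needs to be proved, and it is not obvious. Note also two subtleties you pass over: (i) ``inverting the $W_l$ is equivalent to inverting the $Z_i$ on the source'' is only a one-way implication — the map $\C[Z_1,\ldots,Z_s]\to\C[C][W_1,\ldots,W_n]$ need not even be injective (its image is generated by the $s$ monomials $\prod_l W_l^{-\langle u_l,\gamma_i\rangle}$, which may be multiplicatively dependent, and when $n=0$ the map is $Z_i\mapsto 1$); and (ii) when $n=0$ there are no $W_l$ to invert at all, so the localization step is vacuous and injectivity must be established by other means. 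To repair the argument you would need, at a minimum, a direct comparison of the graded pieces of source and target (using the $G'$-isotypic and $\Cl_G(Y)$-gradings), or else compare your Main Theorem description of $\Rm(Y)$ directly with the presentation $\Rm(Y')\otimes_{\C[Z]}\C[W]$ and match the ideals of relations — that second route would also make the ``consistency with the Main Theorem'' you mention at the end into an actual proof rather than a remark.
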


\begin{remark}
As $\Rm(Y) \cong \Rm_G(Y)^C$ (cf.~\cite[proof of Theorem~4.3.2]{brcox}), it
immediately follows that
\begin{align*}
\Rm(Y) \cong \Rm(Y') \otimes_{\C[Z_1, \ldots, Z_s]} \C[W_1, \ldots, W_n]\text{.}
\end{align*}
Furthermore, if $Y$ is horospherical and $P \coloneqq N_G(H)$, we have $Y' \cong G/P$
and $s = 0$, hence $\Rm(Y) \cong \Rm(G/P)[W_1, \ldots, W_n]$ (cf.~Theorem~\ref{th-cox-hor}).
\end{remark}

\begin{example}
Consider $G \coloneqq \SL(d)$ for $d \ge 3$ and $H \coloneqq \SL(d-1)$
as in Example~\ref{ex:comp}. Then $G' = G$, and the spherical closure of $H$
is $H' \coloneqq \GL(d-1)$ as in Example~\ref{ex:w}.
Consider the embedding $G/H \hookrightarrow Y$ corresponding to the fan containing
the one-dimensional cones having primitive lattice generators
$(p_1, q_1), \ldots, (p_n, q_n) \in \Nm$ with respect
to the basis $\{v_1, v_2\}$ as in Example~\ref{ex:comp}. There is
exactly one spherical root $\gamma_1 = v_1^* + v_2^*$.
As $G$ is semisimple, it follows that $\Rm_{G}(Y) = \Rm(Y)$.
We have
\begin{align*}
\Rm(Y') &= \C[X_1, \ldots, X_d, Y_1, \ldots, Y_d]\\
\Rm(Y')^{G} &= \C[Z_1] \\
\Rm(Y)^{G} &= \C[W_1, \ldots, W_n]\text{,}
\end{align*}
the inclusion
$\C[Z_1] \hookrightarrow \C[X_1, \ldots, X_d, Y_1, \ldots, Y_d]$ is given
by $Z_1 \mapsto X_1Y_1 + \ldots + X_dY_d$,
and $\phi^*: \C[Z_1] \to \C[W_1, \ldots, W_s]$ sends
$Z_1 \mapsto W_1^{-p_1-q_1}\cdots W_n^{-p_n-q_n}$.
From
\begin{align*}
\Rm(Y) \cong \C[X_1, \ldots, X_d, Y_1, \ldots, Y_d] \otimes_{\C[Z_1]} \C[W_1, \ldots, W_n]\text{,}
\end{align*}
we obtain
\begin{align*}
\Rm(Y) \cong \C[X_j,Y_j, W_1, \ldots, W_n]_{j=1}^d \rleft/ \middle(\sum_{j=1}^d X_{j}Y_{j} - W_1^{-p_1-q_1}\cdots W_n^{-p_n-q_n}\rright) \text{,}
\end{align*}
which is in agreement with Example~\ref{ex:comp}.
\end{example}

\section*{Acknowledgments}
I am highly grateful to my teacher, Victor Batyrev, for his guidance and support.
I would also like to thank J\"{u}rgen Hausen and Johannes Hofscheier for
some useful discussions. Finally, I thank the referee for carefully
reading the manuscript as well as for valuable comments and suggestions.

\bibliographystyle{amsalpha}
\bibliography{crse}

\end{document}